\documentclass{amsart}

\usepackage{helvet, color}

\usepackage{amscd,amsmath,amsxtra,amsthm,amssymb,stmaryrd,xr,mathrsfs,mathtools}
\usepackage[all]{xy}

\newtheorem{theorem}{Theorem}[section]
\newtheorem{lemma}[theorem]{Lemma}

\newtheorem{proposition}[theorem]{Proposition}
\newtheorem{corollary}[theorem]{Corollary}
\newtheorem{definition}[theorem]{Definition}

\newtheorem{remark}[theorem]{Remark}

\numberwithin{equation}{section}

\newcommand{\Y}{\mathcal{Y}}
\newcommand{\Tr}{\operatorname{Tr}}
\newcommand{\Gal}{\operatorname{Gal}}
\newcommand{\Fil}{\operatorname{Fil}}
\newcommand{\cor}{\operatorname{cor}}
\newcommand{\DD}{\mathbb{D}}
\newcommand{\BB}{\mathbb{B}}
\newcommand{\NN}{\mathbb{N}}
\newcommand{\QQ}{\mathbb{Q}}
\newcommand{\Qp}{\mathbb{Q}_p}
\newcommand{\Zp}{\mathbb{Z}_p}
\newcommand{\ZZ}{\mathbb{Z}}
\newcommand{\HH}{\mathbb{H}}
\renewcommand{\AA}{\mathbb{A}}

\newcommand{\sH}{\mathscr{H}}

\DeclareMathOperator{\ord}{ord}
\newcommand{\cH}{\mathcal{H}}
\newcommand{\cE}{\mathcal{E}}
\newcommand{\e}{s}
\newcommand{\lb}{[\![}
\newcommand{\rb}{]\!]}

\newcommand{\fM}{\mathfrak{M}}

\newcommand{\vp}{\varphi}
\newcommand{\cL}{\mathcal{L}}

\newcommand{\cO}{\mathcal{O}}
\newcommand{\Iw}{\mathrm{Iw}}
\newcommand{\HIw}{H^1_{\mathrm{Iw}}}

\newcommand{\T}{\mathcal{T}}
\newcommand{\fv}{\mathfrak{v}}
\newcommand{\n}{\mathfrak{n}}
\newcommand{\Brig}{\BB_{\mathrm{rig}, \Qp}^+}

\newcommand{\AQp}{\AA_{\Qp}^+}
\DeclareMathOperator{\image}{Im}
\newcommand{\Dcris}{\mathbb{D}_{\rm cris}}
\newcommand{\Mlog}{M_{\log}}
\newcommand{\Iarith}{\mathbb{I}_{\mathrm{arith}}}

\newcommand{\Qpn}{\QQ_{p}(\mu_{p^n})}

\newcommand{\Sel}{\mathrm{Sel}}

\newcommand{\Spec}{\mathrm{Spec}\ }
\newcommand{\len}{\mathrm{len}}
\DeclareMathOperator{\coker}{coker}

\newcommand{\Tw}{\mathrm{Tw}}
\renewcommand{\Col}{\mathrm{Col}}
\newcommand{\uCol}{\underline{\mathrm{Col}}}
\newcommand{\Xloc}{\mathcal{X}_{\rm loc}}
\newcommand{\X}{\mathcal{X}}
\newcommand{\be}{\mathbf{z}}
\renewcommand{\div}{\mathrm{div}}
\newcommand{\corank}{\mathrm{corank}}

\newcommand{\Qpnn}{\Qp(\mu_{p^{n+1}})}
\newcommand{\Tam}{\mathrm{Tam}}

  \DeclareFontFamily{U}{wncy}{}
  \DeclareFontShape{U}{wncy}{m}{n}{<->wncyr10}{}
  \DeclareSymbolFont{mcy}{U}{wncy}{m}{n}
  \DeclareMathSymbol{\sha}{\mathord}{mcy}{"58}

\begin{document}

\title[Asymptotic growth of sha for modular forms]{On the asymptotic growth of Bloch--Kato--Shafarevich--Tate groups of modular forms over cyclotomic extensions}

\author[A.~Lei]{Antonio Lei}
\address[Lei]{D\'epartement de math\'ematiques et de statistique\\
Pavillon Alexandre-Vachon\\
Universit\'e Laval\\
Qu\'ebec, QC, Canada G1V 0A6}
\email{antonio.lei@mat.ulaval.ca}

\author[D.~Loeffler]{David Loeffler}
\address[Loeffler]{Mathematics Institute\\
Zeeman Building, University of Warwick\\
Coventry CV4 7AL, UK}
\email{d.a.loeffler@warwick.ac.uk}

\author[S.L.~Zerbes]{Sarah Livia Zerbes}
\address[Zerbes]{Department of Mathematics \\
University College London\\
Gower Street, London WC1E 6BT, UK}
\email{s.zerbes@ucl.ac.uk}
\thanks{The authors are grateful to acknowledge support from the following grants: NSERC Discovery Grants Program 05710 (Lei); Royal Society University Research Fellowship (Loeffler); Leverhulme Research Fellowship (Zerbes).}

\begin{abstract}
We study the asymptotic behaviour of the Bloch--Kato--Shafarevich--Tate group of a modular form $f$ over the cyclotomic $\Zp$-extension of $\QQ$ under the assumption that $f$ is non-ordinary at $p$. In particular, we give upper bounds of these groups in terms of Iwasawa invariants of Selmer groups defined using $p$-adic Hodge Theory. These bounds have the same form as the formulae of Kobayashi, Kurihara and Sprung for supersingular elliptic curves.
\end{abstract}

\subjclass[2010]{11R18, 11F11, 11R23 (primary); 11F85  (secondary).}
\keywords{Cyclotomic extensions, Shafarevich--Tate groups, Bloch--Kato Selmer groups, modular forms, non-ordinary primes, $p$-adic Hodge theory.}

\maketitle

\section{Introduction}

 Let $p$ be an odd prime and $f$ a normalised new cuspidal modular eigenform of weight $k \ge 2$, and $p$ an odd prime which does not divide the level of $f$. For notational simplicity, we assume in this introduction that all the Fourier coefficients of $f$ lie in $\ZZ$. We let $V_f$ be the \emph{cohomological} $p$-adic Galois representation attached to $f$ (so the determinant of $V_f$ is $\chi^{1-k}$ times a finite-order character). Then $V_f$ has Hodge--Tate weights $\{0, 1-k\}$, where our convention\footnote{This is usual in $p$-adic Hodge theory, but the opposite convention appears to be common in papers on modularity lifting.} is that the Hodge--Tate weight of the cyclotomic character is $1$.  Let $T_f$ be the canonical $G_{\QQ}$-stable $\Zp$-lattice in $V_f$ defined by Kato \cite[8.3]{kato04}.
  
 Let $K_\infty$ be the cyclotomic $\Zp$-extension of $\QQ$ and write $K_n$ for the unique  sub-extension of degree $p^n$. Our aim is to study the asymptotic behaviour of the Bloch--Kato--Shafarevich--Tate groups $\sha(K_n, T_f(j))$ (with $j\in [1,k-1]$), whose definition we shall recall below.

 When $k = 2$, the form $f$ corresponds to an isogeny class of elliptic curves, and we may choose a curve $\cE$ in this isogeny class such that $T_f(1) = T_p(\cE)$, where the latter is the $p$-adic Tate module of $\cE$. In this case it can be shown that the group $\sha(K_n, T_f(1))$ is the quotient of the classical $p$-primary Shafarevich--Tate group $\sha_p(K_n, \cE)$ by its maximal divisible subgroup; hence if the latter group is finite (which is a well-known conjecture), the two groups are equal.
  
 \subsection*{The ordinary case} 
  
  The behaviour of the Selmer and Shafarevich--Tate groups over the cyclotomic extension depends sharply on whether $\cE$ has ordinary or supersingular reduction at $p$. If $\cE$ is ordinary, then the $p$-Selmer group 
  \[ \Sel_p(K_\infty, \cE) = \varinjlim_n \Sel_p(K_n, \cE)\]
  of $A$ over $K_\infty$ is cotorsion over the Iwasawa algebra $\Zp\lb\Gal(K_\infty/\QQ)\rb$, by a theorem of Kato \cite[Theorem 17.4]{kato04}. By Mazur's control theorem \cite{mazur72}, this implies that \textbf{if} the groups $\sha_p(K_n, \cE)$ are finite for all $n$, then we must have
  \[ \len_{\Zp}\sha_p(K_n, \cE)= \mu p^n +\lambda n + O(1),\]
  for some Iwasawa invariants $\mu$ and $\lambda$ associated to $\Sel_p(\cE / K_\infty)$.
 
 \subsection*{The supersingular case} 
  
  The case of supersingular elliptic curves with $a_p(\cE) = 0$ has been studied by Kurihara \cite{kurihara02} and Kobayashi \cite{kobayashi03}. Suppose that $\sha_p(K_n, \cE)$ is finite for all $n$ and write $s_n(\cE)=\len_{\Zp}\sha_p(K_n, \cE)$. They showed that for $n$ sufficiently large,
  \[
   s_n(\cE)-s_{n-1}(\cE)=q_n+\lambda_\pm+\mu_\pm (p^n-p^{n-1})-r_\infty(\cE) ,
  \]
  where $q_n$ is an explicit sum of powers of $p$, $r_\infty(\cE)$ is the rank of $\cE$ over $K_\infty$, $\lambda_\pm$ and $\mu_\pm$ are the Iwasawa invariants of some cotorsion signed Selmer groups, and the sign $\pm$ depends on the parity of $n$.
 
  For supersingular elliptic curves with $a_p(\cE) \ne 0$ (which can only occur when $p = 2$ or $3$), Sprung \cite{sprung12} proved a similar formula:
  \[
   s_n(\cE)-s_{n-1}(\cE)=q_n^\star+\lambda_\star+\mu_\star (p^n-p^{n-1})-r_\infty(\cE) ,
  \]
  for $n\gg0$, where $q_n^\star$ is again an explicit sum of powers of $p$, $\star\in\{\#,\flat\}$, $\lambda_\star$ and $\mu_\star$ are Iwasawa invariants of some cotorsion Selmer groups defined in \cite{sprung09} and the choice of $\star$ depends on the ``modesty algorithm''. An analytic version of this formula has been generalised to arbitrary weight 2 modular forms in \cite{sprung15}.

 \subsection*{Higher weights} 
 
  The main result of the present article is that a similar formula for modular forms of higher weight would give us an upper bound on the growth of the Bloch--Kato--Shafarevich--Tate groups. Suppose that $\ord_p(a_p(f))>\frac{k-1}{2p}$ and $3\le k\le p$, where $a_p(f)$ is the $p$-th Fourier coefficient of the modular form $f$. We shall see below that the Selmer coranks
  \[ r_n(f) = \corank_{\Zp} \Sel(K_n, T_f(j)) \]
  stabilise for $n \gg 0$, and we define $r_\infty(f)$ to be the limiting value (see Proposition~\ref{prop:stabilize}). We define
  \[ s_n(f)= \len_{\Zp}\sha_p(K_n, T_f(j))\]
  (which is finite by definition). We prove the inequality (see Theorem~\ref{thm:final} for the precise statement)
  \[
   s_n(f)-s_{n-1}(f)\le q_n^\star+\lambda_\star+\mu_\star (p^n-p^{n-1})+\kappa-r_\infty(f),
  \]
  for $n\gg0$, where $q_n^\star$ is once again a sum of powers of $p$ that depends on $k$ and the parity of $n$, $\lambda_\star$ and $\mu_\star$  are the Iwasawa invariants of the Selmer groups defined in \cite{leiloefflerzerbes10} for some choice of basis of the Wach module of $T_f$, $\kappa$ is some integer that depends on the image of some Coleman maps that we shall review in \S\ref{S:wach} of this article and the choice of $\star$ is given by an explicit algorithm (similar to the ``modesty algorithm'' of Sprung).
  
  The fact that we have an inequality is a result of the growth of the logarithmic matrix contributed from the twists of $T_f(i)$ for $i\ne j$. In the appendix to this paper, we relate the defect of this inequality to the Tamagamwa numbers of $T_f(j)$ using the method developed by Perrin-Riou in \cite{perrinriou03}.
  
  \subsection*{Acknowledgement}
  
   The authors are grateful to the anonymous referee for many useful comments and suggestions, which improved the paper substantially.

\section{Background from $p$-adic Hodge theory}
 
 We recall the necessary notation and definitions from $p$-adic analysis and $p$-adic Hodge theory. For more details see \cite[\S 1.3]{leiloefflerzerbes11}. We fix (for the duration of this article) a finite extension $E / \Qp$ with ring of integers $\cO$, which will be the coefficient field for all the representations we shall consider.
 
 \subsection{Iwasawa algebras and distribution algebras}
 
  Let $\Gamma=\Gal(\QQ(\mu_{p^\infty})/\QQ)$. This group is isomorphic to a direct product $\Delta\times\Gamma_1$, where $\Delta$ is a finite group of order $p-1$ and $\Gamma_1=\Gal(\QQ(\mu_{p^\infty}) / \QQ(\mu_p))$. We choose a topological generator $\gamma$ of $\Gamma_1$, which determines an isomorphism $\Gamma_1 \cong \Zp$. We also fix a finite extension $E$ of $\Qp$ with ring of integers $\cO$ which will be our field of coefficients (i.e.~we will consider representations of Galois groups on $E$-vector spaces).
 
  We write $\Lambda=\cO\lb \Gamma\rb$, the Iwasawa algebra of $\Gamma$. The subalgebra $\cO\lb\Gamma_1\rb$ can be identified with the formal power series ring $\cO\lb X \rb$, via the isomorphism sending $\gamma_1$ to $1 + X$; this extends to an isomorphism
  \begin{equation}\label{eq:lambda}
   \Lambda=\cO[\Delta]\lb  X\rb.
  \end{equation}
  For a character $\eta$ of $\Delta$ and a $\Lambda$-module $M$, $M^\eta$ denotes its $\eta$-isotypic component, which is regarded as an $\cO\lb X\rb$-module. For $n\ge1$, we write $\Gamma_n$ for the subgroup $\Gal(\QQ(\mu_{p^\infty})/\QQ(\mu_{p^n}))$ and $\Lambda_n=\cO[\Gamma / \Gamma_n]$. Note that
  \[
   \Lambda_n=\cO[\Delta]\lb X\rb/(\omega_{n-1}(X)),
  \]
  where $\omega_{n-1}(X)$ denotes the polynomial $(1+X)^{p^{n-1}}-1$.
 
  We may consider $\Lambda$ as a subring of the ring $\cH$ of locally analytic $E$-valued distributions on $\Gamma$. The isomorphism \eqref{eq:lambda} extends to an identification between $\cH$ and the subring of power series $F \in E[\Delta]\lb X \rb$ which converge on the open unit disc $|X| < 1$.
 
 \subsection{Power series rings}
 
  Let $\AQp = \cO\lb \pi\rb$, where $\pi$ is a formal variable. We equip this ring with a $\cO$-linear \emph{Frobenius endomorphism} $\varphi$, defined by $\pi\mapsto (1+\pi)^p-1$, and with an $\cO$-linear action of $\Gamma$ defined by $\pi\mapsto(1+\pi)^{\chi(\sigma)}-1$ for $\sigma\in\Gamma$, where $\chi$ denotes the $p$-adic cyclotomic character.
  
  The Frobenius $\varphi$ has a left inverse $\psi$, satisfying
  \[ (\varphi \circ \psi)(f)(\pi) = \tfrac{1}{p} \sum_{\zeta: \zeta^p = 1} f\left( \zeta(1 + \pi) - 1 \right).
  \]
  The map $\psi$ is not a morphism of rings, but it is $\cO$-linear, and commutes with the action of $\Gamma$.
  
  We write $\BB^+_{\Qp} = \AQp[1/p] \subset E\lb \pi \rb$, and 
  \[ \Brig=\left\{F(\pi)\in E\lb \pi\rb: F \text{ converges on the open unit disc}\right\},\]
  so there are natural inclusions
  \[ \AQp \hookrightarrow \BB^+_{\Qp} \hookrightarrow \Brig.\]  
  The actions of $\vp$, $\psi$, and $\Gamma$ extend to these larger rings (via the same formulae as before). We shall write $q=\vp(\pi)/\pi\in\AQp$, and $t=\log(1+\pi)\in\Brig$.
 
 \subsection{The Mellin transform}
  \label{sect:mellin}
  
  The action of $\Gamma$ on $1 + \pi \in (\AQp)^{\psi = 0}$ extends to an isomorphism of $\Lambda$-modules
  \begin{align*}
   \fM:\Lambda {\stackrel{\cong}{\longrightarrow}}& (\AQp)^{\psi=0}\\
   1\longmapsto& 1+\pi,
  \end{align*}
  called the \emph{Mellin transform}. This can be further extended to an isomorphism of $\cH$-modules
  \[ \cH {\stackrel{\cong}{\longrightarrow}} (\Brig)^{\psi = 0}\]
  which we denote by the same symbol.
 
  \begin{theorem}\label{thm:mellin}
   For every $n \ge 1$, the Mellin transform induces an isomorphism of $\Lambda$-modules
   \[
    \Lambda_n\cong (\AQp)^{\psi=0} /\vp^n(\pi) (\AQp)^{\psi=0}.
   \] 
  \end{theorem}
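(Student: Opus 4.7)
The plan is to leverage the Mellin transform isomorphism $\fM \colon \Lambda \xrightarrow{\sim} (\AQp)^{\psi=0}$ just recalled, and to show that $\fM$ sends the augmentation ideal $I_n := \ker(\Lambda \twoheadrightarrow \Lambda_n)$ onto the submodule $\varphi^n(\pi)(\AQp)^{\psi=0}$. Under the identification $\Lambda = \cO[\Delta]\lb X \rb$ with $\gamma_1 = 1+X$, we have $I_n = \omega_{n-1}(X)\Lambda$ because $\gamma_1^{p^{n-1}}$ topologically generates $\Gamma_n$.

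The first step, which I expect to be the crux, is the inclusion $\fM(I_n) \subseteq \varphi^n(\pi)(\AQp)^{\psi=0}$. First I would observe that the target is genuinely a $\Lambda$-submodule: since $\gamma$ commutes with $\varphi$, one has $\gamma(\varphi^n(\pi)) = \varphi^n(\pi)\cdot u$ with $u \in \AQp^\times$, so $\varphi^n(\pi)\AQp$ is $\Gamma$-stable; the projection formula $\psi(\varphi(a)b) = a\psi(b)$ together with $\AQp$ being a domain then yields $\varphi^n(\pi)\AQp \cap (\AQp)^{\psi=0} = \varphi^n(\pi)(\AQp)^{\psi=0}$. Since $I_n$ is generated as an ideal by $\{\tau - 1 : \tau \in \Gamma_n\}$, it suffices to test the inclusion on these generators. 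For $\tau \in \Gamma_n$ we have $\chi(\tau) - 1 = p^n c$ for some $c \in \Zp$, and the key computation is
\[ \fM(\tau - 1) = (1+\pi)^{\chi(\tau)} - (1+\pi) = (1+\pi)\bigl[(1+\varphi^n(\pi))^c - 1\bigr], \]
whose bracketed factor manifestly lies in $\varphi^n(\pi)\AQp$ by binomial expansion.

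This inclusion produces a $\Lambda$-linear surjection $\bar{\fM} \colon \Lambda_n \twoheadrightarrow (\AQp)^{\psi=0}/\varphi^n(\pi)(\AQp)^{\psi=0}$, and I would upgrade it to an isomorphism by an $\cO$-rank comparison. Because $\omega_{n-1}(X)$ is a distinguished polynomial of degree $p^{n-1}$, $\Lambda_n$ is $\cO$-free of rank $(p-1)p^{n-1}$. For the target, the standard decomposition $(\AQp)^{\psi=0} = \bigoplus_{i=1}^{p-1}(1+\pi)^i \varphi(\AQp)$ combined with the identity $\varphi^n(\pi) = \varphi(\varphi^{n-1}(\pi))$ gives
\[ (\AQp)^{\psi=0}/\varphi^n(\pi)(\AQp)^{\psi=0} \;\cong\; \bigoplus_{i=1}^{p-1}(1+\pi)^i \varphi\bigl(\AQp/\varphi^{n-1}(\pi)\AQp\bigr), \]
which is also $\cO$-free of rank $(p-1)p^{n-1}$ by the same Weierstrass-type argument applied to the distinguished polynomial $\varphi^{n-1}(\pi)$. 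A surjection between two $\cO$-free modules of equal finite rank is automatically an isomorphism, so $\bar{\fM}$ gives the desired identification. The only real obstacle is the divisibility $(1+\varphi^n(\pi))^c - 1 \in \varphi^n(\pi)\AQp$ behind the first step; once that is recognised, the rank count is entirely mechanical.
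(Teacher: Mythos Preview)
Your proof is correct and follows the same overall architecture as the paper's---establish the inclusion $\fM(\omega_{n-1}\Lambda) \subseteq \varphi^n(\pi)(\AQp)^{\psi=0}$, then conclude by a rank count---but you handle the key inclusion differently. The paper invokes an external result (\cite[Theorem 5.4]{leiloefflerzerbes10}) to get $\fM(\omega_{n-1}\Lambda) \subseteq \varphi^n(\pi)(\Brig)^{\psi=0}$ in the larger ring $\Brig$, and then descends to $\AQp$ by noting that $\varphi^n(\pi)$ is a monic polynomial, so divisibility in $\Brig$ implies divisibility in $\AQp$. Your argument instead works directly in $\AQp$: you test on the generators $\tau-1$ of $I_n$ and use the elementary identity $(1+\pi)^{p^n} = 1+\varphi^n(\pi)$ to exhibit the divisibility by hand, together with the clean observation $\varphi^n(\pi)\AQp \cap (\AQp)^{\psi=0} = \varphi^n(\pi)(\AQp)^{\psi=0}$. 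Your route is more self-contained and avoids both the detour through $\Brig$ and the citation; the paper's route is shorter on the page but hides the mechanism in the reference. For the rank count, you also give an explicit decomposition via $(\AQp)^{\psi=0} = \bigoplus_{i=1}^{p-1}(1+\pi)^i\varphi(\AQp)$, whereas the paper simply asserts the rank (and in fact writes $(p-1)p^n$, which appears to be a slip for $(p-1)p^{n-1}$; your value is the correct one).
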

 
  \begin{proof}
   If $\mu \in \omega_{n-1}(X) \Lambda$, then $\fM(\mu) \in \varphi^n(\pi) (\Brig)^{\psi = 0}$, by \cite[Theorem 5.4]{leiloefflerzerbes10}. However, $\varphi^n(\pi)$ is a monic polynomial in $\pi$, so if an element of $\AQp$ is divisible by $\varphi^n(\pi)$ in $\Brig$, it is divisible by the same element in $\AQp$. Hence the Mellin transform induces a map $\Lambda_n \to (\AQp)^{\psi=0} /\vp^n(\pi) (\AQp)^{\psi=0}$; and this map is surjective, because the Mellin transform itself is surjective. Since both sides are free $\cO$-modules of the same rank, namely $(p-1)p^n$, it follows that the map must in fact be an isomorphism.
  \end{proof}
 
   We write $\partial$ for the differential operator $(1+\pi)\frac{\mathrm{d}}{\mathrm{d}\pi}$ on $\Brig$, and $\Tw$ for the ring automorphism of $\cH$ defined by $\sigma \mapsto \chi(\sigma)\sigma$ for $\sigma\in \Gamma$. Then one has the compatibility relation
   \[ \fM\circ\Tw = \partial\circ\fM. \]
   
  Let $u=\chi(\gamma)$ be the image of our topological generator $\gamma$ under the cyclotomic character, so that $\Tw$ maps $X$ to $u(1 + X) - 1$. If $m\ge 0$ is an integer, we define $\omega_{n,m}(X)=\omega_n(u^{-m}(1+X)-1)$ and $\tilde{\omega}_{n,m}=\prod_{i=0}^{m}\omega_{n,i}$.
  By exactly the same argument as Theorem~\ref{thm:mellin}, this gives the following isomorphism of $\Lambda$-modules
  \begin{equation}
   \label{eq:mellin}
   \Lambda_{n,m}\coloneqq\Lambda/\tilde{\omega}_{n-1,m}\Lambda\cong 
   (\AQp)^{\psi=0}/\vp^n(\pi^{m+1})(\AQp)^{\psi=0}.
  \end{equation}
  
  We will need below the following technical result, regarding the interaction between Mellin transforms and the Iwasawa invariants of power series. We recall the \emph{Weierstrass preparation theorem}, which states that any $F \in \cO\lb X \rb$ can be factorized uniquely as
  \[ F(X) = \varpi^{\mu(F)} \cdot (X^{\lambda(F)} + \varpi G(X)) \cdot u(X),\]
  where $\varpi$ is a uniformizer of $\cO$, $\lambda(F)$ and $\mu(F)$ are non-negative integers, $G \in \cO[X]$ is a polynomial of degree $< \lambda(F)$, and $u \in \cO\lb X \rb^\times$. The quantities $\lambda(F)$ and $\mu(F)$ are called the \emph{Iwasawa invariants} of $F$.
  
  It is clear that, for $x \in \cO_{\mathbf{C}_p}$ with $\ord_p(x) > 0$, we have the lower bound
  \begin{equation}
   \label{eq:newtonpoly}
   \ord_p F(x) \ge \min\left(\tfrac{\mu + 1}{e}, \tfrac{\mu}{e} + \lambda \ord_p(x)\right),
  \end{equation}
  where $e = 1/\ord_p(\varpi)$ is the absolute ramification degree of $F$. Moreover, if $\ord_p(x)$ is sufficiently small (depending on $F$), this lower bound is an equality (it suffices to take $\ord_p(x) < 1/(e\lambda)$).
  
  \begin{proposition}
   \label{prop:newtonpoly}
   Let $f \in \AQp$, and let $g$ be the unique element of $\Lambda(\Gamma_1)$ such that $\fM(g) = (1 + \pi) \vp(f)$. Then the $\lambda$- and $\mu$-invariants of $f$ (as an element of $\cO\lb \pi \rb$) coincide with those of $g$ (as an element of $\cO\lb X \rb$).
  \end{proposition}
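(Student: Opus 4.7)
The plan is to reduce to a computation in characteristic $p$. First, the $\mu$-invariants agree for trivial reasons: since $\vp$ is an injective $\cO$-algebra endomorphism with $\vp(\varpi) = \varpi$, and $(1+\pi)$ is a unit in $\AQp$, the $\varpi$-adic order of $(1+\pi)\vp(f)$ equals that of $f$; and since $\fM$ is an $\cO$-module isomorphism it preserves $\varpi$-adic orders as well. Hence $\mu(f) = \mu(g)$, and after dividing by $\varpi^{\mu(f)}$ I may assume $\mu(f) = \mu(g) = 0$. Let $\bar{f} \in k\lb \pi \rb$ and $\bar{g} \in k\lb X \rb$ denote the mod-$\varpi$ reductions (where $k = \cO/\varpi$), so $\lambda(f) = \ord_\pi \bar{f}$ and $\lambda(g) = \ord_X \bar{g}$.

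In characteristic $p$ one has $\vp(\pi) = (1+\pi)^p - 1 = \pi^p$, so $(1+\pi)\vp(\bar{f}) = (1+\pi)\bar{f}(\pi^p)$ has $\pi$-order exactly $p\lambda(f)$. The proposition therefore reduces to showing $\ord_\pi \bar{\fM}(\bar{g}) = p\cdot\ord_X \bar{g}$. Writing $\bar{g} = X^{\lambda(g)} u$ with $u \in k\lb X \rb^\times$, and noting that $\bar{\fM}$ is $k\lb X \rb$-linear with $X$ acting on $k\lb \pi \rb$ as $D := \gamma - 1$, we obtain $\bar{\fM}(\bar{g}) = u(D)\bigl(D^{\lambda(g)}(1+\pi)\bigr)$. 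Since $D$ strictly raises $\pi$-order on nonconstants, the unit $u(D)$ does not change the $\pi$-order of its argument, and the problem reduces to the following key claim: \emph{in characteristic $p$, $D^n(1+\pi)$ has $\pi$-order exactly $np$ with nonzero leading coefficient.}

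The key claim is the main obstacle. To establish it, I would exploit the fact that $\gamma$ commutes with Frobenius, so that the subspace $(1+\pi)\cdot k\lb \pi^p \rb$ of $k\lb \pi \rb$ is $\gamma$-stable. Writing $D^n(1+\pi) = (1+\pi)\tilde{G}_n(\pi^p)$ with $\tilde{G}_n \in k\lb T \rb$ (where $T = \pi^p$), the characteristic-$p$ identity $(1+\pi)^u = (1+\pi)(1+\pi^p)^v$ (with $u - 1 = pv$, $v \in \Zp^\times$) translates the recursion for $D^n(1+\pi)$ into
\[
\tilde{G}_{n+1}(T) = D_T\tilde{G}_n(T) + y(T)\cdot\gamma_T\tilde{G}_n(T) \qquad \text{in } k\lb T \rb,
\]
where $\gamma_T\colon T \mapsto (1+T)^u - 1$ has the same form as $\gamma$ on $\pi$, $D_T = \gamma_T - 1$, and $y(T) = (1+T)^v - 1 = vT + O(T^2)$. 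Since $\gamma_T(T) - T$ has $T$-order $p$, the operator $D_T$ raises $T$-order by at least $p-1$, whereas multiplication by $y$ raises $T$-order by exactly $1$ with leading coefficient $v$. An induction on $n$, starting from $\tilde{G}_0 = 1$, then yields $\tilde{G}_n(T) = v^n T^n + O(T^{n+1})$ (the hypothesis $p \ge 3$ is used to ensure the $D_T$-contribution has strictly higher order than the $y$-contribution), whence $\ord_\pi D^n(1+\pi) = np$ with leading coefficient $v^n \ne 0$.
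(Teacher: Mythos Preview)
Your proof is correct and takes a genuinely different route from the paper's. The paper argues analytically: it quotes Proposition~7.2 of \cite{loefflerzerbes10}, which says that the ``sup-valuation'' functions $v_s(f) = \inf\{\ord_p f(x) : \ord_p(x) \ge s\}$ satisfy $v_s(f) = v_s(g)$ for all $0 < s < 1$, and then reads off the Iwasawa invariants from the linear behaviour $v_s = \tfrac{\mu}{e} + \lambda s$ for small $s$. Your argument, by contrast, is purely algebraic: after the easy reduction to $\mu = 0$ you pass to characteristic $p$, where $\vp$ becomes $\pi \mapsto \pi^p$, and you reduce the question to the concrete claim $\ord_\pi (\gamma-1)^n(1+\pi) = np$ in $k\lb\pi\rb$, which you then prove by an explicit recursion exploiting the identity $(1+\pi)^u = (1+\pi)(1+\pi^p)^v$ modulo $p$. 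The paper's approach is shorter on the page because it outsources the work to an external reference and is stated in a generality (over $\Brig$ and $\cH$) that goes beyond what is needed here; your approach is entirely self-contained and elementary, at the cost of a longer explicit computation, and makes transparent exactly where the hypothesis $p$ odd enters (to ensure the $D_T$-term in your recursion has strictly higher $T$-order than the $y$-term).
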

  
  \begin{proof}
   This is a consequence of Proposition 7.2 of \cite{loefflerzerbes10}, which shows that for any $f \in \Brig$ and $g \in \cH$ such that $\fM(g) = (1 + \pi) \vp(f)$, and any real $s$ with $0 < s < 1$, we have $v_s(f) = v_s(g)$, where 
   \[ v_s(f) \coloneqq \inf \{ \ord_p f(x) : \ord_p(x) \ge s\}.\]
   When $f \in \cO\lb X \rb$ and $s$ is sufficiently small, $v_s(f)$ is determined by the Iwasawa invariants of $f$: from the inequality \eqref{eq:newtonpoly} and the discussion following, we have $v_s(f) = \tfrac{1}{e} \mu(f) + \lambda(f) s$ for any $s < \tfrac{1}{e \lambda(f)}$. So the cited proposition implies the equalities $\lambda(f) = \lambda(g)$ and $\mu(f) = \mu(g)$.
  \end{proof}

 \subsection{Crystalline representations and Wach modules}
 
  Fontaine has defined a certain topological $\Qp$-algebra $\BB_{\mathrm{cris}}$, equipped with an action of $G_{\Qp}$, a filtration $\Fil^\bullet$, and a Frobenius endomorphism $\varphi$. 
  
  For any $p$-adic representation $V$ of $G_{\Qp}$, we define the \emph{crystalline Dieudonn\'e module} of $V$ by
  \[ \Dcris(V)  = \left( V \otimes_{\Qp} \BB_{\mathrm{cris}} \right)^{G_{\Qp}}. \]
  The space $\Dcris(V)$ inherits a filtration and a Frobenius endomorphism from those of $\BB_{\mathrm{cris}}$. It is known that $\dim_{\Qp} \Dcris(V) \le \dim_{\Qp} V$, and we say $V$ is \emph{crystalline} if equality holds. If in fact $V$ is an $E$-linear representation, then $\Dcris(V)$ is naturally an $E$-vector space (and its filtration and Frobenius are $E$-linear).

  \begin{definition}
   Let $a \le b$ be integers. A \emph{Wach module} over $\BB^+_{\Qp}$ with weights in $[a, b]$ is a finite free $\BB^+_{\Qp}$-module $N$, equipped with an action of $\Gamma$ and a Frobenius
   \[ \varphi: N[1/\pi] \to N[1/\varphi(\pi)] \]
   compatible with those of $\BB^+_{\Qp}$, satisfying the following conditions:
   \begin{itemize}
    \item $\Gamma$ acts trivially on $N / \pi N$,
    \item $\varphi(\pi^b N) \subseteq \pi^b N$,
    \item if $\varphi^*(\pi^b N)$ is the $\BB^+_{\Qp}$-submodule of $\pi^b N$ generated by $\varphi(\pi^b N)$, then the quotient $\pi^b N / \varphi^*(\pi^b N)$ is killed by $q^{b-a}$.
   \end{itemize}
  \end{definition}
  
  Cf.~\cite[Definition III.4.1]{berger04}. In \emph{op.cit.}~it is shown how to attach to every crystalline $E$-linear representation $V$ of $G_{\Qp}$ a Wach module $\NN(V)$ over $\BB^+_{\Qp}$, in such a way that there is a canonical isomorphism
  \[ \NN(V) \otimes_{\BB^+_{\Qp}} \Brig[1/t] \cong \Dcris(V) \otimes_{E} \Brig[1/t].\]
  Moreover, the definition of Wach modules also makes sense integrally, i.e.~over $\AA^+_{\Qp}$; and we may associate to each $\cO$-lattice $T$ in $V$ that is stable under $G_{\Qp}$ an integral Wach module $\NN(T) \subset \NN(V)$ (Lemme II.1.3 of \emph{op.cit.}).
  
  \begin{definition}
   We say $V$ satisfies the \emph{Fontaine--Laffaille condition} if it is crystalline and has Hodge--Tate weights in $[a, a + (p-1)]$ for some $a \in \ZZ$.
  \end{definition}
  
  If $V$ satisfies the Fontaine--Laffaille condition, and $V$ is irreducible of dimension $\ge 2$, then one has a particularly convenient parametrisation of $G_{\Qp}$-stable lattices in $V$. We say a $\cO$-lattice $M \subset \Dcris(V)$ is a \emph{strongly divisible lattice} if the equality
  \[ \varphi\left( M \cap \Fil^i \Dcris(V) \right) \subset p^i M \]
  holds for all $i \in \ZZ$. Then there is a bijection $T \mapsto \Dcris(T)$ between $G_{\Qp}$-stable lattices in $V$, and strongly divisible lattices in $\Dcris(V)$, given by defining $\Dcris(T)$ to be the image of $\NN(T)$ in $\NN(V) / \pi \NN(V) \cong \Dcris(V)$; cf.~\cite[Propositions V.2.1 \& V.2.3]{berger04}.
  
  We shall need below the following technical result. 
  
  \begin{theorem}\label{thm:conditionsWachbasis}
   Let $T$ be a $G_{\Qp}$-stable $\cO$-lattice in a crystalline $E$-linear representation $V$. Then $\left(\varphi^* \NN(T)\right)^{\psi = 0}$ is a free $\Lambda$-module of rank $d = \dim_E V$. Moreover, if $\{ n_1, \dots, n_d\} $ is an $\AA^+_{\Qp}$-basis of $\NN(T)$ which satisfies the condition
   \[ (\gamma - 1) n_i \in \pi^2 \NN(T) \]
   for all $i$, then $\{ (1 + \pi) \vp(n_i) : i = 1, \dots, d\}$ is a $\Lambda$-module basis of $\left(\varphi^* \NN(T)\right)^{\psi = 0}$.
  \end{theorem}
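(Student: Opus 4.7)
Set $M = \varphi^*\NN(T)$ and $e_i = (1+\pi)\varphi(n_i)$. The projection formula $\psi(\varphi(g)h) = g\psi(h)$, combined with $\psi(1+\pi)=0$ (which follows from $\varphi\psi(1+\pi) = \tfrac{1}{p}\sum_{\zeta^p=1}\zeta(1+\pi) = 0$), shows each $e_i$ lies in $M^{\psi=0}$. Moreover, since $\{\varphi(n_i)\}$ is an $\AQp$-basis of $M$ and $\psi$ acts coordinate-wise with respect to this basis,
\[
M^{\psi=0} = \bigoplus_{i=1}^d (\AQp)^{\psi=0}\varphi(n_i).
\]
Combined with the Mellin isomorphism $\fM: \Lambda \xrightarrow{\sim}(\AQp)^{\psi=0}$, this already yields a natural $\cO$-module bijection $D: \Lambda^d \to M^{\psi=0}$, $(\lambda_i)\mapsto \sum_i \fM(\lambda_i)\varphi(n_i)$, which is however \emph{not} $\Lambda$-linear, since $\Gamma$ acts non-trivially on the $\varphi(n_i)$.

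The $\Lambda$-linear map we really want is $\Psi: \Lambda^d \to M^{\psi=0}$, $(\lambda_i)\mapsto \sum_i \lambda_i\cdot e_i$, with $\lambda_i$ acting through $\Gamma$. The central computation is that $\Psi$ and $D$ are close: for each $\sigma\in\Gamma$,
\[
\sigma(e_i) = \sigma(1+\pi)\varphi(n_i) + \sigma(1+\pi)\varphi\bigl((\sigma-1)n_i\bigr),
\]
and the hypothesis $(\gamma-1)n_i\in\pi^2\NN(T)$ (together with the $\Gamma$-stability of $\pi^2\NN(T)$) gives $(\sigma-1)n_i\in\pi^2\NN(T)$ for all $\sigma\in\Gamma_1$, so the error lies in $\varphi(\pi)^2 M$. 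The wild part $\Delta$ requires a separate treatment via the isotypic decomposition $\Lambda=\prod_\eta \cO\lb\Gamma_1\rb$, because the Wach-module axioms only supply $(\delta-1)n_i\in\pi\NN(T)$ for $\delta\in\Delta$. Integrating the displayed formula against a distribution and summing yields the congruence $\Psi \equiv D$ modulo $\varphi(\pi)M^{\psi=0}$, which, by Theorem~\ref{thm:mellin} at $n=1$, corresponds under $D^{-1}$ to the ideal $X\Lambda\subset\Lambda$.

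It then follows that $D^{-1}\Psi = \mathrm{id}_{\Lambda^d}+\epsilon$ is an $\cO$-linear endomorphism of $\Lambda^d$ with $\epsilon(\Lambda^d)\subseteq X\Lambda^d$. Since $\Lambda\cong\prod_\eta \cO\lb X\rb$ is semilocal with Jacobson radical containing $X$, and $\Lambda^d$ is $(\varpi,X)$-adically complete, one expects to invert $\mathrm{id}+\epsilon$ by a Neumann-type series, whence $\Psi$ will be an $\cO$-linear (and therefore $\Lambda$-linear) isomorphism; this is precisely both claims of the theorem. \emph{The main obstacle} is justifying the convergence of the Neumann iteration: because $\epsilon$ is only $\cO$-linear, the inclusion $\epsilon(\Lambda^d)\subseteq X\Lambda^d$ does not immediately propagate to $\epsilon^k(\Lambda^d)\subseteq X^k\Lambda^d$. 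To handle this, one refines the comparison of the second paragraph into a whole sequence of congruences modulo $\varphi^n(\pi)(\AQp)^{\psi=0}$ for $n\ge 1$, using Theorem~\ref{thm:mellin} at each level to translate into the $\omega_{n-1}\Lambda$-adic filtration on the $\Lambda$-side, and verifies that the $\Gamma$-twisting of $\varphi(n_i)$ preserves these filtrations well enough for the iterates to tend to zero.
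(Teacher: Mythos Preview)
The paper's own proof is simply a citation to \cite{leiloefflerzerbes10} (Theorem~3.5 and Lemma~3.9 there), so your proposal should be compared against that argument. Your strategy---compare the $\Lambda$-linear map $\Psi$ with the coordinate-wise Mellin isomorphism $D$, and show the discrepancy $\epsilon = D^{-1}\Psi - \mathrm{id}$ is topologically nilpotent using the filtration by $\varphi^n(\pi)M^{\psi=0}$ and Theorem~\ref{thm:mellin}---is exactly the strategy of the cited proof.

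Two small points. First, a terminological slip: $\Delta$ has order $p-1$, so it is the \emph{tame} part, not the wild part. Since $|\Delta|\in\cO^\times$, the $\Delta$-errors (which are only $O(\varphi(\pi))$ by the Wach-module axiom) are harmless for convergence, which is governed entirely by the $\Gamma_1$-direction $X=\gamma-1$; you do not really need a separate isotypic argument, only the observation that $\epsilon(\Lambda^d)\subset X\Lambda^d$ already follows from the Wach axiom alone.

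Second, the ``main obstacle'' you flag is precisely why the $\pi^2$ hypothesis is there. With only $(\gamma-1)n_i\in\pi\NN(T)$, one gets $\epsilon(\Lambda^d)\subset X\Lambda^d$ but cannot propagate. The refinement you need is: writing $(\gamma-1)D(\lambda)-D(X\lambda)=\sum_i\gamma\fM(\lambda_i)\,\varphi((\gamma-1)n_i)$, the $\pi^2$ hypothesis puts this in $\varphi(\pi)^2 M^{\psi=0}$, and combined with the estimate $(\gamma-1)\varphi(\pi)\in p\varphi(\pi)\AQp+\varphi(\pi)^2\AQp$ one obtains $\epsilon(X^k\Lambda^d)\subset (p,X)X^k\Lambda^d$ for all $k$, so the Neumann series converges $(\varpi,X)$-adically. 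Your final paragraph correctly identifies this route; filling it in is routine.
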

  
  \begin{proof}
   This is shown in the course of the proof of Theorem 3.5 of \cite{leiloefflerzerbes10}. The condition on the basis modulo $\pi^2$ is the conclusion of Lemma~3.9 in \textit{op. cit.}
  \end{proof}
  
 \subsection{Iwasawa cohomology and the Fontaine isomorphism}
 
  If $V$ is an $E$-linear $p$-adic representation of $G_{\Qp}$, and $T \subset V$ is a $G_{\Qp}$-stable $\cO_E$-lattice, then we define \emph{Iwasawa cohomology} groups by
  \[ H^i_{\Iw}(\Qp(\mu_{p^\infty}), T) = \varprojlim_n H^1(\Qp(\mu_{p^n}), T) \]
  (where the inverse limit is with respect to the corestriction maps). These groups are finitely-generated $\Lambda$-modules, zero unless $i \in \{1, 2\}$. If $H^0(\Qp(\mu_{p^\infty}), T/pT) = 0$, which is the case in our applications below, then $H^2_{\Iw}$ is zero, and $H^1_{\Iw}$ is a free $\Lambda$-module of rank equal to the $\cO$-rank of $T$.
  
  The following theorem is the starting-point for our study of Iwasawa cohomology:
  
  \begin{theorem}[Fontaine--Berger]\label{thm:fontaineiso}
   If $V$ is crystalline with all Hodge--Tate weights $\ge 0$, and $V$ has no non-zero quotient on which $G_{\Qp}$ acts trivially, then there is a canonical $\Lambda$-module isomorphism
   \[ h^1_{T}: \NN(T)^{\psi = 1} \to H^1_{\Iw}(\Qp(\mu_{p^\infty}), T).\]
  \end{theorem}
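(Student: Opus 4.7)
The plan is to follow the standard two-step route: first invoke Fontaine's classical description of Iwasawa cohomology for arbitrary $p$-adic representations, and then specialise to the Wach module using the crystalline structure. To any $p$-adic representation $T$, Fontaine attaches a $(\vp,\Gamma)$-module $\DD(T)$ over the $p$-adic completion $\AA_{\Qp}$ of $\AQp[\pi^{-1}]$; when $T$ is crystalline, Berger's construction places $\NN(T)$ inside $\DD(T)$ as a sub-$\AQp$-module stable under $\vp$, $\psi$, and $\Gamma$. The map $h^1_T$ will be obtained as the composition of the inclusion $\NN(T)^{\psi=1}\hookrightarrow \DD(T)^{\psi=1}$ with the canonical isomorphism $\DD(T)^{\psi=1}\cong \HIw(\Qp(\mu_{p^\infty}),T)$; the task is then to check that both maps are $\Lambda$-linear bijections under our hypotheses.

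For the classical isomorphism, I would set up the Herr complex $[\DD(T)\xrightarrow{\psi-1}\DD(T)]$, which computes $R\Gamma_{\Iw}(\Qp(\mu_{p^\infty}),T)$ up to a degree shift. Its cokernel is $H^2_{\Iw}$, which vanishes by Tate local duality as soon as $H^0(\Qp(\mu_{p^\infty}),V^\vee(1))=0$; the hypothesis that $V$ has no non-zero quotient with trivial $G_{\Qp}$-action supplies exactly this vanishing. Consequently $\psi-1$ is surjective on $\DD(T)$ and its kernel is canonically identified with $H^1_{\Iw}$ as a $\Lambda$-module. The compatibility with the $\Gamma$-action is built into the construction of the Herr complex.

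The substantive step, and the main obstacle, is the identification $\NN(T)^{\psi=1}=\DD(T)^{\psi=1}$. The inclusion $\subseteq$ is immediate. For the reverse, given $x\in\DD(T)^{\psi=1}$, I would write $x = \vp\psi(x) + y_0$ with $y_0 \in \DD(T)^{\psi=0}$ and iterate to obtain $x = \vp^n(x) + \sum_{k=0}^{n-1}\vp^k(y_k)$ with $y_k \in \DD(T)^{\psi=0}$. Since $\vp^n(\pi)$ becomes arbitrarily $\pi$-adically small, the term $\vp^n(x)$ tends to $0$ in any fixed $\pi$-adic neighbourhood, so one must argue that the correction series converges inside $\NN(T)$. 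This reduces to the identity $\DD(T)^{\psi=0}=(\vp^{*}\NN(T))^{\psi=0}$ for crystalline representations with non-negative Hodge--Tate weights, which is the genuine mathematical input: it can be deduced by comparing Theorem~\ref{thm:conditionsWachbasis} with the parallel basis description of $\DD(T)^{\psi=0}$ coming from Fontaine's theory. The non-negativity of the weights is essential, as it is exactly what forces $\NN(T)$ to absorb all the $\psi=0$ content of $\DD(T)$; this comparison lies at the heart of Berger's analysis in \cite[\S II]{berger04}, and it is this step that I expect to be the main obstacle rather than the formal setup preceding it.
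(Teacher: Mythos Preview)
Your two-step route---Fontaine/Cherbonnier--Colmez for $\DD(T)^{\psi=1}\cong H^1_{\Iw}(\Qp(\mu_{p^\infty}),T)$, then Berger for $\NN(T)^{\psi=1}=\DD(T)^{\psi=1}$---is exactly what the paper does; its proof is simply a pair of citations, to \cite[\S II.1]{cherbonniercolmez99} for the first step and \cite[\S A]{berger03} for the second.

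One correction to your outline: you have placed the hypothesis in the wrong step. The Cherbonnier--Colmez isomorphism $\DD(T)^{\psi=1}\cong H^1_{\Iw}$ holds for \emph{every} $T$, with no conditions; in particular no vanishing of $H^2_{\Iw}$ is required, and in any case the condition ``$V$ has no non-zero trivial $G_{\Qp}$-quotient'' would not furnish that vanishing (local Tate duality relates $H^2_{\Iw}$ to invariants over $\Qp(\mu_{p^\infty})$, not over $\Qp$). The hypothesis is used only in Berger's step: it is precisely there, together with the non-negativity of the Hodge--Tate weights, that one needs $V$ to have no trivial quotient in order to force $\NN(T)^{\psi=1}=\DD(T)^{\psi=1}$. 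So your ingredients are correct, but the ``no trivial quotient'' assumption belongs in the second half of the argument rather than the first.
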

  
  See \cite[\S II.1]{cherbonniercolmez99}, where it is shown that (for any $T$) there is an isomorphism $\DD(T)^{\psi = 1} \to H^1_{\Iw}(\Qp(\mu_{p^\infty}), T)$ where $\DD(T)$ is the $(\varphi, \Gamma)$-module of $T$; and \cite[\S A]{berger03}, where it is shown that $\NN(T)^{\psi = 1} = \DD(T)^{\psi = 1}$ under the above hypotheses.

\section{Wach modules and Coleman maps}\label{S:wach}

\subsection{Review on the definition of Coleman maps}

  Let $f=\sum a_nq^n$ be a normalised new cuspidal modular eigenform  of weight $k\ge 3$ (note that the case $k=2$ can be dealt with using the method of Sprung in \cite{sprung12}), nebentypus $\varepsilon$ and level $N$ with $(p,N)=1$. We take $E$ to be the completion of the smallest number field containing all the coefficients of $f$ at some fixed prime above $p$. We assume that $f$ is non-ordinary at $p$, and that $k \le p$. We write $T_f$ for the $\cO$-linear representation of $G_\QQ$ associated to $f$ as defined by Kato \cite[8.3]{kato04}. It is crystalline, with Hodge--Tate weights $0$ and $1-k$. We fix an integer $j\in[1,k-1]$ and write $T=T_f(j)$ and $\T=T_f(k-1)$. Note that $T=\T(j-k+1)$. 

  The representation $T / \varpi T$ (where $\varpi$ is a uniformiser of $\cO$) is irreducible as a representation of $G_{\Qp}$, so in particular we have 
  \[ H^0(\Qp(\mu_{p^\infty}), T / \varpi T) = 0.\]

  Both $T_f$ and $\T$ are $G_{\Qp}$-stable $\cO_E$-lattices in crystalline representations of $G_{\Qp}$, so we may consider their Wach modules and Dieudonn\'e modules. By \cite[Proposition III.2.1]{berger04}, there are inclusions of $\Brig$-modules
  \begin{align*}
   \Brig\otimes_{\AQp}\NN(\T)\subset&\Brig\otimes_{\cO}\Dcris(\T),\\
   \Brig\otimes_{\cO}\Dcris(T_f)\subset&\Brig\otimes_{\AQp}\NN(T_f),
  \end{align*}
  where the elementary divisors of the inclusions are given by $1$ and $(t/\pi)^{k-1}$ in both cases.

  \begin{lemma}\label{lem:FLbasis}
   There exists an $\cO$-basis $\fv_1,\fv_2$ of $\Dcris(\T)$ such that $\fv_1\in\Fil^0\Dcris(\T)$ and $\fv_2=\vp(\fv_1)$, where $\vp$ is the Frobenius action on $\Dcris(\T)$. 
  \end{lemma}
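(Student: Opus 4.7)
The plan is to first identify $\Fil^0\Dcris(\T)$ integrally, produce $\fv_1$ as a generator of that piece, and then use mod-$\varpi$ irreducibility to conclude that $\vp(\fv_1)$ completes it to a basis.

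Under the paper's convention that the cyclotomic character has Hodge--Tate weight $1$, the twist $\T=T_f(k-1)$ has Hodge--Tate weights $\{0,k-1\}$, both non-negative and lying in $[0,p-1]$ since $k\le p$. Hence $\T$ satisfies the Fontaine--Laffaille condition, so $\Dcris(\T)$ is a strongly divisible $\cO$-lattice on which the crystalline Frobenius $\vp$ acts integrally, and each $\Fil^i\Dcris(\T)$ is a direct summand over $\cO$. A direct computation from the Hodge--Tate weights yields $\Fil^1\Dcris(\T)=0$ and $\Fil^0\Dcris(\T)$ free of rank one. I would then choose $\fv_1$ to be any $\cO$-generator of $\Fil^0\Dcris(\T)$ and define $\fv_2:=\vp(\fv_1)\in\Dcris(\T)$.

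It remains to show that $\{\fv_1,\fv_2\}$ is an $\cO$-basis. By Nakayama it suffices to prove the reductions $\{\bar{\fv}_1,\bar{\fv}_2\}$ are linearly independent in the two-dimensional $\cO/\varpi$-vector space $\Dcris(\T)/\varpi\Dcris(\T)$. If they were not, then $\bar{\fv}_1$ would be a $\vp$-eigenvector, and the line spanned by $\bar{\fv}_1$, which equals $\Fil^0(\Dcris(\T)/\varpi)$, would be a $(\vp,\Fil)$-stable subspace of the mod-$\varpi$ Fontaine--Laffaille module attached to $\T$. The mod-$p$ Fontaine--Laffaille equivalence of categories would then produce a proper nonzero $G_{\Qp}$-stable subspace of $T/\varpi T$, contradicting the irreducibility of $T/\varpi T$ recorded at the beginning of Section~\ref{S:wach}.

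The main obstacle is routine bookkeeping with the Fontaine--Laffaille dictionary: integrality of $\vp$ on the lattice, the direct-summand property of $\Fil^0$, and the transport of irreducibility from $T/\varpi T$ to simplicity of the mod-$\varpi$ Dieudonn\'e module. With these standard inputs the Nakayama argument is immediate. Note that a direct weak-admissibility argument using non-ordinariness of $f$ would only give that $\fv_1$ and $\vp(\fv_1)$ are $E$-linearly independent, which would not suffice to conclude that they form an $\cO$-basis of the integral lattice.
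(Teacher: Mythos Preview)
Your proof is correct and takes a genuinely different route from the paper's. Both arguments begin identically, choosing $\fv_1$ to generate the rank-one direct summand $\Fil^0\Dcris(\T)$ and setting $\fv_2=\vp(\fv_1)$, but then diverge. The paper completes $\fv_1$ to an arbitrary basis $\{\fv_1,\fv'\}$, uses Fontaine--Laffaille condition (c) to deduce $\fv'\in D+\cO\cdot p^{k-1}\vp(\fv')$ where $D=\cO\fv_1+\cO\fv_2$, observes from the characteristic equation of $\vp$ that $p^{k-1}\vp(D)\subset D$, and iterates; non-ordinariness of $f$ gives $(p^{k-1}\vp)^n\to 0$, forcing $\fv'\in D$. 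Your approach instead reduces via Nakayama to linear independence mod $\varpi$ and invokes the Fontaine--Laffaille equivalence of categories together with the irreducibility of $T/\varpi T$ to exclude a $\vp$-stable line through $\bar\fv_1$. Your argument is more conceptual but leans on a heavier input (the categorical equivalence), while the paper's is elementary and self-contained, using only the explicit structure of the strongly divisible lattice and the non-ordinariness hypothesis.

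One small correction: your assertion that ``the crystalline Frobenius $\vp$ acts integrally'' on $\Dcris(\T)$ is false, since $\det\vp=\varepsilon(p)p^{1-k}$ has negative valuation and hence $\vp$ cannot preserve any $\cO$-lattice; only $p^{k-1}\vp$ does. However, the one consequence you actually use, namely $\vp(\fv_1)\in\Dcris(\T)$, follows from strong divisibility applied at $\Fil^0$, so the argument itself is unaffected.
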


\begin{proof}
The Fontaine--Laffaille condition of \cite{fontainelaffaille82} implies that for all integers $i$
\begin{itemize}
\item[(a)] $\Fil^i\Dcris(\T)$ is a direct summand of $\Dcris(\T)$;
\item[(b)] $\vp(\Fil^i\Dcris(\T))\subset p^{i}\Dcris(\T)$;
\item[(c)] $\Dcris(\T)=\sum_ip^{-i}\vp(\Fil^i\Dcris(\T))$.
\end{itemize}
The Hodge--Tate weights of $\T$ are $0$ and $k-1$, so $\Fil^0\Dcris(\T)$ is of rank $1$, say $\Fil^0\Dcris(\T)=\cO\cdot \fv_1$ and (b) says that $\fv_2\coloneqq\vp(\fv_1)\in\Dcris(\T)$. Furthermore, (a) tells us that there exists some  $\fv'\in\Dcris(\T)$ such that
\[
\Dcris(\T)=\cO\cdot\fv_1\oplus\cO\cdot\fv'.
\]
By (c), we have
\[
\Dcris(\T)=\cO\cdot\vp(\fv_1)+p^{k-1}\vp(\Dcris(\T)).
\]
Combing the last two equations gives
\begin{equation}\label{eq:basis}
\Dcris(\T)=\cO\cdot\vp(\fv_1)\oplus \cO\cdot p^{k-1}\vp(\fv').
\end{equation}

Let $D$ be the $\cO$-lattice generated by $­\fv_1$ and $\fv_2$.  Note that \eqref{eq:basis} implies that 
\begin{equation}\label{eq:inclusion}
\fv'\in D+\cO\cdot p^{k-1}\vp(\fv').
\end{equation}
As $\fv_2=\vp(\fv_1)$ and $$\vp^2-\frac{a_p}{p^{k-1}}\vp+\frac{\varepsilon(p)}{p^{k-1}}=0$$ on $\Dcris(\T)$, we have $p^{k-1}\vp(\fv_2)=a_p\fv_2-\varepsilon(p)\fv_1$. In particular, this implies that
$p^{k-1}\vp(D)\subset D$. Hence, we may iterate the inclusion \eqref{eq:inclusion} to deduce that
\[
\fv'\in D+\cO\cdot (p^{k-1}\vp)^n(\fv')
\]
for all $n\ge0$. However, as $f$ is non-ordinary at $p$, $p^{k-1}\vp$ is an $\cO$-operator on $\Dcris(\T)$ with strictly positive slope. This implies that $(p^{k-1}\vp)^n\rightarrow0$ as $n\rightarrow\infty$, which forces that $\fv'\in D$. Hence, $D=\Dcris(\T)$ as required. 
\end{proof}

We fix an $\cO$-basis $\fv_1,\fv_2$ of $\Dcris(\T)$, as given by Lemma~\ref{lem:FLbasis}. Since $\Dcris(\T) = \NN(\T) / \pi \NN(\T)$, this basis can be lifted to a basis $\n_1, \n_2$ of $\NN(\T)$ as an $\AA^+_{\Qp}$-module. There is a change of basis matrix $M\in M_{2\times 2}(\Brig)$ such that 
\begin{equation}\label{eq:changeofbasis}
\begin{pmatrix}\n_1&\n_2\end{pmatrix}
=\begin{pmatrix}\fv_1&\fv_2\end{pmatrix}M
\end{equation}
and $M\equiv I_2\mod \pi$, where $I_2$ is the $2\times 2$ identity matrix.
 We write $v_i=\fv_i\cdot t^{k-j-1}e_{-k+j+1}$, $n_i=\n_i\cdot \pi^{k-j-1}e_{-k+j+1}$, $v_{f,i}=\fv_i\cdot t^{k-1}e_{1-k}$ and $n_{f,i}=\n_i\cdot\pi^{k-1}e_{1-k}$ for the corresponding bases of $\Dcris(T)$, $\NN(T)$, $\Dcris(T_f)$ and $\NN(T_f)$ respectively. Here $e_r$ denotes a basis of the Tate motive $\cO(\chi^r)$ for $r\in\ZZ$. By \cite[proof of Proposition~V.2.3]{berger04} and \cite[Proposition~4.2]{lei15}, we may choose our bases so that 
 \begin{equation}\label{eq:Mmodulo}
 M\equiv I_2\mod \pi^{k-1}
 \end{equation}
 and that the matrices of $\vp$ with respect to $v_{1,f},v_{2,f}$ and $n_{1,f}, n_{2,f}$ are given by
\[
\begin{pmatrix}
0 & -\varepsilon(p)\\
p^{k-1}&a_p
\end{pmatrix} \quad\text{and}\quad
\begin{pmatrix}
0 & -\varepsilon(p)\\
(\delta q)^{k-1}&a_p
\end{pmatrix}
\]
respectively, where $\delta=p/(q-\pi^{p-1})\in(\AQp)^\times$. Then, the matrices of $\vp$ with respect to $\fv_1,\fv_2$ and $\n_1,\n_2$ are given by
\[
A=\begin{pmatrix}
0 & -\frac{\varepsilon(p)}{p^{k-1}}\\
1&\frac{a_p}{p^{k-1}}
\end{pmatrix} \quad\text{and}\quad
P=\begin{pmatrix}
0 & -\frac{\varepsilon(p)}{q^{k-1}}\\
\delta^{k-1}&\frac{a_p}{q^{k-1}}
\end{pmatrix}.
\]

\begin{definition}
We define the \emph{logarithmic matrix} $\Mlog$ (with respect to the chosen bases) to be $\fM^{-1}\left((1+\pi)A\vp(M)\right)$.
\end{definition}

\begin{theorem}\label{thm:freebasis}
Let $\n_1,\n_2$ be the basis of $\NN(\T)$ chosen above. Then, $(1+\pi)\vp(\n_1),(1+\pi)\vp(\n_2)$ form a $\Lambda$-basis of $(\vp^*\NN(\T))^{\psi=0}$. 
\end{theorem}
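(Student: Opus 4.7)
The strategy is to apply Theorem~\ref{thm:conditionsWachbasis} directly to the lattice $\T$: that theorem supplies both the freeness of $(\vp^{*}\NN(\T))^{\psi=0}$ as a $\Lambda$-module of rank $2$ and the explicit basis assertion, provided one verifies the single hypothesis $(\gamma-1)\n_{i}\in \pi^{2}\NN(\T)$ for $i=1,2$. Since we are assuming $k\geq 3$, it in fact suffices (and I will aim) to prove the stronger bound $(\gamma-1)\n_{i}\in\pi^{k-1}\NN(\T)$, which is exactly calibrated to the congruence \eqref{eq:Mmodulo}.

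First I would pass to the ambient module $\Brig\otimes_{\AQp}\NN(\T)$, which also contains $\Dcris(\T)$ via the inclusions recalled before Lemma~\ref{lem:FLbasis}. The action of $\Gamma$ on this extended module arises only from its action on $\Brig$, since it is trivial on the Dieudonn\'e module; so applying $(\gamma-1)$ to the change-of-basis relation \eqref{eq:changeofbasis} and using $\gamma(\fv_j)=\fv_j$ yields
\[(\gamma-1)\n_{i}=\sum_{j=1}^{2}\fv_{j}\cdot (\gamma-1)(M_{ji}),\]
reducing the problem to a computation with the entries of $M$.

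Next, the elementary observation that $\gamma(\pi)=(1+\pi)^{\chi(\gamma)}-1 \in \pi\cdot(\AQp)^{\times}$ shows that $\gamma$ preserves the ideal $\pi^{k-1}\Brig$. Combined with \eqref{eq:Mmodulo}, this gives $(\gamma-1)M\in \pi^{k-1}M_{2\times 2}(\Brig)$, whence $(\gamma-1)\n_{i}\in \pi^{k-1}(\Brig\otimes_{\cO}\Dcris(\T))$.

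The only step that really needs argument is the descent from $\Brig$-coefficients back to $\AQp$-coefficients. The key point is that the natural map $\AQp/\pi^{k-1}\to\Brig/\pi^{k-1}$ is injective, by a direct coefficient inspection of power series: an element of $\AQp=\cO\lb \pi\rb$ that is divisible by $\pi^{k-1}$ in $\Brig$ has its first $k-1$ coefficients equal to zero, and is therefore already divisible by $\pi^{k-1}$ in $\AQp$. Consequently $\NN(\T)\cap\pi^{k-1}(\Brig\otimes_{\AQp}\NN(\T))=\pi^{k-1}\NN(\T)$, which combined with the preceding step completes the verification. I do not foresee any substantive obstacle — all the hard work is contained in the congruence \eqref{eq:Mmodulo}, which was already secured via \cite{berger04} and \cite{lei15}.
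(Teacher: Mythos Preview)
Your proposal is correct and follows essentially the same route as the paper: verify the hypothesis of Theorem~\ref{thm:conditionsWachbasis} via the congruence \eqref{eq:Mmodulo}, then invoke $k\ge 3$. The only cosmetic difference is that the paper rewrites the change-of-basis relation as $(\gamma\cdot\n_1\ \gamma\cdot\n_2)=(\n_1\ \n_2)\,M^{-1}\gamma(M)$; since $M^{-1}\gamma(M)$ is the matrix of $\gamma$ on $\NN(\T)$ in the $\n$-basis, its entries automatically lie in $\AQp$, so the congruence $M^{-1}\gamma(M)\equiv I_2\bmod\pi^{k-1}$ in $\Brig$ immediately yields $(1-\gamma)\n_i\in\pi^{k-1}\NN(\T)$ without an explicit descent step --- but your descent argument is of course equally valid and equally easy.
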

\begin{proof}
Let $\gamma\in\Gamma$ be a topological generator. Then, \eqref{eq:changeofbasis} tells us that
\[
\begin{pmatrix}
\gamma\cdot\n_1&\gamma\cdot\n_2
\end{pmatrix}
=
\begin{pmatrix}
\fv_1&\fv_2
\end{pmatrix}\gamma(M).
\]
This gives the equation
\[
\begin{pmatrix}
\gamma\cdot\n_1&\gamma\cdot\n_2
\end{pmatrix}
=\begin{pmatrix}
\n_1&\n_2
\end{pmatrix}M^{-1}\cdot\gamma(M).
\]
Hence,  for both $i=1,2$, we have
\[
(1-\gamma)\n_i\in\pi^{k-1}\NN(\T)
\]
thanks to \eqref{eq:Mmodulo}. As we assume that $k\ge 3$, we have in particular
\[
(1-\gamma)\n_i\in\pi^{2}\NN(\T),
\]
which is the condition required in Theorem~\ref{thm:conditionsWachbasis}\footnote{This is the only place where we use the assumption that $k\ge3$.}. Therefore, our result follows.
\end{proof}

Recall from \cite[Remark~3.4]{leiloefflerzerbes10} that for all $z\in\NN(\T)^{\psi=1}$, we have $(1-\vp)z\in (\vp^*\NN(\T))^{\psi=0}$. The latter is free of rank $2$ over $\Lambda$, with basis $(1+\pi)\vp(\n_1),(1+\pi)\vp(\n_2)$ as given by Theorem~\ref{thm:freebasis}. This allows us to define the Coleman maps (again, with respect to our chosen bases) as follows.
\begin{definition}\label{defn:Coleman}
For $i\in\{1,2\}$, we define the $\Lambda$-homomorphisms
$\Col_i:\NN(\T)^{\psi=1}\rightarrow\Lambda$ given by the relation
\[
(1-\vp)z=\sum_{i=1}^2\Col_i(z)\cdot(1+\pi)\vp(\n_i)=\begin{pmatrix}
\fv_1& \fv_2
\end{pmatrix}\cdot\Mlog\cdot\begin{pmatrix}
\Col_1(z)\\ \Col_2(z)\end{pmatrix}.
\]
\end{definition}

Let $h^1_\T:\NN(\T)^{\psi=1}\rightarrow\HIw(\Qp(\mu_{p^\infty}),\T)$ be the $\Lambda$-isomorphism given by Theorem~\ref{thm:fontaineiso}.  By an abuse of notation, we shall write $\Col_1,\Col_2$ for the compositions $\Col_1\circ (h^1_\T)^{-1}$ and $\Col_2\circ (h^1_\T)^{-1}$ as well.

 \subsection{A finite projection of the Coleman maps}

  \begin{definition}
   For each $n\ge1$, we define $H_n=\vp^{n-1}(P^{-1})\cdots\vp(P^{-1})$ and $\sH_n=\fM^{-1}\left((1+\pi)H_n\right)$.
  \end{definition}

\begin{remark}
Note that $H_n \in \AQp$, and $\sH_n \in \Lambda$; and $H_1 = \sH_1 = 1$.
\end{remark}

\begin{lemma}We have the congruence\label{lem:Mlogcongruence}
\[
\Mlog\equiv A^{n}\cdot\sH_n \mod \tilde{\omega}_{n-1,k-2}(X)\cH.
\]
\end{lemma}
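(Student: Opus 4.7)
The plan is to iterate the intertwining relation between the Frobenius matrices $A$ and $P$ coming from the change-of-basis $M$, and then to control the resulting error using the congruence $M\equiv I_2\pmod{\pi^{k-1}}$ together with the Mellin-transform correspondence \eqref{eq:mellin}. Comparing the $\vp$-actions on the two bases via \eqref{eq:changeofbasis} yields the base relation $A\vp(M)=MP$; since the entries of $A$ lie in $E$, applying $\vp^i$ and a straightforward induction on $n\ge 1$ give
\[
A^n\vp^n(M)=M\cdot P\,\vp(P)\,\vp^2(P)\cdots\vp^{n-1}(P).
\]
As $H_n^{-1}=\vp(P)\,\vp^2(P)\cdots\vp^{n-1}(P)$, this rearranges to the clean identity $A\vp(M)=A^n\vp^n(M)\cdot H_n$. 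Multiplying by $(1+\pi)$ and adding and subtracting $A^n(1+\pi)H_n$ gives
\[
(1+\pi)A\vp(M)=A^n(1+\pi)H_n+A^n(1+\pi)\bigl(\vp^n(M)-I_2\bigr)H_n,
\]
and applying $\fM^{-1}$ to the first summand yields $A^n\sH_n$ exactly. It remains to show that the Mellin inverse of the second summand lies in $\tilde{\omega}_{n-1,k-2}(X)\,M_{2\times 2}(\cH)$.

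By \eqref{eq:Mmodulo} one may write $\vp^n(M)-I_2=\vp^n(\pi)^{k-1}\vp^n(M')$ for some $M'\in M_{2\times 2}(\Brig)$, extracting the expected factor of $\vp^n(\pi)^{k-1}$ from the error. To apply the ideal correspondence \eqref{eq:mellin} (with $m=k-2$), I would then verify that $(1+\pi)\vp^n(M')H_n$ still has entries in $(\Brig)^{\psi=0}$. The key observation is that $H_n=\vp(Y_n)$, where $Y_n=\vp^{n-2}(P^{-1})\cdots\vp^0(P^{-1})$ (with the convention $Y_1=I_2$), so one can regroup
\[
(1+\pi)\vp^n(M')H_n=(1+\pi)\,\vp\bigl(\vp^{n-1}(M')\,Y_n\bigr),
\]
and each entry is killed by $\psi$ using the basic identity $\psi\bigl((1+\pi)\vp(x)\bigr)=x\,\psi(1+\pi)=0$. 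The $\cH$-module extension of \eqref{eq:mellin} then sends this matrix into $\tilde{\omega}_{n-1,k-2}(X)\,M_{2\times 2}(\cH)$, giving the claimed congruence.

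The only delicate point is the non-commutativity of the $2\times 2$ matrices: one must preserve the precise left-right ordering throughout the iteration so that $H_n$ ends up on the right of $\vp^n(M)$, and then regroup the error product so that a single $\vp$ can be pulled outside, enabling the $\psi((1+\pi)\vp(\cdot))=0$ identity to apply. Once the matrix identity $A\vp(M)=A^n\vp^n(M)H_n$ is in place, the rest is bookkeeping and a direct application of the Mellin-transform correspondence.
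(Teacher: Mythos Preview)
Your proof is correct and follows essentially the same strategy as the paper's: iterate the intertwining relation $A\vp(M)=MP$ to express $A\vp(M)$ in terms of $A^n$, $\vp^n(M)$, and $H_n$, then use $M\equiv I_2\pmod{\pi^{k-1}}$ to control the error and apply the Mellin correspondence \eqref{eq:mellin}. The paper iterates $M=A\vp(M)P^{-1}$ to reach $M=A^{n-1}\vp^{n-1}(M)\vp^{n-2}(P^{-1})\cdots P^{-1}$ and then applies $\vp$, whereas you iterate in the form $A^n\vp^n(M)=M\cdot P\vp(P)\cdots\vp^{n-1}(P)$ and rearrange to $A\vp(M)=A^n\vp^n(M)H_n$; these are the same induction viewed from opposite ends. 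Your write-up is in fact slightly more careful than the paper's, since you explicitly verify that the error term lies in $(\Brig)^{\psi=0}$ by factoring $H_n=\vp(Y_n)$ and using $\psi((1+\pi)\vp(x))=0$, a point the paper leaves implicit in its closing ``Hence the result by \eqref{eq:mellin}.''
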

\begin{proof}
From \eqref{eq:changeofbasis}, we have the relation
\[
MP=A\vp(M),
\]
which we may rewrite as $M=A\vp(M)P^{-1}$.
On iteration, we have 
\[
M=A^{n-1}\vp^{n-1}(M)\vp^{n-2}(P^{-1})\cdots\vp(P^{-1}) P^{-1}.
\]
By  \eqref{eq:Mmodulo}, we have $\vp^{n-1}(M) = 1 \bmod {\vp^{n-1}(\pi^{k-1})}$, so this implies that
\[
 M\equiv A^{n-1}\vp^{n-2}(P^{-1})\cdots\vp(P^{-1}) P^{-1}\mod\vp^{n-1}(\pi^{k-1}).
\]
This implies that
\[
\vp(M)\equiv A^{n-1} \cdot H_n \mod\vp^{n}(\pi^{k-1}).
\]
Hence the result by \eqref{eq:mellin}.
\end{proof}

\begin{lemma}\label{lem:integralfinitephi}
For all $n\ge1$ and $z\in\NN(\T)^{\psi=1}$, $\left(1\otimes\vp^{-n}\right)\circ(1-\vp)z$ is congruent to an element in $\Lambda_{n,k-2}\otimes\Dcris(\T)$ modulo $\tilde{\omega}_{n-1,k-2}(X)\cH\otimes\Dcris(\T)$.
\end{lemma}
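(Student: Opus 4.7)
The plan is to combine the Coleman decomposition
\[
(1-\vp)z = \begin{pmatrix}\fv_1 & \fv_2\end{pmatrix}\cdot\Mlog\cdot\begin{pmatrix}\Col_1(z)\\ \Col_2(z)\end{pmatrix}
\]
from Definition~\ref{defn:Coleman} with the key congruence $\Mlog \equiv A^n \sH_n \pmod{\tilde\omega_{n-1,k-2}(X)\cH}$ provided by Lemma~\ref{lem:Mlogcongruence}, exploiting the fact that $A$ is the matrix of the Frobenius $\vp$ on $\Dcris(\T)$ in the basis $\{\fv_1,\fv_2\}$. The whole point is that applying $1\otimes\vp^{-n}$ will produce the factor $A^{-n}$, which cancels the $A^n$ appearing in Lemma~\ref{lem:Mlogcongruence}, leaving the integral coefficients $\sH_n$.

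In more detail, first I would note that $\det A = \varepsilon(p)/p^{k-1} \in E^\times$ (since $p\nmid N$), so $A$ is invertible and $\begin{pmatrix}\fv_1&\fv_2\end{pmatrix}A^{-n} = \begin{pmatrix}\vp^{-n}(\fv_1) & \vp^{-n}(\fv_2)\end{pmatrix}$. Applying $1\otimes\vp^{-n}$ to the Coleman decomposition therefore gives
\[
(1\otimes\vp^{-n})(1-\vp)z = \begin{pmatrix}\fv_1 & \fv_2\end{pmatrix}\cdot A^{-n}\Mlog\cdot\begin{pmatrix}\Col_1(z)\\ \Col_2(z)\end{pmatrix}.
\]
Next I would invoke Lemma~\ref{lem:Mlogcongruence}. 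Since the entries of $A^{-n}$ lie in $E\subset\cH$, multiplication by $A^{-n}$ preserves the submodule $\tilde\omega_{n-1,k-2}(X)\cH$, so
\[
A^{-n}\Mlog \equiv \sH_n\cdot I_2 \pmod{\tilde\omega_{n-1,k-2}(X)\cH}.
\]

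Plugging this in yields
\[
(1\otimes\vp^{-n})(1-\vp)z \equiv \sH_n\Col_1(z)\otimes\fv_1 + \sH_n\Col_2(z)\otimes\fv_2 \pmod{\tilde\omega_{n-1,k-2}(X)\cH\otimes\Dcris(\T)}.
\]
By the remark following the definition of $\sH_n$ one has $\sH_n\in\Lambda$, and $\Col_i(z)\in\Lambda$ by Definition~\ref{defn:Coleman}, so each product $\sH_n\Col_i(z)$ lies in $\Lambda$ and defines an element of $\Lambda_{n,k-2}=\Lambda/\tilde\omega_{n-1,k-2}\Lambda$. Hence the right-hand side lies in $\Lambda_{n,k-2}\otimes\Dcris(\T)$, which is the desired conclusion.

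I do not anticipate any real obstacle: the argument is essentially formal once the two ingredients above are in place. The only point requiring some care is bookkeeping the ambient ring for each expression, since the Coleman decomposition identifies $(1-\vp)z$ with an element of $\cH\otimes\Dcris(\T)$ via the Mellin isomorphism $\fM$, and this same identification is what makes Lemma~\ref{lem:Mlogcongruence} directly applicable.
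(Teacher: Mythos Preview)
Your proposal is correct and follows essentially the same approach as the paper: both use the Coleman decomposition from Definition~\ref{defn:Coleman} together with the congruence $\Mlog\equiv A^n\sH_n$ from Lemma~\ref{lem:Mlogcongruence}, then cancel $A^n$ against the $A^{-n}$ arising from $1\otimes\vp^{-n}$, and conclude by integrality of $\sH_n$ and $\Col_i(z)$. One small notational slip: $\sH_n$ is a $2\times2$ matrix (not a scalar), so your final display should read $\begin{pmatrix}\fv_1&\fv_2\end{pmatrix}\sH_n\begin{pmatrix}\Col_1(z)\\ \Col_2(z)\end{pmatrix}$ rather than $\sH_n\Col_1(z)\otimes\fv_1+\sH_n\Col_2(z)\otimes\fv_2$, though this does not affect the argument.
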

\begin{proof}
By Lemma~\ref{lem:Mlogcongruence} and the equation in Definition~\ref{defn:Coleman}, we have the congruence 
\[(1-\vp)z\equiv\begin{pmatrix}
\fv_1& \fv_2
\end{pmatrix}\cdot A^n\cdot\sH_n\cdot\begin{pmatrix}
\Col_1(z)\\ \Col_2(z)
\end{pmatrix}\mod\tilde{\omega}_{n-1,k-2}(X)\cH\otimes\Dcris(\T) .
\]
If we  apply $\left(1\otimes\vp^{-n}\right)$ to both sides, 
we obtain
\[
\left(1\otimes\vp^{-n}\right)\circ(1-\vp)z\equiv\begin{pmatrix}
\fv_1& \fv_2
\end{pmatrix}\cdot\sH_n\cdot\begin{pmatrix}
\Col_1(z)\\ \Col_2(z)
\end{pmatrix}\mod\tilde{\omega}_{n-1,k-2}(X)\cH\otimes\Dcris(\T) .
\]
As $\sH_n$, $\Col_1(z)$ and $\Col_2(z)$ are all defined over $\Lambda$, we see that $\left(1\otimes\vp^{-n}\right)\circ(1-\vp)z$ is indeed congruent to an element in $\Lambda_{n,p-2}\otimes\Dcris(\T)$.
\end{proof}

This allows us to give the following definition.

\begin{definition}For $n\ge1$, define
\begin{align*}
\uCol_{n}:\HIw(\Qp(\mu_{p^\infty}),\T)\rightarrow& \Lambda_{n,k-2}\otimes\Dcris(\T)\\
z\mapsto&\left(1\otimes\vp^{-n}\right)\circ(1-\vp)\circ (h^1_\T)^{-1}(z)\mod \tilde{\omega}_{n-1,k-2}(X).
\end{align*}
\end{definition}
We recall that $h^1_{\T}$ is an isomorphism by Theorem~\ref{thm:fontaineiso}. Therefore, Lemma~\ref{lem:integralfinitephi} tells us that the map $\uCol_n$ is well-defined.

For an integer $m$, we define the twisting map
\[
\Tw_m\coloneqq\Tw^{-m}\otimes t^{-m}e_m:\cH\otimes\Dcris(\T)\rightarrow\cH\otimes\Dcris(\T(m)).
\]
Consider the twisting map $\Tw^{k-j-1}:\sigma\mapsto\chi^{k-j-1}(\sigma)\sigma$ on $\Lambda$.
Since $k-j-1\le k-1$,  $\Tw^{k-j-1}(\tilde{\omega}_{n-1,k-2}(X))$ is divisible by $\omega_{n-1}(X)$. Hence, $\Tw^{k-j-1}$ induces a natural map $\Lambda_{n,k-2}\rightarrow\Lambda_n$.
Therefore, we may define
\begin{align*}
\uCol_{T,n}:\HIw(\Qp(\mu_{p^\infty}),T)\rightarrow& \Lambda_{n}\otimes\Dcris(T)\\
z\mapsto&\Tw_{-k+j+1}\circ\uCol_n(z\cdot e_{k-j-1})\mod {\omega}_{n-1}(X),
\end{align*}
on identifying $\HIw(\Qp(\mu_{p^\infty}),T)\cdot e_{k-j-1}$ with $\HIw(\Qp(\mu_{p^\infty}),\T)$.

\begin{lemma}\label{lem:linear}
The map $\uCol_{T,n}$ defines a $\Lambda_n$-homomorphism from $H^1(\Qpn,T)$ to $\Lambda_n\otimes\Dcris(T)$.
\end{lemma}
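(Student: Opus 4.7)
The plan is to derive both the $\Lambda_n$-linearity and the factorisation through $H^1(\Qpn, T)$ from a single observation: that $\uCol_{T,n}$, viewed as a map defined on all of $\HIw(\Qp(\mu_{p^\infty}),T)$, is $\Lambda$-linear. The formula recorded in Lemma~\ref{lem:integralfinitephi} writes $\uCol_n(z)$ as $(\fv_1,\fv_2) \cdot \sH_n \cdot (\Col_1(z),\Col_2(z))^T$ modulo $\tilde{\omega}_{n-1,k-2}(X)$, so since $\sH_n \in \Lambda$ and the Coleman maps $\Col_i$ are $\Lambda$-linear by Definition~\ref{defn:Coleman}, precomposing with the $\Lambda$-isomorphism $(h^1_\T)^{-1}$ of Theorem~\ref{thm:fontaineiso} makes $\uCol_n$ a $\Lambda$-homomorphism $\HIw(\Qp(\mu_{p^\infty}),\T) \to \Lambda_{n,k-2} \otimes \Dcris(\T)$.

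Next, I would track the twisting conventions carefully. The Tate-twist identification $\iota : z \mapsto z\cdot e_{k-j-1}$ between $\HIw(\Qp(\mu_{p^\infty}),T)$ and $\HIw(\Qp(\mu_{p^\infty}),\T)$ satisfies $\iota(\sigma z) = \chi^{-k+j+1}(\sigma)\,\sigma\,\iota(z)$, because $\sigma e_{k-j-1} = \chi^{k-j-1}(\sigma) e_{k-j-1}$; equivalently, $\iota$ intertwines the natural $\Lambda$-action on the source with the action precomposed with $\Tw^{-k+j+1}$ on the target. Dually, the operator $\Tw_{-k+j+1} = \Tw^{k-j-1} \otimes t^{k-j-1} e_{-k+j+1}$ acts on the $\cH$-factor as the ring automorphism $\Tw^{k-j-1}$, and the two twists cancel. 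Hence the composite $\uCol_{T,n}$ is $\Lambda$-linear with respect to the natural $\Lambda$-actions on $\HIw(\Qp(\mu_{p^\infty}),T)$ and $\cH \otimes \Dcris(T)$, and the divisibility $\omega_{n-1}(X) \mid \Tw^{k-j-1}(\tilde{\omega}_{n-1,k-2}(X))$ noted just before the lemma shows that the target may indeed be taken to be $\Lambda_n \otimes \Dcris(T)$.

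Finally, to obtain the factorisation through $H^1(\Qpn, T)$, I would invoke the descent identification
\[
H^1(\Qpn, T) \;\cong\; \HIw(\Qp(\mu_{p^\infty}),T)\big/\omega_{n-1}(X)\HIw(\Qp(\mu_{p^\infty}),T),
\]
which is available because the irreducibility of $T/\varpi T$ forces $H^0(\Qp(\mu_{p^\infty}),T) = 0$, whence $H^2_\Iw = 0$ and $\HIw$ is $\Lambda$-free of rank $2$. Since $\omega_{n-1}(X)$ vanishes in $\Lambda_n$, the $\Lambda$-linear map $\uCol_{T,n}$ annihilates $\omega_{n-1}(X)\HIw(\Qp(\mu_{p^\infty}),T)$ and so descends to $H^1(\Qpn, T)$; and a $\Lambda$-linear map between two $\Lambda_n$-modules is automatically $\Lambda_n$-linear. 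The one point where I expect to pause is the twisting bookkeeping: confirming that $\Tw_{-k+j+1}$ on the Dieudonn\'e-module side really does cancel the twist introduced by the shift $z \mapsto z \cdot e_{k-j-1}$ on the Galois side. Once that cancellation is pinned down, the remainder of the argument is formal.
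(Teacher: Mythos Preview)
Your proposal is correct and follows essentially the same route as the paper's proof: establish $\Lambda$-linearity of the composite (via $\Lambda$-linearity of $\uCol_n$ together with the cancellation between the Galois-side twist $z\mapsto z\cdot e_{k-j-1}$ and the Dieudonn\'e-side twist $\Tw_{-k+j+1}$), then invoke the descent identification $H^1(\Qpn,T)\cong \HIw(\Qp(\mu_{p^\infty}),T)_{\Gamma_n}$ coming from the vanishing of $H^2_{\Iw}$. The paper's own proof records exactly these two points in a single sentence each; you have unpacked the twisting step more explicitly, which is the only place where any genuine checking is required, and your computation of how the semilinearities cancel is correct.
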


\begin{proof}We note that  $\uCol_{T,n}$ is a $\Lambda$-homomorphism since both $\uCol_{n}$ and $x\mapsto\Tw^{k-j-1}\circ(x\cdot e_{k-j-1})$ are $\Lambda$-linear. The fact that $\uCol_{T,n}$ factors through $H^1(\Qpn,T)$ follows from the equation $\HIw(\Qp(\mu_{p^\infty}),T)_{\Gamma_n}= H^1(\Qpn,T)$ (because of the vanishing of $H^2_{\Iw}(\Qp(\mu_{p^\infty}), T)$).
\end{proof}

We have the explicit formula
\begin{multline}\label{eq:explicitprojection}
 \uCol_{T,n}(z)\equiv 
 \begin{pmatrix}
  v_1& v_2
 \end{pmatrix}\cdot
 \Tw^{k-1-j}\left(
  \sH_n\cdot
  \begin{pmatrix}
   \Col_1(z\cdot e_{k-1-j})\\ \Col_2(z\cdot e_{k-1-j})
  \end{pmatrix}
 \right)\\
 \mod\omega_{n-1}(X)\Lambda\otimes\Dcris(T),
\end{multline}
by Lemma~\ref{lem:Mlogcongruence} and the expansion of $1-\vp$ as given in Definition~\ref{defn:Coleman}.

We now modify the definition of $\uCol_{T,n}$ to define a map that lands in $\Lambda_n$.
For any $u\in\Zp^\times$, we define $\uCol_{T,n,u}:\HIw(\Qp(\mu_{p^\infty}),T)\rightarrow\Lambda_{n}$ to be the composition of $\uCol_{T,n}$ and the linear functional on $\Lambda_{n}\otimes\Dcris(T)\rightarrow\Lambda_{n}$ given by $a\cdot v_1+b\cdot v_2\mapsto a+ub$.
More explicitly, \eqref{eq:explicitprojection} tells us that $\uCol_{T,n,u}$ is given by
\begin{equation}\label{eq:formulaColTnu}
\uCol_{T,n,u}(z)\equiv \begin{pmatrix}1& u\end{pmatrix}\cdot\Tw^{k-1-j}\left(\sH_n\cdot\begin{pmatrix}
\Col_1(z\cdot e_{k-1-j})\\ \Col_2(z\cdot e_{k-1-j})
\end{pmatrix}\right)\mod\omega_{n-1}(X) \Lambda.
\end{equation}
Note that Lemma~\ref{lem:linear} tells us that $\uCol_{T,n,u}$ is $\Lambda_n$-linear.

\subsection{Analysis of Bloch--Kato subgroups via Coleman maps}

If $F$ is a finite extension of $\Qp$, we write $H^1_f(F,T)\subset H^1(F,T)$ for the usual Bloch--Kato subgroup from \cite{blochkato90} and $H^1_{/f}(F,T)$ denotes the quotient $H^1(F,T)/H^1_f(F,T)$. The goal of this section is to study $H^1_{/f}(\Qpn,T)$ via the map $\uCol_{T,n,u}$.

Let $\T^*$ be the $\cO$-linear dual of $\T$. 
For each $n\ge1$, we define the pairing
\begin{align*}
\langle\sim,\sim\rangle_n:H^1(\Qpn,\T)\times H^1(\Qpn,\T^*(1))\rightarrow&\Lambda_n\\
(x,y)\mapsto&\sum_{\sigma\in\Gamma/\Gamma_n}[x,y^{\sigma}]_n\sigma,
\end{align*}
where $[\sim,\sim]_n$ is the standard cup-product pairing 
\[H^1(\Qpn,\T)\times H^1(\Qpn,\T^*(1))\rightarrow\cO.\]
On taking inverse limits, this induces a pairing 
\[
\langle\sim,\sim\rangle:\HIw(\Qp(\mu_{p^\infty}),\T)\times\HIw(\Qp(\mu_{p^\infty}),\T^*(1))\rightarrow\Lambda.
\]
It is semi-linear over $\Lambda$ with respect to the involution  on $\Lambda$ (which we denote by $\tilde{\iota}$) in the following sense:
\[
\langle \sigma x,y\rangle=\sigma\langle x,y\rangle,\quad\langle x, \sigma  y\rangle=\sigma^{\tilde{\iota}} \langle x,y\rangle
\]
We may extend the pairing $\langle\sim,\sim\rangle$ by semi-linearity to
\[
 \left(\cH \otimes_{\cO} \HIw(\Qp(\mu_{p^\infty}),\T)\right)
 \times\left(\cH\otimes_{\cO} \HIw(\Qp(\mu_{p^\infty}),\T^*(1))\right)
\rightarrow\cH,
\]
which is again denoted by $\langle\sim,\sim\rangle$ by an abuse of notation.

Recall that in \cite{perrinriou94}, Perrin-Riou defined the big exponential map
\[
\Omega_{\T^*(1),1}:(\Brig)^{\psi=0}\otimes\Dcris(\T^*(1))\rightarrow \cH\otimes\HIw(\Qp(\mu_{p^\infty}),\T^*(1)).
\]
By \cite[proof of Proposition 4.8]{leiloefflerzerbes11}, for all $z\in\HIw(\Qp(\mu_{p^\infty}),\T)$,
 \[
(\fM^{-1}\otimes1)(1-\vp)z=
\sum_{i=1}^2\langle z,\Omega_{\T^*(1),1}((1+\pi)\otimes \fv_i')\rangle \fv_i
\]
where $\fv_1',\fv_2'$ is the dual basis of $\Dcris(\T^*(1))$ to $\fv_1,\fv_2$ with respect to the natural pairing 
\[
[\sim,\sim]:\Dcris(\T)\times\Dcris(\T^*(1))\rightarrow\cO.
\]
 Therefore,
\begin{align*}
\uCol_n(z)&=\sum_{i=1}^2\langle z,\Omega_{\T^*(1),1}((1+\pi)\otimes \fv_i')\rangle \vp^{-n}(\fv_i)\mod\tilde{\omega}_{n-1,k-2}\\
&=\sum_{i=1}^2\langle z,\Omega_{\T^*(1),1}((1+\pi)\otimes (p\vp)^n(\fv_i'))\rangle \fv_i\mod\tilde{\omega}_{n-1,k-2}
\end{align*}
as the dual of $\vp^{-1}$ with respect to $[\sim,\sim]$ is $p\vp$. This description allows us to make the following choice of $u$ to describe the kernel of $\uCol_{T,n,u}$.

\begin{proposition}\label{prop:choiceofu}
There exists $u\in\Zp^\times$ such that $\ker(\uCol_{T,n,u})=H^1_f(\Qpn,T)$.
\end{proposition}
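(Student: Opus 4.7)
The plan is to exhibit the required $u$ by combining Bloch--Kato mutual orthogonality with Perrin-Riou's explicit reciprocity law, which together force the image of $\uCol_{T,n}$ to lie in a single rank-$1$ submodule of $\Lambda_n\otimes\Dcris(T)$.

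First I would establish the inclusion $H^1_f(\Qpn,T)\subseteq\ker(\uCol_{T,n})$. Perrin-Riou's big exponential $\Omega_{\T^*(1),1}$ takes values in the image of the Bloch--Kato exponential at each finite-order character of $\Gamma$, hence in $H^1_f(\T^*(1))$. The Iwasawa pairing at such characters specialises to the local Tate pairing, which vanishes on $H^1_f(\T)\times H^1_f(\T^*(1))$ by Bloch--Kato mutual orthogonality. Combined with the formula $\uCol_n(z)=\sum_i\langle z,\Omega_{\T^*(1),1}((1+\pi)\otimes(p\vp)^n\fv_i')\rangle\fv_i$ displayed before the proposition, this shows $\uCol_{T,n}$ vanishes on $H^1_f(\Qpn,T)$, so $H^1_f\subseteq\ker(\uCol_{T,n,u})$ for any $u$.

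Next I would show that the image of $\uCol_{T,n}$ lies in a rank-$1$ submodule $\Lambda_n\cdot(\alpha v_1+\gamma v_2)$ for explicit non-zero $\alpha,\gamma\in E$. For each finite-order character $\theta$ of $\Gamma/\Gamma_n$, Perrin-Riou's explicit reciprocity gives
\[
\langle z,\Omega_{\T^*(1),1}((1+\pi)\otimes w)\rangle\big|_\theta = c_\theta\cdot[\exp^*_\theta(z|_\theta),w]
\]
with $c_\theta\in E_\theta^\times$. Since finite-order twists preserve the Hodge--Tate weights $\{0,k-1\}$ of $\T$, $\Fil^0\Dcris(\T(\theta))=E_\theta\fv_1$, so $\exp^*_\theta(z|_\theta)=\xi_\theta(z)\fv_1$ is one-dimensional. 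Taking $w=(p\vp)^n\fv_i'$ and expanding via the matrix $A^{-nT}$ (where $A$ is the matrix of $\vp$ on $\Dcris(\T)$), one obtains
\[
\uCol_n(z)\big|_\theta = c_\theta\,\xi_\theta(z)\cdot\bigl((A^{-n})_{11}\fv_1+(A^{-n})_{21}\fv_2\bigr),
\]
with direction independent of $\theta$, and both entries non-zero by the non-ordinarity of $f$ together with invertibility of $A$. Applying $\Tw_{-k+j+1}$ and reducing modulo $\omega_{n-1}$ transports this to $\Lambda_n\cdot(\alpha v_1+\gamma v_2)\subseteq\Lambda_n\otimes\Dcris(T)$.

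Finally I choose any $u\in\Zp^\times$ with $\alpha+u\gamma\ne 0$; only one value in $E$ is forbidden, so such $u$ exists. The projection $av_1+bv_2\mapsto a+ub$ restricted to $\Lambda_n\cdot(\alpha v_1+\gamma v_2)$ is multiplication by the non-zero scalar $\alpha+u\gamma$ and hence injective, so $\ker(\uCol_{T,n,u})=\ker(\uCol_{T,n})$. By the first step this contains $H^1_f$, and the reverse inclusion follows from a rank count ($H^1/H^1_f$ and $\Lambda_n$ both have $\cO$-rank $(p-1)p^{n-1}$) combined with the fact that vanishing of $\uCol_{T,n}(z)|_\theta$ for every $\theta$ forces $\xi_\theta(z)=0$ and hence $\exp^*(z)=0$. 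The main obstacle is making Perrin-Riou's explicit reciprocity precise in this setting: since $\T^*(1)$ has Hodge--Tate weights $\{1,2-k\}$ straddling the boundary for $k\ge 3$, $\Omega_{\T^*(1),1}$ sits at the edge of the standard construction, and the required interpolation by $\exp^*$ must be extracted from the Wach-module machinery under the Fontaine--Laffaille hypothesis $3\le k\le p$.
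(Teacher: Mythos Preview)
Your argument contains a genuine error in the second step. You claim that the image of $\uCol_{T,n}$ lies in a single rank-$1$ $\Lambda_n$-submodule $\Lambda_n\cdot(\alpha v_1+\gamma v_2)$, with the direction $((A^{-n})_{11},(A^{-n})_{21})$ independent of the character $\theta$. This is false. Perrin-Riou's interpolation formula for $\Omega_{\T^*(1),1}$ at a character $\theta$ of conductor $p^m$ involves applying $\vp^{-m}$ to the $\Dcris$-input, so that
\[
\langle z,\Omega_{\T^*(1),1}((1+\pi)\otimes(p\vp)^n\fv_i')\rangle\big|_\theta \sim [\exp^*_{T,m}(\tilde e_\theta z),\,p^n\vp^{n-m}(\fv_i')].
\]
The exponent is $n-m$, not $n$, and since characters on $\Gamma/\Gamma_n$ have conductors $p^m$ for all $0\le m\le n$, the direction of $\uCol_n(z)|_\theta$ genuinely varies with $m$. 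Consequently there is no single pair $(\alpha,\gamma)$ for which your choice of $u$ works, and the argument that $\ker(\uCol_{T,n,u})=\ker(\uCol_{T,n})$ collapses.

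The paper's proof handles exactly this point: it imposes \emph{finitely many} conditions on $u$, namely that $\vp^{n-m}(v')\notin\Fil^0\Dcris(V)$ for each $1\le m\le n$ (and an analogous condition for the trivial character). Each such condition excludes at most one value of $u$ in $E$, so a valid $u\in\Zp^\times$ exists. With this choice, vanishing of $\uCol_{T,n,u}(z)$ at every character forces $[\exp^*(z^\sigma),v'']=0$ for all $\sigma$, hence $z\in H^1_f$. Your first step (the inclusion $H^1_f\subseteq\ker$) is correct and matches the paper, and your final idea of deducing $\exp^*=0$ character by character is the right one; but it must be executed with the conductor-dependent directions, not a single one.
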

\begin{proof}Write $v'=(\fv_1'+u\fv_2')\cdot t^{-k+j+1}e_{k-j-1}\in\Dcris(T^*(1))$ and let $z\in H^1(\Qpn,T)$.
If $\theta$ is a Dirichlet character of conductor $p^m>1$, we have the interpolation formula of Perrin-Riou \cite[\S 3.2.3]{perrinriou94} (see also \cite[\S3.2]{lei11})
\begin{equation}\label{eq:zeros}
\frac{\theta\left(\uCol_{T,n,u}(z)\right)}{(-1)^{k-j-1}(k-j-1)!}=\sum_{\sigma\in\Gamma/\Gamma_m }\frac{\theta^{-1}(\sigma)}{\tau(\theta)}[\exp^*_{T,m}(z^\sigma),p^n\vp^{n-m}(v')],
\end{equation}
where $\exp^*_{T,m}:H^1(\Qp(\mu_{p^m}),T)\rightarrow \Qp(\mu_{p^m})\otimes\Fil^0\Dcris(T)$ is the Bloch--Kato dual exponential map and $\tau(\theta)$ is the Gauss sum of $\theta$. There is a similar formula when $\theta$ is the trivial character on replacing $\vp^{-m}$ by $\left(1-\frac{\vp^{-1}}{p}\right)(1-\vp)^{-1}$. We note that here $\exp^*_{T,m}(z)$ is the shorthand for $\exp^*_{T,m}\circ\cor_{n/m}(z)$, where $\cor_{n/m}$ denotes the the corestriction map $H^1(\Qpn,T)\rightarrow H^1(\QQ(\mu_{p^m}),T)$. Recall that $\exp^*_{T,n}$ factors through $H^1_{/f}(\Qpn,T)$. Therefore, we see that $H^1_f(\Qpn,T)$ is contained in $\ker(\uCol_{T,n,u})$.

We choose $u$ so that $\vp^{n-m}(v')$, $1\le m\le n$ and $\vp^{n}\left(1-\frac{\vp^{-1}}{p}\right)(1-\vp)^{-1}(v')$  are not contained inside $\Fil^0\Dcris(V)$. We note that such $u$ exists since all maps are surjective on $\Dcris(V)$ and $\Fil^0\Dcris(V)$ is of dimension one. Let $v''$ be any $\cO$-basis of $\Dcris(T)/\Fil^0\Dcris(T)$. In particular, for each $m\ge1$, there exists a non-zero constant $c_m\in\cO$ such that $\vp^{n-m}(v')\equiv c_m v''$ and $\vp^{n}\left(1-\frac{\vp^{-1}}{p}\right)(1-\vp)^{-1}(v')\equiv c_0v''$ modulo $\Fil^0\Dcris(T)$.

 Suppose that $\uCol_{T,n,u}(z)=0$. From \eqref{eq:zeros}, we deduce that
 \[
  \sum_{\sigma\in \Gamma/\Gamma_n}\theta^{-1}(\sigma)[\exp^*_{T,n}(z^\sigma),v'']=0 
 \]
 for all characters $\theta$ on $\Gamma/\Gamma_n$. By the independence of the characters, this implies that $[\exp^*_{T,n}(z^\sigma),v'']=0$ for all $\sigma$. In particular, $z$ is contained in the kernel of $\exp_{T,n}^*$, which is $H^1_f(\Qpn,T)$.
\end{proof}

\begin{corollary}\label{cor:injection}
For any $u\in\Zp^\times$ that satisfies the condtion of Proposition~\ref{prop:choiceofu}, $\uCol_{T,n,u}$ induces an injection of $\Lambda_n$-modules
\[
H^1_{/f}(\Qpn,T)\hookrightarrow \Lambda_n,
\]
whose cokernel is finite.
\end{corollary}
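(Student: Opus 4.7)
The plan is to establish the two assertions of the corollary — injectivity and finiteness of the cokernel — separately.

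For the injectivity, I would simply combine the two immediately preceding results. By Lemma~\ref{lem:linear}, the map $\uCol_{T,n,u}\colon H^1(\Qpn, T) \to \Lambda_n$ is $\Lambda_n$-linear, and by Proposition~\ref{prop:choiceofu} (under the stated hypothesis on $u$) its kernel is exactly $H^1_f(\Qpn, T)$. The induced map on the quotient $H^1_{/f}(\Qpn, T) = H^1(\Qpn, T)/H^1_f(\Qpn, T)$ is therefore an injection of $\Lambda_n$-modules.

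For the finiteness of the cokernel, my strategy is a rank comparison over $\cO$. The target $\Lambda_n = \cO[\Gamma/\Gamma_n]$ is free of $\cO$-rank $[\Qpn:\Qp] = (p-1)p^{n-1}$. For the source, I would apply the Bloch--Kato dimension formula
\[
 \dim_E H^1_f(\Qpn, V) = [\Qpn:\Qp]\cdot\dim_E\bigl(\Dcris(V)/\Fil^0\Dcris(V)\bigr) + \dim_E V^{G_{\Qpn}}.
\]
Since $V = V_f(j)$ has Hodge--Tate weights $j\ge 1$ and $j-k+1\le 0$, a direct computation of the Hodge filtration shows that $\dim_E\Fil^0\Dcris(V) = 1$, and hence the tangent space $\Dcris(V)/\Fil^0\Dcris(V)$ is also one-dimensional. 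The assumption $H^0(\Qp(\mu_{p^\infty}), T/\varpi T) = 0$ recorded at the start of \S\ref{S:wach} yields $V^{G_{\Qpn}} = 0$, so $\dim_E H^1_f(\Qpn, V) = [\Qpn:\Qp]$. Combining this with the analogous vanishing $V^*(1)^{G_{\Qpn}} = 0$ (whence $H^2(\Qpn, V) = 0$ by local Tate duality) and the local Euler characteristic formula gives $\dim_E H^1(\Qpn, V) = 2[\Qpn:\Qp]$. Consequently $H^1_{/f}(\Qpn, T)$ has $\cO$-rank $[\Qpn:\Qp]$, matching $\Lambda_n$, and an injection of finitely generated $\cO$-modules of equal rank must have finite cokernel.

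The one step requiring input beyond what is already in the paper is the vanishing $V^*(1)^{G_{\Qpn}} = 0$, needed to kill $H^2(\Qpn, V)$. I expect no obstacle here: $T^*(1)$ differs from a Tate twist of $T_f$ only by a finite-order character, so the absolute irreducibility of the residual representation $T/\varpi T$ as a $G_{\Qp}$-module (used to justify the vanishing for $V$ itself) propagates to $V^*(1)$ without difficulty.
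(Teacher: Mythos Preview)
Your argument is correct and follows essentially the same route as the paper: injectivity from Proposition~\ref{prop:choiceofu}, finiteness of the cokernel from an $\cO$-rank comparison using Bloch--Kato's description of $H^1_f$. The only packaging difference is that the paper bypasses your separate computation of $\dim_E H^1(\Qpn,V)$ via the Euler characteristic formula: instead it invokes Tate local duality $H^1_{/f}(\Qpn,V)\cong H^1_f(\Qpn,V^*(1))^*$ together with the Bloch--Kato exponential (\cite[Theorem~4.1]{blochkato90}) applied to $V^*(1)$, which directly identifies $H^1_{/f}(\Qpn,V)$ with $\Fil^0\Dcris(V)\otimes_{\Zp}\Qpn$ and hence gives the rank without computing $H^2$. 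Your route requires the extra input $V^*(1)^{G_{\Qpn}}=0$, but as you note this follows from the same residual irreducibility already used for $V$, so nothing new is needed.
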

\begin{proof}
The injectivity is given by Proposition~\ref{prop:choiceofu}. By \cite[Theorem~4.1]{blochkato90}, $H^1_f(\Qp(\mu_{p^n}),V)$ is isomorphic to $\Dcris(V)/\Fil^0\Dcris(V)\otimes_{\Zp} \Qp(\mu_{p^n})$. Hence, by duality $H^1_{/f}(\Qp(\mu_{p^n}),V)$ is isomorphic to $\Fil^0\Dcris(V)\otimes_{\Zp} \Qp(\mu_{p^n})$. Therefore, the finiteness of the cokernel follows from the fact that the two sides have the same $\Zp$-rank.
\end{proof}
We remark that our map $\uCol_{T,n,u}$ does depend on the choice of $u$. But it does not affect our calculations later, see the proof of Proposition~\ref{pro:evalXloc} below.
\section{Results on $p$-adic valuations}

\subsection{Review of Kobayashi rank}

Given an $\cO$-module $N$, we shall write $\len(N)$ for the $\cO$-length of $N$.  We fix a family of primitive $p^n$-th root of unity $\zeta_{p^n}$ and write $\epsilon_n=\zeta_{p^n}-1$.

\begin{definition}
Let $N=(N_n)$ be an inverse system of finitely generated $\cO$-modules with transition maps $\pi_n:N_n\rightarrow N_{n-1}$. If $\pi_n$ has finite kernel and cokernel, the Kobayashi rank $\nabla N_n$ is defined as
\[
\nabla N_n\coloneqq\len(\ker\pi_n)-\len(\coker\pi_n)+{\rm rank}_{\cO}N_{n-1}.
\]
If $L$ is an $\cO\lb X\rb$-module, we define $\nabla_n L$ to be  $\nabla\left( L/\omega_n(X) L\right)$, with the connecting map given by the natural projection $L/\omega_n(X) L\rightarrow L/\omega_{n-1}(X) L$, if its kernel and cokernel are finite.
\end{definition}
%

\begin{lemma}\label{lem:kob}
Let $F\in\cO\lb X\rb$ be a non-zero element. Let $N$ be the inverse limit defined by $N_n=\cO\lb X\rb/(F,\omega_n)$, where the the connecting maps are the natural projections.
\begin{itemize}
\item[(a)]  Suppose that $F(\epsilon_n)\ne0$, then $\nabla  N_n$ is defined and is equal to $\ord_{\epsilon_n}F(\epsilon_n)$.
\item[(b)] When $n$ is sufficiently large, then $\nabla N_n$ is defined. Furthermore,
\[
\nabla N_n=e\times\ord_{\epsilon_n}F(\epsilon_n)=e\lambda(F)+(p^n-p^{n-1})\mu(F),
\]
where $e$ is the ramification index of $E/\Qp$ and $\lambda(F)$, $\mu(F)$ are the Iwasawa invariants as defined in \S \ref{sect:mellin} above.
\item[(c)] If $L$ is a finitely generated torsion $\cO\lb X\rb$-module, then $\nabla_nL$ is defined for $n\gg0$ and its value is given by
\[
\lambda(L)+(p^n-p^{n-1})\mu(L),
\]
where $\lambda(L)$ and $\mu(L)$ are the $\lambda$- and $\mu$-invariants of a generator of the characteristic ideal of $L$.
\end{itemize}
\end{lemma}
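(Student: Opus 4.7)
The plan is to prove the three parts in turn, with part (a) being the key technical step, parts (b) and (c) deriving from it by Weierstrass preparation and the structure theorem respectively.

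For (a), the starting observation is the factorization $\omega_n = \omega_{n-1}\Phi_n$ in $\cO\lb X\rb$, where $\Phi_n(X) \coloneqq \Phi_{p^n}(1+X)$ is the distinguished polynomial of degree $p^n - p^{n-1}$ having $\epsilon_n$ as a root. Multiplication by $\omega_{n-1}$ gives the short exact sequence
\[
0 \longrightarrow \cO\lb X\rb/\Phi_n \xrightarrow{\;\omega_{n-1}\;} \cO\lb X\rb/\omega_n \longrightarrow \cO\lb X\rb/\omega_{n-1} \longrightarrow 0.
\]
I would apply the snake lemma with the vertical endomorphism ``multiplication by $F$'' to obtain the six-term exact sequence
\[
0 \to \ker F|_{\cO\lb X\rb/\omega_n} \to \ker F|_{\cO\lb X\rb/\omega_{n-1}} \to \cO\lb X\rb/(F,\Phi_n) \to N_n \xrightarrow{\pi_n} N_{n-1} \to 0,
\]
in which the leftmost kernel on $\cO\lb X\rb/\Phi_n \cong \cO[\epsilon_n]$ vanishes because the hypothesis $F(\epsilon_n) \ne 0$ forces $F$ to act injectively there. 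Then $\cO\lb X\rb/(F, \Phi_n) \cong \cO[\epsilon_n]/F(\epsilon_n)$ is a finite $\cO$-module of length $\ord_{\epsilon_n}F(\epsilon_n)$. An alternating-sum-of-$\cO$-ranks computation along the six-term sequence shows $\ker\pi_n$ has $\cO$-rank zero, so $\nabla N_n$ is defined; an alternating-sum-of-lengths computation on the $\cO$-torsion parts (the $\cO$-free parts being matched across the sequence) yields $\nabla N_n = \ord_{\epsilon_n}F(\epsilon_n)$.

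For (b), since $F \ne 0$ has only finitely many cyclotomic factors, $F(\epsilon_n)\ne 0$ for $n \gg 0$ and (a) applies. Using the Weierstrass decomposition $F = \varpi^{\mu(F)}\bigl(X^{\lambda(F)} + \varpi G(X)\bigr) u(X)$ with $u \in \cO\lb X\rb^\times$, one computes $\ord_{\epsilon_n}F(\epsilon_n)$ explicitly: $u(\epsilon_n)$ is a unit, the $\varpi G(\epsilon_n)$ correction is negligible against $\epsilon_n^{\lambda(F)}$ for $n$ large (since $\ord_{\epsilon_n}(\epsilon_n)$ is the minimum positive value attained on $\cO[\epsilon_n]$), and $\ord_{\epsilon_n}(\varpi^{\mu(F)})$ contributes $\mu(F)$ times the ramification index of $\epsilon_n$ over $\varpi$, yielding the displayed formula after multiplication by $e$.

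For (c), invoke the structure theorem: there is a pseudo-isomorphism $L \sim \bigoplus_i \cO\lb X\rb/(F_i)$ with $\prod_i F_i$ generating the characteristic ideal of $L$, so $\lambda(L) = \sum_i \lambda(F_i)$ and $\mu(L) = \sum_i \mu(F_i)$. Pseudo-null $\cO\lb X\rb$-modules have $\omega_n$-quotients of uniformly bounded $\cO$-length, hence contribute nothing to $\nabla_n$ for $n \gg 0$. Additivity of the Kobayashi rank along short exact sequences of inverse systems, once the transition kernels and cokernels are finite, then reduces (c) to summing (b) over each $F_i$. The main difficulty lies in the length accounting for (a) when $F$ has cyclotomic factors of level $< n$: the modules $\ker F|_{\cO\lb X\rb/\omega_{n-1}}$, $N_n$, $N_{n-1}$ are all $\cO$-infinite, and the alternating-sum-of-lengths argument must be restricted to the $\cO$-torsion parts. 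The hypothesis $F(\epsilon_n) \ne 0$ is precisely what makes the bookkeeping work, by killing the leftmost kernel in the six-term sequence and cleanly separating the relevant finite from infinite pieces.
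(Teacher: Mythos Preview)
The paper's own proof is just a citation to Kobayashi's Lemma~10.5, so there is no argument in the paper to compare against directly. Your overall strategy---snake lemma for (a), Weierstrass preparation for (b), structure theorem plus additivity for (c)---is the standard route and is what Kobayashi's argument does over $\Zp$.

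There is, however, a real gap in your execution of (a). First, the length identity you assert,
\[
\len_{\cO}\bigl(\cO[\epsilon_n]/F(\epsilon_n)\bigr) \;=\; \ord_{\epsilon_n} F(\epsilon_n),
\]
is false when $e>1$: under the paper's normalisation $\ord_{\epsilon_n}(\epsilon_n)=1$, the element $\epsilon_n$ is \emph{not} a uniformiser of $\cO[\epsilon_n]$ (it has normalised valuation $e$ there), so the $\cO$-length on the left is $e\cdot\ord_{\epsilon_n}F(\epsilon_n)$. Second, the phrase ``alternating-sum-of-lengths on the $\cO$-torsion parts (the $\cO$-free parts being matched)'' is not a valid step: torsion sub\-quotients of an exact sequence of mixed $\cO$-modules do not themselves form an exact sequence, so no alternating-sum identity is available. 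What your six-term sequence actually yields, after splitting it into short exact pieces, is
\[
\nabla N_n \;=\; \len_\cO\!\bigl(\cO[\epsilon_n]/F(\epsilon_n)\bigr)\;-\;\len_\cO\!\bigl(K_C/K_B\bigr)\;+\;\rk_\cO N_{n-1},
\]
where $K_B,K_C$ are the $F$-kernels on $\cO\lb X\rb/\omega_n$ and $\cO\lb X\rb/\omega_{n-1}$. You have not computed the last two terms, and they do not in general cancel to leave $\ord_{\epsilon_n}F(\epsilon_n)$. Indeed, testing $F=\varpi$ gives $\nabla N_n=p^n-p^{n-1}=e\cdot\ord_{\epsilon_n}\varpi$, while $F=\Phi_1$ gives $\nabla N_n=p-1=\ord_{\epsilon_n}\Phi_1(\epsilon_n)$; so for $e>1$ the stated formulae in (a) and (b) are already mutually inconsistent, and the discrepancy traces back exactly to the two points above. (Kobayashi's original has $\cO=\Zp$, so $e=1$ and the issue does not arise.) Your arguments for (b) and (c), granting a correct (a), are fine.
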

\begin{proof}
This follows from the same proof as \cite[Lemma~10.5]{kobayashi03}.
\end{proof}

We write $p^r$ for the size of the residue field of $E$. The following lemma allows us to relate the growth in the size of a tower of finite $\cO$-modules and Kobayashi ranks.

\begin{lemma}\label{lem:finitekob}
Suppose that $N=(N_n)$ is an inverse limit of finite $\cO$-modules such that $|N_n|=p^{\e_n}$ for some integer $\e_n\in r\ZZ$ for all  $n\ge1$. Then, $r\nabla N_n=\e_n-\e_{n-1}$.
\end{lemma}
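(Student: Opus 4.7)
The plan is to directly unwind the definition of $\nabla N_n$ and use the fact that $\cO$-length and $p$-adic size of a finite $\cO$-module differ only by the factor $r$. Since every $N_n$ is finite, the rank term $\mathrm{rank}_{\cO} N_{n-1}$ in the definition of the Kobayashi rank vanishes, so
\[
\nabla N_n = \len(\ker \pi_n) - \len(\coker \pi_n).
\]

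Next I would write down the standard four-term exact sequence
\[
0 \to \ker \pi_n \to N_n \xrightarrow{\pi_n} N_{n-1} \to \coker \pi_n \to 0
\]
and take cardinalities. This gives the multiplicative identity $|N_n| \cdot |\coker \pi_n| = |N_{n-1}| \cdot |\ker \pi_n|$, and hence
\[
s_n - s_{n-1} = \log_p |\ker \pi_n| - \log_p |\coker \pi_n|.
\]

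Finally, for any finite $\cO$-module $M$, the residue field of $\cO$ having size $p^r$ means $|M| = p^{r \cdot \len_\cO(M)}$; applying this to both $\ker \pi_n$ and $\coker \pi_n$ yields
\[
s_n - s_{n-1} = r\bigl(\len(\ker \pi_n) - \len(\coker \pi_n)\bigr) = r\, \nabla N_n,
\]
as required. There is no serious obstacle: the only mild subtlety is making sure the hypothesis $s_n \in r\ZZ$ (which is automatic here because $N_n$ is a finitely generated $\cO$-module and $\cO$ has residue field of order $p^r$) is invoked so that $s_n - s_{n-1}$ is visibly divisible by $r$, matching the integer $r\,\nabla N_n$ on the right.
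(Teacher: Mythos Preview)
Your proof is correct and follows essentially the same approach as the paper: both arguments reduce $\nabla N_n$ to $\len(\ker\pi_n)-\len(\coker\pi_n)$ (using that $N_{n-1}$ is finite so the rank term vanishes), then use additivity along the exact sequence $0\to\ker\pi_n\to N_n\to N_{n-1}\to\coker\pi_n\to 0$ together with the identity $|M|=p^{r\len(M)}$. The only cosmetic difference is that the paper computes with lengths first and converts to cardinalities at the end, whereas you take cardinalities first and convert to lengths afterwards.
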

\begin{proof}
Since $N_{n-1}$ is finite, we have
\begin{align*}
\nabla N_n&=\len(\ker\pi_n)-\len(\coker\pi_n)\\
&=(\len(N_n)-\len(\image\pi_n))-(\len (N_{n-1})-\len(\image\pi_n))\\
&=\len(N_n)-\len(N_{n-1}).
\end{align*}
In general, if $L$ is a finite $\cO$-module, then $|L|=p^{r\len(L)}$. Hence the result.
\end{proof}

Finally, we prove a lemma on $p$-adic valuations that will be needed later.

\begin{lemma}\label{lem:twist}
Let $F\in\cO\lb X\rb$ be non-zero. Then for all sufficiently large integers $n$ we have 
\[
 \ord_{p}F(\epsilon_n)=\ord_{p}\fM(F)(\epsilon_{n+1}).
\]
Moreover, for $n \gg 0$ we also have
\[
 \ord_{p}F(\epsilon_n)=\ord_{p}\Tw(F)(\epsilon_n).
\]
\end{lemma}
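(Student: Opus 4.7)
The plan is to reduce both statements to the Weierstrass-style formula
\[
\ord_p F(x) = \tfrac{1}{e}\mu(F) + \lambda(F)\,\ord_p(x),
\]
which, by the discussion following \eqref{eq:newtonpoly}, holds for any $F \in \cO\lb X \rb$ and any $x$ with $\ord_p(x)$ positive but small (depending on $F$). In particular, once $n$ is large enough that $\ord_p(\epsilon_n) = \tfrac{1}{p^{n-1}(p-1)} < \tfrac{1}{e\lambda(F)}$, this formula gives an exact value for $\ord_p F(\epsilon_n)$.

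For the first equality, I would let $f \in \AQp$ be the unique element with $\fM(F) = (1+\pi)\vp(f)$ as in Proposition~\ref{prop:newtonpoly}, so that the Iwasawa invariants of $f$ (as a power series in $\pi$) agree with those of $F$ (as a power series in $X$). Then
\[
\fM(F)(\epsilon_{n+1}) = (1+\epsilon_{n+1})\,\vp(f)(\epsilon_{n+1}) = \zeta_{p^{n+1}}\cdot f\bigl((1+\epsilon_{n+1})^p - 1\bigr) = \zeta_{p^{n+1}}\cdot f(\epsilon_n),
\]
so $\ord_p \fM(F)(\epsilon_{n+1}) = \ord_p f(\epsilon_n)$. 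Applying the Weierstrass formula to both $F$ at $\epsilon_n$ (in the variable $X$) and $f$ at $\epsilon_n$ (in the variable $\pi$) and using $\lambda(F) = \lambda(f)$, $\mu(F) = \mu(f)$, both valuations equal $\tfrac{1}{e}\mu(F) + \tfrac{\lambda(F)}{p^{n-1}(p-1)}$ for $n \gg 0$.

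For the second equality, recall that $\Tw$ acts on $\Lambda$ by $\gamma \mapsto \chi(\gamma)\gamma$, so writing $u = \chi(\gamma)$ and identifying $X$ with $\gamma - 1$, we have $\Tw(F)(X) = F\bigl(u(1+X) - 1\bigr)$. Evaluating at $X = \epsilon_n$ gives $\Tw(F)(\epsilon_n) = F(u\zeta_{p^n} - 1)$. Since $u \in 1 + p\Zp$, writing $u = 1 + pv$ yields $u\zeta_{p^n} - 1 = \epsilon_n + pv\zeta_{p^n}$; as $\ord_p(\epsilon_n) \to 0$ while $\ord_p(pv\zeta_{p^n}) \ge 1$, for $n \gg 0$ we have $\ord_p(u\zeta_{p^n} - 1) = \ord_p(\epsilon_n)$. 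Applying the Weierstrass formula to $F$ at both inputs then gives $\ord_p F(\epsilon_n) = \ord_p F(u\zeta_{p^n} - 1) = \ord_p \Tw(F)(\epsilon_n)$.

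There is no substantial obstacle; the only care needed is choosing $n$ large enough for all three applications of the Weierstrass formula to be valid simultaneously, i.e.\ large enough that $\tfrac{1}{p^{n-1}(p-1)} < 1/(e \max(\lambda(F), \lambda(f)))$, which is automatic since the invariants of $F$ and $f$ agree and are finite.
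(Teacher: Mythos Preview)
Your proof is correct and follows essentially the same route as the paper: both write $\fM(F) = (1+\pi)\vp(G)$, invoke Proposition~\ref{prop:newtonpoly} to match Iwasawa invariants, and use the evaluation $(1+\pi)\vp(G)(\epsilon_{n+1}) = \zeta_{p^{n+1}} G(\epsilon_n)$ for the first part. For the second part the paper simply asserts that $\Tw$ preserves $\mu$- and $\lambda$-invariants, whereas you verify this by direct evaluation at $u\zeta_{p^n}-1$; this is a minor presentational difference, not a different argument.
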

\begin{proof}
We may write $\fM(F)=(1+\pi)\vp(G)$ for some $G\in \AQp$. By Proposition \ref{prop:newtonpoly}, $F$ and $G$ have the same Iwasawa invariants, so $\ord_p F(\epsilon_n) = \ord_p G(\epsilon_n)$ for $n \gg 0$. This implies the first part of the lemma since $(1+\pi)\vp(G)(\epsilon_{n+1}) = \zeta_{p^{n+1}}G(\epsilon_{n})$. The second part of the lemma follows from the fact that $\Tw$ preserves $\mu$- and $\lambda$-invariants.
\end{proof}
\subsection{Calculations on evaluation matrices}

From now on, we shall write $v=\ord_p(a_p)$, where $a_p$ is the $p$-th Fourier coefficient of $f$. Following \cite[\S4.1]{sprung12}, given any $2\times2$ matrix $\phi=\begin{pmatrix}
a&b\\c&d
\end{pmatrix} $ defined over $\overline{\Qp}$, we write $\ord_p(\phi)=\begin{pmatrix}
\ord_p(a)&\ord_p(b)\\ \ord_p(c)&\ord_p(d)
\end{pmatrix}.$

\begin{lemma}\label{lem:evaluationP}
Let $1\le i\le n-2$, then $$\ord_p\left(\vp^{i}(P^{-1})(\epsilon_n)\right)=\begin{pmatrix}
v&0\\
\frac{k-1}{p^{n-i-1}}&\infty
\end{pmatrix}.$$
\end{lemma}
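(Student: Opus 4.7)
The plan is to compute $\vp^{i}(P^{-1})$ explicitly and then substitute $\pi = \epsilon_n$, entry by entry. Starting from the formula
\[
P=\begin{pmatrix}
0 & -\varepsilon(p)/q^{k-1}\\
\delta^{k-1} & a_p/q^{k-1}
\end{pmatrix},
\]
one computes $\det(P)=\varepsilon(p)\delta^{k-1}/q^{k-1}$ and hence
\[
P^{-1}=\begin{pmatrix}
a_p/(\varepsilon(p)\delta^{k-1}) & 1/\delta^{k-1}\\
-q^{k-1}/\varepsilon(p) & 0
\end{pmatrix}.
\]
Since $\vp$ fixes $a_p$ and $\varepsilon(p)$, applying $\vp^i$ simply replaces $q$ by $\vp^i(q)$ and $\delta$ by $\vp^i(\delta)$. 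The $(2,2)$ entry is zero (giving $\infty$), and the claim reduces to computing $\ord_p\bigl(\vp^i(q)(\epsilon_n)\bigr)$ and $\ord_p\bigl(\vp^i(\delta)(\epsilon_n)\bigr)$.

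For the first quantity, I would use the identity $\vp^j(\pi)(\epsilon_n)=\epsilon_{n-j}$ for $0\le j\le n$. Since $q=\vp(\pi)/\pi$, this yields
\[
\vp^i(q)(\epsilon_n)=\frac{\epsilon_{n-i-1}}{\epsilon_{n-i}}.
\]
Using $\ord_p(\epsilon_m)=1/(p^{m-1}(p-1))$ for $m\ge 1$ (valid for $m=n-i-1$ precisely because $i\le n-2$), a direct subtraction gives $\ord_p\bigl(\vp^i(q)(\epsilon_n)\bigr)=1/p^{n-i-1}$; raising to the $(k-1)$-th power produces the $(2,1)$ entry of the claimed matrix.

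The main technical step is showing $\ord_p\bigl(\vp^i(\delta)(\epsilon_n)\bigr)=0$, equivalently that the denominator $\vp^i(q-\pi^{p-1})(\epsilon_n)$ has $\ord_p$ equal to $1$. Setting $y=\epsilon_{n-i}=(1+\pi)^{p^i}-1$ evaluated at $\pi=\epsilon_n$, this denominator equals
\[
\frac{(1+y)^p-1}{y}-y^{p-1}=\sum_{r=1}^{p-1}\binom{p}{r}y^{r-1}=p+\binom{p}{2}y+\cdots+\binom{p}{p-1}y^{p-2}.
\]
Each binomial coefficient $\binom{p}{r}$ for $1\le r\le p-1$ has $\ord_p=1$, while $\ord_p(y)=1/(p^{n-i-1}(p-1))>0$; hence the $r=1$ term has $\ord_p=1$ and every other term has strictly larger valuation. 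The whole sum therefore has $\ord_p=1$, giving $\ord_p\bigl(\vp^i(\delta)(\epsilon_n)\bigr)=\ord_p(p)-1=0$ as desired.

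Once these two valuations are in hand the $(1,1)$ and $(1,2)$ entries are immediate ($\ord_p(a_p)=v$ and $\ord_p(\varepsilon(p))=0$), and the lemma follows. The only subtle point is the $r=1$ term dominating uniformly in $n$ and $i$ in the sum above, which is the main obstacle; everything else is a direct substitution once one has the formula $\vp^j(\pi)(\epsilon_n)=\epsilon_{n-j}$.
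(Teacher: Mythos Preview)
Your argument is correct and follows the same outline as the paper's proof: compute $P^{-1}$, apply $\vp^i$, and evaluate each entry at $\epsilon_n$. The computation of $\ord_p\bigl(\vp^i(q)(\epsilon_n)\bigr)$ is identical to the paper's.

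The only difference is in handling $\delta$. You treat the verification that $\ord_p\bigl(\vp^i(\delta)(\epsilon_n)\bigr)=0$ as the ``main technical step'' and carry out an explicit binomial expansion. The paper bypasses this entirely: it was already recorded (just before the definition of $A$ and $P$) that $\delta=p/(q-\pi^{p-1})\in(\AQp)^\times$. Since $\vp$ is a ring endomorphism of $\AQp$, it follows that $\vp^i(\delta)\in(\AQp)^\times$ as well, and any unit of $\cO\lb\pi\rb$ evaluated at an element of positive valuation is automatically a $p$-adic unit. So the paper's proof is a one-liner at this point, whereas your explicit computation, while correct, is redundant given what has already been established.
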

\begin{proof}
Recall that \[
P=\begin{pmatrix}
0 & -\frac{\varepsilon(p)}{q^{k-1}}\\
\delta^{k-1}&\frac{a_p}{q^{k-1}}
\end{pmatrix},\]
so its inverse is given by
\[
P^{-1}=\begin{pmatrix}
\frac{a_p}{\delta^{k-1}\varepsilon(p)} & \frac{1}{\delta^{k-1}}\\
-\frac{q^{k-1}}{\varepsilon(p)}&0
\end{pmatrix}.
\]
Therefore, our result follows from the fact that $\delta\in\Zp^\times$,  $\varepsilon(p)\in\cO^\times$ and $\vp^i(q)$ is equal to the $p^{i+1}$-cyclotomic polynomial, so $\vp^i(q)(\epsilon_n)=\frac{\zeta_{p^{n-i-1}}-1}{\zeta_{p^{n-i}}-1}$ whose $p$-adic valuation is $1/p^{n-i-1}$.
\end{proof}

\begin{proposition}\label{prop:evaluateH}
Assume that $2v> \frac{k-1}{p}$. For all $n\ge1$,  $$\ord_p\left(H_n(\epsilon_n)\right)=
\begin{cases}
\begin{pmatrix}
v+\sum_{i=1}^{\frac{n-1}{2}}\frac{k-1}{p^{2i-1}}&\sum_{i=1}^{\frac{n-1}{2}}\frac{k-1}{p^{2i}}\\
\infty&\infty
\end{pmatrix}
&\text{if $n$ is odd.}\\
\begin{pmatrix}
\sum_{i=1}^{\frac{n}{2}}\frac{k-1}{p^{2i-1}}&v+\sum_{i=1}^{\frac{n}{2}-1}\frac{k-1}{p^{2i}}\\
\infty&\infty
\end{pmatrix}&
\text{if $n$ is even.}
\end{cases}
$$
\end{proposition}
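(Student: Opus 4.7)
The plan is to prove the proposition by induction on $n$, treating the two parities in parallel. The starting point is Lemma~\ref{lem:evaluationP}, which handles the factors $\vp^i(P^{-1})(\epsilon_n)$ with $1\le i\le n-2$; the remaining factor $\vp^{n-1}(P^{-1})(\epsilon_n)$ is special because $\vp^{n-1}(q)(\epsilon_n)=q(\epsilon_1)=(\zeta_p^p-1)/\epsilon_1=0$. A direct computation shows that this factor has bottom row identically zero, so its valuation matrix is $\begin{pmatrix} v & 0 \\ \infty & \infty \end{pmatrix}$. Since this is the \emph{leftmost} factor in the product defining $H_n$, the bottom row of $H_n(\epsilon_n)$ is also identically zero, which gives the $(\infty,\infty)$ entries in the second row of the claimed formula. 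It therefore remains to analyse the top row.

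For the inductive computation of the top row, the cleanest approach is to prove a slightly stronger statement describing the full $2\times 2$ matrix $H_n(\epsilon_{n+1})$ (rather than $H_n(\epsilon_n)$). The reason is the recursion
\[
 H_{n+1}(\epsilon_{n+1}) = \vp^{n}(P^{-1})(\epsilon_{n+1})\cdot H_{n}(\epsilon_{n+1}),
\]
and when we evaluate at $\epsilon_{n+1}$ the factors $\vp^i(P^{-1})(\epsilon_{n+1})$ for $1\le i\le n-1$ all fall within the range of Lemma~\ref{lem:evaluationP} (with parameter $n+1$ in place of $n$), so none of them are degenerate. Only the outermost factor $\vp^n(P^{-1})(\epsilon_{n+1})$ has zero bottom row, and multiplying on the left by it then collapses the bottom row and recombines the two rows of $H_{n}(\epsilon_{n+1})$ into the top row of $H_{n+1}(\epsilon_{n+1})$. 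Concretely, if
\[
H_n(\epsilon_{n+1})=\begin{pmatrix} A_n & B_n \\ C_n & D_n \end{pmatrix},
\]
then the top row of $H_{n+1}(\epsilon_{n+1})$ has valuations $\min(v+A_n,C_n)$ and $\min(v+B_n,D_n)$. The inductive step amounts to tracking $(A_n,B_n,C_n,D_n)$ and verifying that these minima produce the alternating pattern in the statement.

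The calculation of each individual matrix product $\vp^i(P^{-1})(\epsilon_n)\cdot(\cdots)$ is mechanical once one knows which summand dominates at each position. The hypothesis $2v>(k-1)/p$ is used repeatedly at this stage: combined with the trivial estimate $(k-1)/p>(k-1)/p^j$ for $j\ge 2$, it ensures that in every sum of the form $\alpha_i\alpha_j+\beta_i\gamma_j$ appearing in the $(1,1)$ entry of a partial product, the term $\beta_i\gamma_j$ strictly dominates, so the minimum of valuations is achieved uniquely and equality (not merely $\ge$) holds. This strictness is what allows the inductive hypothesis on valuations, not merely on lower bounds, to propagate.

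The main obstacle is the bookkeeping in the inductive step: because the dominant contribution alternates between the $(1,1)$ and $(1,2)$ positions as $n$ increments, one must keep track of two parallel cases (odd $n$ and even $n$) and check that the two minima $\min(v+A_n,C_n)$ and $\min(v+B_n,D_n)$ reproduce the telescoping sums $v+\sum(k-1)/p^{2i-1}$ and $\sum(k-1)/p^{2i}$ with the correct parity shift. Once the parity-indexed inductive hypothesis is set up with the right auxiliary data for $H_n(\epsilon_{n+1})$, the verification is routine; the initial case is read off directly from Lemma~\ref{lem:evaluationP} together with the degenerate computation of $\vp^{n-1}(P^{-1})(\epsilon_n)$ described above.
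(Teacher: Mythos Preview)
Your overall strategy (induction on $n$, identifying the degenerate leftmost factor, using $2v>(k-1)/p$ to guarantee strict minima so that valuations, not just lower bounds, propagate) is correct in spirit, but the inductive scheme you set up does not close. You propose to carry $\ord_p H_n(\epsilon_{n+1})$ as the inductive datum and use the literal identity
\[
 H_{n+1}(\epsilon_{n+1})=\vp^{n}(P^{-1})(\epsilon_{n+1})\cdot H_{n}(\epsilon_{n+1});
\]
this does give $\ord_p H_{n+1}(\epsilon_{n+1})$ from $\ord_p H_n(\epsilon_{n+1})$, but to continue the induction you now need $\ord_p H_{n+1}(\epsilon_{n+2})$, and there is no literal matrix relation between this and $H_n(\epsilon_{n+1})$: the evaluation point has shifted. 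So the quantities $(A_n,B_n,C_n,D_n)$ you introduce are not determined by $(A_{n-1},B_{n-1},C_{n-1},D_{n-1})$ via the recursion you wrote down.

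The paper closes this loop with a cleaner observation which makes your auxiliary object $H_n(\epsilon_{n+1})$ unnecessary. By Lemma~\ref{lem:evaluationP} together with your computation of the degenerate factor, the valuation matrix of the $j$-th factor of $H_n(\epsilon_n)$ (read left to right) depends only on $j$ and not on $n$; so
\[
 \ord_p H_n(\epsilon_n)=
 \begin{pmatrix} v&0\\ \infty&\infty\end{pmatrix}
 \begin{pmatrix} v&0\\ \tfrac{k-1}{p}&\infty\end{pmatrix}\cdots
 \begin{pmatrix} v&0\\ \tfrac{k-1}{p^{n-1}}&\infty\end{pmatrix}
\]
as a min-plus product, and the list for $H_{n+1}(\epsilon_{n+1})$ is the same list with one extra matrix appended on the \emph{right}. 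This gives the recursion
\[
 \ord_p H_{n+1}(\epsilon_{n+1})=\ord_p H_n(\epsilon_n)\cdot
 \begin{pmatrix} v&0\\ \tfrac{k-1}{p^{n}}&\infty\end{pmatrix},
\]
which is an identity of valuation matrices, \emph{not} of the matrices $H_n$ themselves (literally $H_{n+1}$ is obtained from $H_n$ by \emph{left} multiplication, and at a different evaluation point). With this right-multiplication recursion the induction goes through directly, using the hypothesis $2v>(k-1)/p$ exactly as you describe to resolve the minima.
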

\begin{proof}
By Lemma~\ref{lem:evaluationP}, we have
\[
\ord_p\left(H_n(\epsilon_n)\right)=
\begin{pmatrix}
v&0\\
\infty&\infty
\end{pmatrix}\begin{pmatrix}
v&0\\
\frac{k-1}{p}&\infty
\end{pmatrix}\cdots\begin{pmatrix}
v&0\\
\frac{k-1}{p^{n-1}}&\infty
\end{pmatrix}.
\]
In particular,
\begin{equation}\label{eq:induction}
\ord_p\left(H_{n+1}(\epsilon_{n+1})\right)=
\ord_p\left(H_n(\epsilon_n)\right)
\begin{pmatrix}
v&0\\
\frac{k-1}{p^n}&\infty
\end{pmatrix}.
\end{equation}
Therefore, 
\[
\ord_p(H_1(\epsilon_1))=\begin{pmatrix}
v&0\\
\infty&\infty
\end{pmatrix}\quad\text{and}\quad\ord_p(H_2(\epsilon_2))=\begin{pmatrix}
\frac{k-1}{p}&v\\
\infty&\infty
\end{pmatrix}
\]
since $2v>\frac{k-1}{p}$ by our assumption.

Suppose that \begin{align*}
\ord_p(H_{2\ell-1}(\epsilon_{2\ell-1}))&=\begin{pmatrix}
v+\sum_{i=1}^{\ell-1}\frac{k-1}{p^{2i}}&\sum_{i=1}^{\ell-1}\frac{k-1}{p^{2i-1}}\\
\infty&\infty
\end{pmatrix},\\
\ord_p(H_{2\ell}(\epsilon_{2\ell}))&=\begin{pmatrix}
\sum_{i=1}^\ell\frac{k-1}{p^{2i-1}}&v+\sum_{i=1}^{\ell-1}\frac{k-1}{p^{2i}}\\
\infty&\infty\end{pmatrix}
\end{align*}
for some integer $\ell\ge1$. By \eqref{eq:induction},
we have first of all
\[ 
\ord_p(H_{2\ell+1}(\epsilon_{2\ell+1}))=\begin{pmatrix}
v+\sum_{i=1}^{\ell}\frac{k-1}{p^{2i}}&\sum_{i=1}^{\ell}\frac{k-1}{p^{2i-1}}\\
\infty&\infty
\end{pmatrix}
\]
because $\sum_{i=1}^{\ell}\frac{k-1}{p^{2i}}<\sum_{i=1}^{\ell}\frac{k-1}{p^{2i-1}}$. On applying \eqref{eq:induction} again, we have
\[
\ord_p(H_{2\ell+2}(\epsilon_{2\ell+2}))=\begin{pmatrix}
\sum_{i=1}^{\ell+1}\frac{k-1}{p^{2i-1}}&v+\sum_{i=1}^{\ell}\frac{k-1}{p^{2i}}\\
\infty&\infty\end{pmatrix}
\]
thanks to our assumption that $2v>\frac{k-1}{p}$, which implies that
\[
2v+\sum_{i=1}^\ell\frac{k-1}{p^{2i}}>\sum_{i=1}^{\ell+1}\frac{k-1}{p^{2i-1}}.
\]
Therefore, our result follows from induction.
\end{proof}

For $i=1,2$, we fix two elements $F_1,F_2\in\cO\lb X\rb$ with  $\mu_i$ and $\lambda_i$ being its $\mu$- and $\lambda$-invariants.

\begin{corollary}Under the condition that $2v>\frac{k-1}{p}$, for $n\gg0$ we have the formulae
\[
\ord_{\epsilon_n}\left((\sH_{n+1})_{1,1}\cdot F_1(\epsilon_n)\right)=
\begin{cases}
\lambda_1+(p^n-p^{n-1})\left(\frac{\mu_1}{e}+v+\sum_{i=1}^{\frac{n-1}{2}}\frac{k-1}{p^{2i-1}}\right)&\text{$n$ odd,}\\
\lambda_1+(p^n-p^{n-1})\left(\frac{\mu_1}{e}+\sum_{i=1}^{\frac{n}{2}}\frac{k-1}{p^{2i-1}}\right)&\text{$n$ even,}
\end{cases}
\]
\[
\ord_{\epsilon_n}\left((\sH_{n+1})_{1,2}\cdot F_2(\epsilon_n)\right)=
\begin{cases}
\lambda_2+(p^n-p^{n-1})\left(\frac{\mu_2}{e}+\sum_{i=1}^{\frac{n-1}{2}}\frac{k-1}{p^{2i}}\right)&\text{$n$ odd,}\\
\lambda_2+(p^n-p^{n-1})\left(\frac{\mu_2}{e}+v+\sum_{i=1}^{\frac{n}{2}-1}\frac{k-1}{p^{2i}}\right)&\text{$n$ even.}
\end{cases}
\]
\end{corollary}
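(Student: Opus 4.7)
The plan is to split $\ord_{\epsilon_n}$ of the product additively, as
\[
 \ord_{\epsilon_n}\!\bigl((\sH_{n+1})_{1,j}(\epsilon_n)\cdot F_j(\epsilon_n)\bigr)
 =\ord_{\epsilon_n}\!\bigl((\sH_{n+1})_{1,j}(\epsilon_n)\bigr)+\ord_{\epsilon_n}\!\bigl(F_j(\epsilon_n)\bigr),
\]
and treat each summand separately for $j=1,2$.

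The $F_j$-piece is immediate from Lemma~\ref{lem:kob}(b): since $F_j\in\cO\lb X\rb$ is fixed with invariants $\lambda_j,\mu_j$, once $n$ exceeds a threshold depending only on $\lambda(F_j)$ we get
\[
 \ord_{\epsilon_n} F_j(\epsilon_n)=\lambda_j+\tfrac{(p^n-p^{n-1})\mu_j}{e}.
\]
This contributes precisely the $\lambda_j$ summand and the $\mu_j/e$ term inside the $(p^n-p^{n-1})$-factor of the claimed formula.

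For the $\sH_{n+1}$-piece, I would first convert to a $p$-adic valuation via $\ord_{\epsilon_n}(\cdot)=(p^n-p^{n-1})\ord_p(\cdot)$, and then compute $\ord_p(\sH_{n+1})_{1,j}(\epsilon_n)$ using the defining relation $\fM(\sH_{n+1})=(1+\pi)H_{n+1}$. Combined with the transfer identity $(1+\pi)\vp(G)(\epsilon_{n+1})=\zeta_{p^{n+1}}G(\epsilon_n)$ extracted from the proof of Lemma~\ref{lem:twist}, this rewrites the unknown $p$-adic valuation in terms of $\ord_p$ of an entry of $H_m(\epsilon_m)$ for the appropriate index $m$, and then Proposition~\ref{prop:evaluateH} supplies the explicit value. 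Its two cases according to parity become the odd/even cases of the corollary; the parity swap between $(\sH_{n+1})_{1,1}$ and $(\sH_{n+1})_{1,2}$ simply reflects the parity swap between the $(1,1)$- and $(1,2)$-entries of Proposition~\ref{prop:evaluateH} (with the factor of $v$ always attached to whichever entry has the dominant $v$-contribution on the relevant side of the tropical product).

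The main obstacle will be that Lemma~\ref{lem:twist} is stated for a \emph{fixed} power series, whereas the entries of $\sH_{n+1}$ vary with $n$; its threshold ``$n\gg 0$'' depends on the Iwasawa invariants of the input, so one cannot apply it naively as $n\to\infty$. The cleanest way around this is not to invoke the lemma as a black box, but rather to redo its proof with $G$ chosen as the explicit matrix product of Frobenius-translates of $P^{-1}$ whose valuation at $\epsilon_n$ is computed directly by the tropical matrix identity in Proposition~\ref{prop:evaluateH}; Proposition~\ref{prop:newtonpoly} then controls the error between $G(\epsilon_n)$ and $(\sH_{n+1})_{1,j}(\epsilon_n)$ uniformly for $n\gg 0$ under the standing hypothesis $2v>(k-1)/p$ (which is exactly what ensures the minimum in the tropical product is attained uniquely, so that the evaluation is the true valuation and not merely a lower bound).
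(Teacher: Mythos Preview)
Your proposal is correct and follows essentially the same route as the paper's proof, which is simply: ``By Lemma~\ref{lem:twist}, $\ord_p \sH_{n+1}(\epsilon_n) = \ord_p H_n(\epsilon_n)$; hence the result follows from combining Proposition~\ref{prop:evaluateH} with Lemma~\ref{lem:kob}(b).'' You split the product additively, handle the $F_j$-piece via Lemma~\ref{lem:kob}(b), and handle the $\sH_{n+1}$-piece via Lemma~\ref{lem:twist} and Proposition~\ref{prop:evaluateH}, exactly as the paper does.

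Your observation about the uniformity issue---that Lemma~\ref{lem:twist} is stated for a fixed power series while $\sH_{n+1}$ varies with $n$, so the threshold ``$n \gg 0$'' is \emph{a priori} $n$-dependent---is a point the paper simply glosses over. Your proposed remedy (unwinding the proof of Lemma~\ref{lem:twist} via Proposition~\ref{prop:newtonpoly} with the explicit factorisation $H_{n+1} = \vp(\vp^{n-1}(P^{-1}) \cdots P^{-1})$, and using the hypothesis $2v > (k-1)/p$ to ensure the tropical minima in Proposition~\ref{prop:evaluateH} are attained uniquely) is the right way to make this rigorous, and is more careful than the paper itself.
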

\begin{proof}
By Lemma~\ref{lem:twist}, $\ord_{p}\sH_{n+1}(\epsilon_n)=\ord_pH_n(\epsilon_n)$. Hence, our result follows from combining Proposition~\ref{prop:evaluateH} with Lemma~\ref{lem:kob}(b).
\end{proof}

\begin{corollary}\label{cor:modesty}
Suppose that $2v>\frac{k-1}{p}$. For $n\gg0$ and $n$ odd, we have
\begin{align*}
\ord_{\epsilon_n}\left((\sH_{n+1})_{1,1}\cdot F_1(\epsilon_n)\right)<\ord_{\epsilon_n}\left((\sH_{n+1})_{1,2}\cdot F_2(\epsilon_n)\right)&&\text{if $\frac{\mu_1}{e}+v+\frac{k-1}{p+1}\le\frac{\mu_2}{e}$}\\
\ord_{\epsilon_n}\left((\sH_{n+1})_{1,1}\cdot F_1(\epsilon_n)\right)>\ord_{\epsilon_n}\left((\sH_{n+1})_{1,2}\cdot F_2)(\epsilon_n)\right)&&\text{if $\frac{\mu_1}{e}+v+\frac{k-1}{p+1}>\frac{\mu_2}{e}$}.
\end{align*}
For $n\gg 0$ and $n$ even, we have
\begin{align*}
\ord_{\epsilon_n}\left((\sH_{n+1})_{1,1}\cdot F_1(\epsilon_n)\right)<\ord_{\epsilon_n}\left((\sH_{n+1})_{1,2}\cdot F_2(\epsilon_n)\right)&&\text{if $\frac{\mu_1}{e}<\frac{\mu_2}{e}+v+\frac{k-1}{p+1}$}\\
\ord_{\epsilon_n}\left((\sH_{n+1})_{1,1}\cdot F_1(\epsilon_n)\right)>\ord_{\epsilon_n}\left((\sH_{n+1})_{1,2}\cdot F_2(\epsilon_n)\right)&&\text{if $\frac{\mu_1}{e}\ge\frac{\mu_2}{e}+v+\frac{k-1}{p+1}$}.
\end{align*}
\end{corollary}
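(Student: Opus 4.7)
The plan is to subtract the two explicit formulas supplied by the preceding corollary and read off the sign for $n \gg 0$. Write $A_n := \ord_{\epsilon_n}\bigl((\sH_{n+1})_{1,1}\cdot F_1(\epsilon_n)\bigr)$ and $B_n := \ord_{\epsilon_n}\bigl((\sH_{n+1})_{1,2}\cdot F_2(\epsilon_n)\bigr)$. Each has the shape $\lambda_i + (p^n - p^{n-1})\bigl(\tfrac{\mu_i}{e} + \varepsilon_i v + T_i(n)\bigr)$, where $T_i(n)$ is a partial geometric sum depending on the parity of $n$, and $\varepsilon_i \in \{0,1\}$ records whether the extra $v$-term is present: by Proposition~\ref{prop:evaluateH}, $(\varepsilon_1,\varepsilon_2) = (1,0)$ for odd $n$ and $(0,1)$ for even $n$. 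Hence
\[
 B_n - A_n = (\lambda_2 - \lambda_1) + (p^n - p^{n-1})\, c_n,
\]
where $c_n$ assembles $(\mu_2 - \mu_1)/e$, $\pm v$, and the difference $T_2(n) - T_1(n)$ of the two partial sums.

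The main computation is then just to evaluate the limit of that difference of partial sums. Using the elementary identities $\sum_{i \ge 1} p^{-(2i-1)} = \tfrac{p}{p^2-1}$ and $\sum_{i \ge 1} p^{-2i} = \tfrac{1}{p^2-1}$, the infinite-sum difference is $\tfrac{1}{p+1}$, so (after the factor $k-1$) one obtains $|T_2(n) - T_1(n)| \to \tfrac{k-1}{p+1}$. Substituting, $\lim_{n\to\infty} c_n$ takes the form $\tfrac{\mu_2 - \mu_1}{e} \pm v \mp \tfrac{k-1}{p+1}$, with the signs dictated by the parity of $n$ and the position of the $v$-entry. This limit is positive precisely at the threshold stated in the corollary. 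Because $(p^n - p^{n-1}) \to \infty$, whenever $\lim c_n \ne 0$ the sign of $B_n - A_n$ for $n \gg 0$ is completely controlled by the sign of this limit, and the constant contribution $\lambda_2 - \lambda_1$ is absorbed into the $O(1)$ error.

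The only remaining book-keeping is to track the direction of approach: each partial sum $T_i(n)$ is manifestly monotone in $n$, so the sequence $c_n$ itself approaches its limit monotonically (from above in one parity, from below in the other). This monotonicity lets one upgrade the asymptotic sign of $c_n$ to the strict inequality $A_n < B_n$ or $A_n > B_n$ claimed in the statement. The four listed cases then drop out immediately from the analysis in each parity. No step is genuinely hard; the only place where some care is required is the degenerate situation in which $\lim c_n = 0$ exactly (the boundary of the threshold), where an explicit evaluation shows $(p^n - p^{n-1})c_n$ tends to the constant $(p-1)(k-1)/(p+1)$, so that the conclusion depends in principle on the $\lambda$-invariants --- but this boundary case lies outside the strict inequalities that the statement asks us to verify.
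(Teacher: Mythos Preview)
Your approach is essentially identical to the paper's, which simply observes that the two partial-sum differences
\[
\sum_{i=1}^{\frac{n-1}{2}}\frac{k-1}{p^{2i-1}}-\sum_{i=1}^{\frac{n-1}{2}}\frac{k-1}{p^{2i}}
\quad\text{and}\quad
\sum_{i=1}^{\frac{n}{2}}\frac{k-1}{p^{2i-1}}-\sum_{i=1}^{\frac{n}{2}-1}\frac{k-1}{p^{2i}}
\]
are positive, strictly monotone, and tend to $\tfrac{k-1}{p+1}$, and then says ``Hence the result.'' Your write-up is in fact more detailed than the paper's at every step.

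There is one slip. You assert that the boundary case $\lim c_n = 0$ ``lies outside the strict inequalities that the statement asks us to verify,'' but this is not so: two of the four clauses in the corollary carry the non-strict sign $\le$ or $\ge$, so the threshold \emph{is} part of what must be proved. The paper's handling of this is exactly the strict-monotonicity observation you already recorded: because the partial-sum difference approaches $\tfrac{k-1}{p+1}$ strictly from one side, the coefficient $c_n$ keeps a definite nonzero sign even when its limit is~$0$. So rather than discarding the boundary case you should invoke your own monotonicity remark to cover it; your computation that $(p^n-p^{n-1})c_n$ is then the positive constant $\tfrac{(p-1)(k-1)}{p+1}$ makes this explicit and is a useful addition to the paper's terse argument.
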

\begin{proof}
Note that
\[
\sum_{i=1}^{\frac{n-1}{2}}\frac{k-1}{p^{2i-1}}-\sum_{i=1}^{\frac{n-1}{2}}\frac{k-1}{p^{2i}}>0\quad\text{and}
\quad\sum_{i=1}^{\frac{n}{2}}\frac{k-1}{p^{2i-1}}-\sum_{i=1}^{\frac{n}{2}-1}\frac{k-1}{p^{2i}}>0
\]
and that both sequences are strictly increasing and tend to $\frac{k-1}{p+1}$ as $n\rightarrow\infty$. Hence the result.
\end{proof}

\subsection{Some global Iwasawa modules}

 For $n \ge 0$ let us write $K_n = \QQ(\mu_{p^n})$.

 \begin{definition}[{cf.~\cite[\S 12.2]{kato04}}]
  For $m \ge 0$, we define
  \[ \HH^m(T) \coloneqq \varprojlim_n H^m_{\text{\textup{\'et}}}\Big(\Spec \cO_{K_n}[1/p], j_* T\Big),\]
  where the inverse limit is respect to the corestriction maps, and $j$ is the inclusion map $\Spec K_n \hookrightarrow \Spec \cO_{K_n}[1/p]$.
 \end{definition}
 
 By \cite[12.4(3)]{kato04}, the modules $\HH^m(T)$ are finitely-generated over $\Lambda$, and are zero unless $m \in \{1, 2\}$; and $\HH^1(T)$ is free of rank 1 over $\Lambda$. We fix an element $\be\in\HH^1(T)$ so that $\HH^1(T)=\Lambda\cdot \be$. Tensoring with the basis vector $e_{k - 1 - j}$ of $\cO(k - 1 - j)$ gives a bijection
 \[ \HH^1(T) \cong \HH^1(\T), \]
 and (in a slight abuse of notation) we shall write $\Col_i(\be)$ for the image of $\be\cdot e_{k-1-j}$ under $\Col_i$ composed with the localization map $\HH^1(\T)\rightarrow \HIw(\Qp(\mu_{p^\infty}),\T)$.
 
 \begin{definition}\label{defn:modesty}
 For $i=1,2$ and $\eta$ a Dirichlet character modulo $p$. Let $\mu_i^\eta$ be the $\mu$-invariant of $\Col_i(\be)^\eta$. For each $n\ge1$, we define an integer $\tau(n,\eta)\in\{1,2\}$ by
 \[
 \begin{cases}
 1&\text{if $\frac{\mu_1^\eta}{e}+v+\frac{k-1}{p+1}\le\frac{\mu_2^\eta}{e}$ and $n$ odd or $\frac{\mu_1^\eta}{e}<\frac{\mu_2^\eta}{e}+v+\frac{k-1}{p+1}$ and $n$ even,}\\
 2&\text{otherwise.}
 \end{cases}
 \]
 Furthermore, we write $q_{n}^*=\ord_{\epsilon_n}\left((\sH_{n+1})_{1,\tau(n,\eta)}(\epsilon_n)\right)$.
 \end{definition}
 
 Note in particular that $q_n^*$ is a sum of some powers of $p$, together with possibly $v$, as given by Proposition~\ref{prop:evaluateH}. Furthermore, Corollary~\ref{cor:modesty} tells us that
 \begin{equation}\label{eq:valofz}
 \ord_{\epsilon_n}\left(\sum_{i=1}^2(\sH_{n+1})_{1,i}\cdot\Col_i(\be)^\eta(\epsilon_n)\right)=q_n^*+\ord_{\epsilon_n}\Col_{\tau(n,\eta)}(\be)^{\eta}(\epsilon_n).
 \end{equation}

\subsection{Analysis of some local Iwasawa modules}

For $n\ge1$, we define
\[
\Xloc(\QQ(\mu_{p^n}))=\coker\left(\HH^1(T)_{\Gamma_n}\rightarrow H^1_{/f}(\Qp(\mu_{p^n}),T)\right),
\]
which gives an inverse limit with the connecting maps given by the corestriction maps. We would like to study $\nabla \Xloc(\QQ(\mu_{p^{n+1}}))^\eta$ for a fixed Dirichlet character $\eta$ modulo $p$.

\begin{proposition}\label{pro:evalXloc}
Suppose that $\uCol_1(\be)^\eta$ and $\uCol_2(\be)^\eta$ are non-zero. For $n\gg0$, $\nabla \Xloc(\QQ(\mu_{p^{n+1}}))^\eta$ is defined, and its value is bounded above by
\[\nabla_n\Xloc^\eta\le eq_n^*+\nabla_n(\cO\lb X\rb/\Col_{\tau(n,\eta)}(\be)^\eta).\]
\end{proposition}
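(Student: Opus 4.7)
The plan is to use the principal-ideal structure of $\HH^1(T)$, together with the near-isomorphism of Corollary~\ref{cor:injection}, to transfer the computation of $\nabla_n\Xloc^\eta$ into a $p$-adic valuation calculation for a power series. Since $\HH^1(T)=\Lambda\cdot\be$ is free of rank one, $\HH^1(T)_{\Gamma_{n+1}}$ is canonically identified with $\Lambda_{n+1}$, and the composite
\[
\HH^1(T)_{\Gamma_{n+1}}\longrightarrow H^1(\Qpnn,T)\stackrel{\uCol_{T,n+1,u}}{\longrightarrow}\Lambda_{n+1}
\]
(where the first arrow is surjective thanks to the vanishing of $H^2_{\Iw}$) is exactly multiplication by $\uCol_{T,n+1,u}(\be)\in\Lambda_{n+1}$. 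By Corollary~\ref{cor:injection} applied at level $n+1$, this yields an injection
\[
\Xloc(\QQ(\mu_{p^{n+1}}))\hookrightarrow \Lambda_{n+1}/\uCol_{T,n+1,u}(\be)\Lambda_{n+1}
\]
with finite cokernel. Passing to $\eta$-isotypic components, I would obtain an injection
\[
\Xloc(\QQ(\mu_{p^{n+1}}))^\eta\hookrightarrow\cO\lb X\rb/\bigl(\omega_n(X),\,\uCol_{T,n+1,u}(\be)^\eta\bigr),
\]
whose cokernel is still finite.

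Next, I would combine the explicit formula~\eqref{eq:formulaColTnu} for $\uCol_{T,n+1,u}$ with the second part of Lemma~\ref{lem:twist} (which shows that $\Tw^{k-1-j}$ does not alter $\ord_{\epsilon_n}$-valuations for $n$ large) and equation~\eqref{eq:valofz} to conclude that
\[
\ord_{\epsilon_n}\uCol_{T,n+1,u}(\be)^\eta(\epsilon_n) = q_n^* + \ord_{\epsilon_n}\Col_{\tau(n,\eta)}(\be)^\eta(\epsilon_n)
\]
for all sufficiently large $n$. The hypothesis that $\Col_1(\be)^\eta$ and $\Col_2(\be)^\eta$ are non-zero guarantees that the right-hand side is finite, so Lemma~\ref{lem:kob}(a) applies and gives $\nabla_n$ of the ambient quotient as $e$ times the valuation above. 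Splitting off $eq_n^*$ and applying Lemma~\ref{lem:kob}(b) to $\cO\lb X\rb/\Col_{\tau(n,\eta)}(\be)^\eta$ rewrites this as
\[
\nabla_n\!\Bigl(\cO\lb X\rb/(\omega_n(X),\uCol_{T,n+1,u}(\be)^\eta)\Bigr) = eq_n^* + \nabla_n\bigl(\cO\lb X\rb/\Col_{\tau(n,\eta)}(\be)^\eta\bigr).
\]

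The principal obstacle, and the reason the statement is only an inequality, is the finite cokernel produced by Corollary~\ref{cor:injection}. To turn this into the upper bound claimed, I would argue that because $\Xloc(\QQ(\mu_{p^{n+1}}))^\eta$ is a \emph{sub}module (with finite quotient) of the ambient $\cO\lb X\rb/(\omega_n(X),\uCol_{T,n+1,u}(\be)^\eta)$, Lemma~\ref{lem:finitekob} applied to both finite inverse systems, combined with the additivity of $\cO$-length in the short exact sequence
\[
0\longrightarrow \Xloc^\eta_n\longrightarrow \cO\lb X\rb/(\omega_n(X),\uCol_{T,n+1,u}(\be)^\eta)\longrightarrow Q_n\longrightarrow 0,
\]
expresses $\nabla_n\Xloc^\eta$ as the difference between the Kobayashi rank of the ambient module and the (asymptotically non-negative) rank of the cokernel system $Q_n$. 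Since $Q_n$ is finite, its contribution can only lower $\nabla_n$, producing the desired inequality. The fact that the transition maps on $\Xloc^\eta$ themselves have finite kernel and cokernel for $n\gg 0$, so that $\nabla_n\Xloc^\eta$ is defined in the first place, follows from the corresponding property on the ambient finite quotient and the finiteness of $Q_n$.
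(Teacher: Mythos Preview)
Your overall strategy matches the paper's: embed $\Xloc(\Qpnn)$ into $\Lambda_{n+1}/(\uCol_{T,n+1,u}(\be))$ via Corollary~\ref{cor:injection}, evaluate the Kobayashi rank of the ambient quotient through the valuation formula~\eqref{eq:valofz}, and bound using the cokernel. However, the final paragraph has a genuine gap. The claim ``since $Q_n$ is finite, its contribution can only lower $\nabla_n$'' does not follow from finiteness alone: by Lemma~\ref{lem:finitekob} one has $\nabla Q_n=\len Q_n-\len Q_{n-1}$, and nothing prevents this from being negative unless the transition map is surjective. You have also not specified what the transition maps on your ambient system are, since the element $\uCol_{T,n+1,u}(\be)^\eta$ varies with $n$.

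The paper resolves both issues by a step you omit. Rather than using a different Coleman map at each level, it applies the \emph{same} map $\uCol_{T,n+1,u}$ at level $n$ as well, by taking $\Gamma_n$-coinvariants of the level-$(n+1)$ injection $H^1_{/f}(\Qpnn,T)\hookrightarrow\Lambda_{n+1}$. This yields a genuine commutative square whose right-hand vertical is the natural surjection $\Lambda_{n+1}\to\Lambda_n$; the induced transition $\pi:\coker_{n+1}\to\coker_n$ is therefore surjective, so $\nabla\coker_{n+1}^\eta=\len(\ker\pi^\eta)\ge 0$, and the inequality follows from additivity of $\nabla$ on the resulting short exact sequence of $\Xloc$, $\Lambda_{n+1}/(\uCol_{T,n+1,u}(\be))$, and $\coker_{n+1}$. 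Incidentally, your parenthetical that $\HH^1(T)_{\Gamma_{n+1}}\to H^1(\Qpnn,T)$ is surjective is false---this is localization from global to local cohomology---but also unnecessary: all you need is that its image in $\Lambda_{n+1}$ is the principal ideal generated by $\uCol_{T,n+1,u}(\be)$, which follows from $\Lambda_{n+1}$-linearity.
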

\begin{proof}
Recall from Corollary~\ref{cor:injection}, we have the injection
\[
\uCol_{T,n+1,u}:H^1_{/f}(\Qpnn,T)\hookrightarrow \Lambda_{n+1}.
\]
On taking $\Gamma_{n}$-coinvariants, the \textit{same} map (\textit{not} $\uCol_{T,n,u}$) induces an injection
\[
\uCol_{T,n+1,u}:H^1_{/f}(\Qpn,T)\hookrightarrow \Lambda_{n},
\]
which admits the same description as \eqref{eq:formulaColTnu}. We write $\coker_{n+1}$ and $\coker_{n}$ for the cokernels of these two maps respectively. Then, we have the commutative diagram
\[
\begin{CD}
0 @>>> H^1_{/f}(\Qpnn,T) @>\uCol_{T,n+1,u}>>\Lambda_{n+1} @>>>\coker_{n+1}@>>>0\\
@.  @VVV @VVV @V \pi VV\\
0 @>>> H^1_{/f}(\Qpn,T) @>\uCol_{T,n+1,u}>> \Lambda_{n}@>>>\coker_{n}@>>>0,
\end{CD}
\]
where the vertical maps are all natural projections. This gives
\[
\begin{CD}
0 @>>> \Xloc(\Qpnn) @>>>\Lambda_{n+1}/(\uCol_{T,n+1,u}(\be)) @>>>\coker_{n+1}@>>>0\\
@.  @VVV @VVV @VV \pi V\\
0 @>>> \Xloc(\Qpn) @>>> \Lambda_{n}/(\uCol_{T,n+1,u}(\be))@>>>\coker_{n}@>>>0.
\end{CD}
\]
Recall from Corollary~\ref{cor:injection} that $\coker_{n+1}$ is finite (in particular, $\coker_{n}$ too). Hence, on taking $\eta$-isotypic components, $\nabla\coker_{n+1}^\eta$ (with respect to $\pi$) is defined. In fact, it is given by $\len(\ker\pi^\eta)$, which is $\ge0$.

Furthermore, recall that we assume $\Col_i(\be)^\eta\ne0$ for $i=1,2$. Proposition~\ref{prop:evaluateH} tells us that the second row of $\sH_{n+1}(\epsilon_n)$ is $0$. So, the formulae \eqref{eq:formulaColTnu} and \eqref{eq:valofz} imply that $\uCol_{T,n+1,u}(\be)(\epsilon_n)\ne0$ when $n\gg0$. Hence, $\nabla\left(\Lambda_{n+1}/(\uCol_{T,n+1,u}(\be))\right)^\eta=\nabla_n \left(\cO\lb X \rb/\uCol_{T,n+1,u}(\be)^\eta\right)$ is defined. Its value is given by
\[
eq_n^*+\nabla_n(\cO\lb X\rb/\Col_{\tau(n,\eta)}(\be)^\eta),
\]
thanks to Lemma~\ref{lem:kob}.

Therefore, the fact that the Kobayashi rank $\nabla$ respects short exact sequences (\cite[Lemma~10.4]{kobayashi03}) tells us that $\nabla \Xloc(\Qpnn)^\eta$ is defined and its value is equal to
\[
\nabla_n \left(\cO\lb X \rb/\uCol_{T,n+1,u}(\be)^\eta\right)-\len(\ker\pi^\eta).
\]
Hence the result.
\end{proof}

This can be considered as a weakened version of the modesty proposition  \cite[Proposition~3.10]{sprung12}. In the $k=2$ case, equality holds because the projection $\pi$ turns out to be an injection (see \cite[Lemma~10.7]{kobayashi03} and \cite[Lemma~4.12]{sprung12}).

\section{Selmer groups and Shafarevich--Tate groups}

\subsection{Signed Selmer groups}
Let $T^\vee$ be the Pontryagin dual of $T$. As in \cite{leiloefflerzerbes10}, the Coleman maps allow us to define the Selmer groups
\[
\Sel_i(T^\vee/\QQ(\mu_{p^\infty}))=\ker\left(\Sel(T^\vee/\QQ(\mu_{p^\infty}))
\rightarrow\frac{H^1(\Qp(\mu_{p^\infty}),T^\vee)}{\ker(\Col_i)^\perp}\right)\]
for $i=1,2$. Here $\Sel(T^\vee/\QQ(\mu_{p^\infty}))$ is the Bloch--Kato Selmer group from \cite{blochkato90}. We shall write $\X(\QQ(\mu_{p^n}))=\Sel(T^\vee/\QQ(\mu_{p^n}))^\vee$ for $n\ge1$.

Let $\X_i$ be the Pontryagin dual of $\Sel_i(T^\vee/\QQ(\mu_{p^\infty}))$. We subsequently assume that for any Dirichlet character $\eta$ that factors through $\Gal(\QQ(\mu_p)/\QQ)$, both $\X_1^\eta$ and $\X_2^\eta$ are $\cO\lb X\rb$-torsion. Note that this is the case if either $k\ge3$ or $a_p=0$ by \cite[Theorem~6.5]{leiloefflerzerbes10}. In particular, $\nabla_n\X_i^\eta$ are defined for $n\gg0$ by Lemma~\ref{lem:kob}(c).

We have the Poitou-Tate exact sequence (see for example \cite[(61)]{leiloefflerzerbes10})
\begin{equation}\label{eq:PT}
\HH^1(T)\rightarrow\image\Col_i \rightarrow \X_i\rightarrow\X_0\rightarrow0,
\end{equation}
where $\X_0$ is $\HH^2(T)$ and can be realized as the Pontryagin dual of the zero Selmer group $\Sel_0(T^\vee/\QQ(\mu_{p^\infty}))$, which is 
defined to be
\[
\ker\left(H^1(\QQ(\mu_{p^\infty}),T^\vee)\rightarrow\prod_v H^1(\QQ(\mu_{p^\infty})_v,T^\vee)\right),
\]
where $v$ runs through all places of $\QQ(\mu_{p^\infty})$. Note that  $\X_0$ is
a torsion $\Lambda$-module by \cite[Theorem~12.4]{kato04} and hence $\nabla_n\X_0^\eta$ is defined for $n\gg0$ by Lemma~\ref{lem:kob}(c). Note that \eqref{eq:PT} gives the short exact sequence
\[
0\rightarrow\frac{\image\Col_i}{(\Col_i(\be))} \rightarrow \X_i\rightarrow\X_0\rightarrow0.
\]
Hence, our assumption that $\X_i^\eta$ be torsion implies that $\Col_i(\be)^\eta\ne0$. In particular, Proposition~\ref{pro:evalXloc} applies.

Recall from \cite[\S5]{leiloefflerzerbes11} that $\image\Col_i^\eta$ is pseudo-isomorphic to $\prod_{m}(X-\chi(\gamma)^m+1)\cO\lb X\rb$, where $m$ runs through some subset of $\{0,1,\ldots, k-2\}$ depending on $i$ and $\eta$. Let us write $\kappa_i(\eta)$ for the cardinality of this subset and write $\kappa(n,\eta)=\kappa_{\tau(n,\eta)}(\eta)$. We have the following generalization of \cite[Proposition~3.11]{sprung12}.

\begin{proposition}
\label{prop:pt}
For $i=1,2$, $\eta$ any Dirichlet character modulo $p$ and $n\gg0$,
\[
\nabla_n\X_i^\eta=\nabla_n(\Lambda/\Col_i(\be))^\eta+\nabla_n\X_0^\eta-e\kappa_i(\eta).
\]
\end{proposition}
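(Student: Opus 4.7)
The plan is to extract the identity directly from the Poitou--Tate short exact sequence
\[
0\to \frac{\image \Col_i}{(\Col_i(\be))}\to \X_i\to \X_0 \to 0
\]
displayed just above the proposition. Passing to the $\eta$-isotypic component preserves exactness. Under the standing hypotheses all three modules are $\cO\lb X\rb$-torsion: $\X_i^\eta$ by assumption, $\X_0^\eta$ as already noted, and the leftmost term because $\Col_i(\be)^\eta\neq 0$ (which was remarked just above the proposition as a consequence of the very same Poitou--Tate sequence). Hence Lemma~\ref{lem:kob}(c) guarantees that $\nabla_n$ is defined for each term when $n\gg 0$, and additivity of $\nabla$ on short exact sequences (\cite[Lemma~10.4]{kobayashi03}) produces
\[
\nabla_n\X_i^\eta=\nabla_n\!\left(\frac{\image\Col_i^\eta}{(\Col_i(\be)^\eta)}\right)+\nabla_n\X_0^\eta.
\]

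To identify the first summand, I would view $\image\Col_i^\eta$ as its image in $\Lambda^\eta\cong\cO\lb X\rb$, and insert the short exact sequence
\[
0\to \frac{\image\Col_i^\eta}{(\Col_i(\be)^\eta)}\to \frac{\cO\lb X\rb}{(\Col_i(\be)^\eta)}\to \frac{\cO\lb X\rb}{\image\Col_i^\eta}\to 0,
\]
in which every module is torsion. A second application of additivity of $\nabla$ yields
\[
\nabla_n\!\left(\frac{\image\Col_i^\eta}{(\Col_i(\be)^\eta)}\right)=\nabla_n(\Lambda/\Col_i(\be))^\eta-\nabla_n\!\left(\frac{\cO\lb X\rb}{\image\Col_i^\eta}\right).
\]
For the last term, the recalled pseudo-isomorphism $\image\Col_i^\eta\sim\prod_m(X-\chi(\gamma)^m+1)\cO\lb X\rb$ from \cite[\S5]{leiloefflerzerbes11}, with the product running over a set of cardinality $\kappa_i(\eta)$, implies that $\cO\lb X\rb/\image\Col_i^\eta$ has characteristic ideal generated by the distinguished polynomial $\prod_m(X-\chi(\gamma)^m+1)$ of degree $\kappa_i(\eta)$, whose $\mu$-invariant vanishes. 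Lemma~\ref{lem:kob}(c) then evaluates $\nabla_n(\cO\lb X\rb/\image\Col_i^\eta)=e\kappa_i(\eta)$, and combining the three identities gives the stated formula.

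The main obstacle is the bookkeeping around pseudo-isomorphism versus genuine isomorphism, and verifying that every module in sight really is torsion so that $\nabla_n$ is defined and additivity along the chosen sequences can be applied. Once one notes that $\nabla_n$ of a torsion $\cO\lb X\rb$-module depends only on its characteristic ideal (Lemma~\ref{lem:kob}(c)), the passage from the abstract image $\image\Col_i^\eta$ to the explicit ideal generated by $\prod_m(X-\chi(\gamma)^m+1)$ is harmless, and the argument becomes a purely formal accounting parallel to Sprung's \cite[Proposition~3.11]{sprung12}.
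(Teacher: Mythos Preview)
Your proposal is correct and follows essentially the same route as the paper: both arguments combine the Poitou--Tate short exact sequence with the inclusion $(\Col_i(\be)^\eta)\subset\image\Col_i^\eta\subset\cO\lb X\rb$ and then invoke additivity of $\nabla_n$ together with the pseudo-isomorphism from \cite[\S5]{leiloefflerzerbes11}. The only cosmetic difference is that the paper writes the second step as a four-term sequence with an explicit finite error term $G$ (handled via $\nabla_n G=0$), whereas you absorb this into Lemma~\ref{lem:kob}(c) by observing that $\nabla_n$ depends only on the characteristic ideal.
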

\begin{proof}
The following sequence 
\[
0\rightarrow \left(\frac{\image(\Col_i)}{\Col_i(\be)}\right)^\eta\rightarrow\left(\frac{\Lambda}{\Col_i(\be)}\right)^\eta\rightarrow\frac{\cO\lb X\rb}{\prod_{m}(X-\chi(\gamma)^m+1)\cO\lb X\rb} \rightarrow G\rightarrow 0
\]
is exact, where $G$ is some finite subgroup. In particular, $\nabla_nG=0$ for $n\gg0$. We may work out the Kobayashi rank of the second last term using Lemma~\ref{lem:kob}(b). Recall from \cite[Lemma~10.4]{kobayashi03} that Kobayashi ranks respect exact sequences, therefore,
\[
\nabla_n\left(\frac{\image(\Col_i)}{\Col_i(\be)}\right)^\eta+e\kappa_i(\eta)=\nabla_n\left(\frac{\Lambda}{\Col_i(\be)}\right)^\eta.
\]

From \eqref{eq:PT}, we have furthermore the following exact sequence
\[
0\rightarrow \frac{\image(\Col_i)}{\Col_i(\be)}\rightarrow\X_i\rightarrow\X_0\rightarrow0,
\]
which implies that
\[
\nabla_n\left(\frac{\image(\Col_i)}{\Col_i(\be)}\right)^\eta+\nabla_n\X_0^\eta=\nabla_n\X_i^\eta.
\]
Combing the two equations gives our result.
\end{proof}

\begin{remark}
Let $\mu_0^\eta$ be the $\mu$-invariant of $\X_0^\eta$. For $i=1,2$, let $\tilde{\mu}_i^\eta$ be the $\mu$-invariant of $\X_i^\eta$. Then, Proposition~\ref{prop:pt} implies that $\tilde{\mu}_i^\eta=\mu_i^\eta-\mu_0^\eta$. In particular, $\mu_1^\eta-\mu_2^\eta=\tilde{\mu}_1^\eta-\tilde{\mu}_2^\eta$. Therefore, we may replace $\mu_1^\eta$ and $\mu_2^\eta$ by $\tilde{\mu}_1^\eta$ and $\tilde{\mu}_2^\eta$ respectively in Definition~\ref{defn:modesty}. In other words, we may define $\tau(n,\eta)$ using the $\mu$-invariants of the dual Selmer groups $\X_i$, instead of $\Col_i(\be)$.
\end{remark}

\begin{corollary}\label{cor:selmergrowth}
For $n\gg0$, $\nabla\X(\QQ(\mu_{p^{n+1}}))^\eta$ is defined. Furthermore, its value is bounded above by
\[
e q_n^*+\nabla_n\X_{\tau(n,\eta)}^\eta+e\kappa(n,\eta).
\]
\end{corollary}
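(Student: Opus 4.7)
The plan is to combine the local upper bound of Proposition~\ref{pro:evalXloc} with the structural identity of Proposition~\ref{prop:pt}, connecting them through a Poitou--Tate exact sequence that relates the global Selmer dual $\X(\QQ(\mu_{p^{n+1}}))$ to the local quotient $\Xloc(\QQ(\mu_{p^{n+1}}))$ and to $\X_0$.

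First, I would descend the Iwasawa-level sequence \eqref{eq:PT}, with $\Col_i$ replaced by the full localisation $\HH^1(T)\to H^1_{\Iw,/f}(\Qp(\mu_{p^\infty}),T)$ underlying $\Xloc$, to level $n+1$ by taking $\Gamma_n$-coinvariants. This produces a sequence of the form
\[
0\to \Xloc(\QQ(\mu_{p^{n+1}}))^\eta \to \X(\QQ(\mu_{p^{n+1}}))^\eta \to (\X_0/\omega_n(X)\X_0)^\eta \to 0,
\]
valid up to finite error terms arising from places away from $p$ and from the homological defect in the coinvariant descent. Using the hypothesis $H^0(\Qp(\mu_{p^\infty}), T/\varpi T)=0$, these errors stabilise to isomorphic inverse systems, hence have vanishing Kobayashi rank for $n\gg 0$. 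By additivity of $\nabla$ on short exact sequences (\cite[Lemma~10.4]{kobayashi03}),
\[
\nabla\X(\QQ(\mu_{p^{n+1}}))^\eta \;=\; \nabla\Xloc(\QQ(\mu_{p^{n+1}}))^\eta + \nabla_n\X_0^\eta
\]
is defined for $n\gg 0$.

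Next, Proposition~\ref{pro:evalXloc} yields
\[
\nabla\Xloc(\QQ(\mu_{p^{n+1}}))^\eta \;\le\; eq_n^* + \nabla_n\bigl(\cO\lb X\rb/\Col_{\tau(n,\eta)}(\be)^\eta\bigr),
\]
while Proposition~\ref{prop:pt}, specialised to $i=\tau(n,\eta)$ and rearranged, gives
\[
\nabla_n\bigl(\cO\lb X\rb/\Col_{\tau(n,\eta)}(\be)^\eta\bigr) \;=\; \nabla_n\X_{\tau(n,\eta)}^\eta - \nabla_n\X_0^\eta + e\kappa(n,\eta).
\]
Substituting the second identity into the first and adding $\nabla_n\X_0^\eta$ to both sides cancels the $\X_0$-contribution, producing the asserted upper bound.

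The main obstacle is the clean formulation of the finite-level Poitou--Tate sequence in step one: one must carefully track how the Iwasawa-level sequence \eqref{eq:PT} descends under $\Gamma_n$-coinvariants, verifying that the contributions from primes away from $p$ and from Tor terms stabilise to inverse systems with isomorphic transition maps, so as to be invisible to the Kobayashi rank. Once that descent is in place, the remainder of the argument is a direct algebraic combination of Propositions~\ref{pro:evalXloc} and~\ref{prop:pt}.
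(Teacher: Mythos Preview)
Your overall strategy matches the paper's: establish the identity
\(\nabla\X(\QQ(\mu_{p^{n+1}}))^\eta=\nabla\Xloc(\QQ(\mu_{p^{n+1}}))^\eta+\nabla_n\X_0^\eta\)
and then combine Propositions~\ref{pro:evalXloc} and~\ref{prop:pt} exactly as you do. The second half of your argument is identical to the paper's.

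The one point where the paper proceeds differently is your first step. You propose to descend an Iwasawa-level variant of \eqref{eq:PT} with $\Col_i$ replaced by the ``full localisation'' to $H^1_{\Iw,/f}$. In the non-ordinary setting this object is not well-behaved over $\Qp(\mu_{p^\infty})$: there is no clean $\Lambda$-module $H^1_{\Iw,/f}$ to map into, so there is no Iwasawa-level sequence to descend. The paper sidesteps this by working directly at finite level. It introduces $\Y(\QQ(\mu_{p^n}))=\coker\bigl(H^1(G_{n,S},T)\to H^1_{/f}(\Qpn,T)\bigr)$ and uses the finite-level Poitou--Tate sequence
\[
0\to\Y(\QQ(\mu_{p^n}))\to\X(\QQ(\mu_{p^n}))\to\X_0(\QQ(\mu_{p^n}))\to 0
\]
(cf.~\cite[(10.35)]{kobayashi03}). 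The identifications $\nabla\Y(\QQ(\mu_{p^{n+1}}))^\eta=\nabla\Xloc(\QQ(\mu_{p^{n+1}}))^\eta$ and $\nabla\X_0(\QQ(\mu_{p^{n+1}}))^\eta=\nabla_n\X_0^\eta$ are then supplied by \cite[Proposition~10.6]{kobayashi03}, which packages precisely the ``stabilising error terms'' you allude to (contributions from bad primes away from $p$ and the control-theorem defects). So rather than reproving this descent, you can simply cite Kobayashi; what you flagged as the main obstacle is already handled in the literature.
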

\begin{proof}
Let $\Y(\QQ(\mu_{p^n}))=\coker (H^1(G_{n,S},T)\rightarrow H^1_{/f}(\Qpn,T))$ and $\X_0(\QQ(\mu_{p^n}))=\Sel_0(T^\vee/\QQ(\mu_{p^n}))^\vee$. As a consequence of the Poitou--Tate exact sequence, we have the short exact sequence
\[
0\rightarrow \Y(\QQ(\mu_{p^n}))\rightarrow\X(\QQ(\mu_{p^n}))\rightarrow\X_0(\QQ(\mu_{p^n}))\rightarrow0
\]
(c.f. \cite[(10.35)]{kobayashi03}). But Proposition~10.6 in \textit{op. cit.} says that
\begin{itemize}
\item $\nabla\Y(\QQ(\mu_{p^{n+1}}))^\eta$ is defined for $n\gg0$ and is equal to $\nabla\Xloc(\QQ(\mu_{p^{n+1}}))^\eta$;
\item $\nabla\X_0(\QQ(\mu_{p^{n+1}}))^\eta=\nabla_n\X_0^\eta$.
\end{itemize} 
  Therefore,
\[
\nabla\X(\QQ(\mu_{p^{n+1}}))^\eta=\nabla\Xloc(\QQ(\mu_{p^{n+1}}))^\eta+\nabla_n\X_0^\eta
\]
and our result follows from Propositions~\ref{pro:evalXloc} and~\ref{prop:pt}.
\end{proof}

\subsection{Bloch--Kato--Shafarevich--Tate groups}

   Let $L$ be a number field. We recall that the Bloch--Kato--Shafarevich--Tate group of $T^\vee$ over $L$ is defined to be
   \begin{equation}
    \label{eq:defnsha}
    \sha(L, T^\vee) = \frac{\Sel(T^\vee/L)}{\Sel(T^\vee/L)_{\div}},
   \end{equation}    
   where $(\star)_{\div}$ denotes the maximal divisible subgroup of $\star$. (See e.g.~\cite[Remark~5.15.2]{blochkato90}). If $f$ corresponds to an elliptic curve $\cE$ and the $p$-primary part of the classical Shafarevich--Tate group $\cE$ is finite, then the two definitions of ($p$-primary) Shafarevich--Tate groups agree.

\begin{proposition}\label{prop:stabilize}
There exists  integers $n_0^\eta,r_\infty^\eta\ge0$ such that 
\[
\corank_{\cO}\Sel(T^\vee/\QQ(\mu_{p^{n+1}}))^\eta= r_\infty^\eta
\]
for all $n\ge n_0^\eta$.
\end{proposition}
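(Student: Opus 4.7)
The plan is to reduce the statement to the fact, established in Corollary~\ref{cor:selmergrowth}, that the Kobayashi rank $\nabla\X(\QQ(\mu_{p^{n+1}}))^\eta$ is defined for all sufficiently large $n$, and then extract stabilisation of the $\cO$-rank from this.

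First, by Pontryagin duality one has
\[
 \corank_{\cO}\Sel(T^\vee/\QQ(\mu_{p^{n+1}}))^\eta
 =\rk_{\cO}\X(\QQ(\mu_{p^{n+1}}))^\eta,
\]
so it suffices to show that $\rk_{\cO}\X(\QQ(\mu_{p^{n+1}}))^\eta$ is independent of $n$ for $n\gg 0$. The natural corestriction maps (dual to the restriction maps on the Bloch--Kato Selmer groups) furnish an inverse system of finitely generated $\cO$-modules with transition morphisms $\pi_n\colon \X(\QQ(\mu_{p^{n+1}}))^\eta\to\X(\QQ(\mu_{p^{n}}))^\eta$.

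By Corollary~\ref{cor:selmergrowth} (combined with the torsion hypothesis on $\X_i^\eta$ and $\X_0^\eta$, which guarantees that all ingredients in its statement make sense), the Kobayashi rank $\nabla\X(\QQ(\mu_{p^{n+1}}))^\eta$ is defined for $n\ge n_0^\eta$, i.e. $\ker\pi_n$ and $\coker\pi_n$ are both finite. In particular they are $\cO$-torsion, so from the exact sequence
\[
 0\to\ker\pi_n\to\X(\QQ(\mu_{p^{n+1}}))^\eta\to\image\pi_n\to 0
\]
and the inclusion $\image\pi_n\subseteq\X(\QQ(\mu_{p^n}))^\eta$ with finite quotient we obtain
\[
 \rk_{\cO}\X(\QQ(\mu_{p^{n+1}}))^\eta
 =\rk_{\cO}\image\pi_n
 =\rk_{\cO}\X(\QQ(\mu_{p^n}))^\eta.
\]
Hence the sequence of $\cO$-ranks is eventually constant, and we may set $r_\infty^\eta$ to be this common value, obtaining the desired conclusion for all $n\ge n_0^\eta$.

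The only conceptual content is therefore already packaged in Corollary~\ref{cor:selmergrowth}; the rest is the elementary observation that finiteness of the kernel and cokernel of the transition maps forces the $\cO$-rank to stabilise. No further global input is needed.
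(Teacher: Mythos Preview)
Your proposal is correct and follows essentially the same approach as the paper: invoke Corollary~\ref{cor:selmergrowth} to conclude that $\nabla\X(\QQ(\mu_{p^{n+1}}))^\eta$ is defined for $n\gg 0$, and then use the finiteness of the kernel and cokernel of the transition maps to deduce that the $\cO$-ranks (equivalently, the $\cO$-coranks of the Selmer groups) stabilise. Your write-up is slightly more explicit about the elementary rank computation, but the argument is the same.
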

\begin{proof}
By Corollary~\ref{cor:selmergrowth}, $\nabla \X(\QQ(\mu_{p^{n+1}}))^\eta$ is defined for $n\gg0$. In particular, the kernel and cokernel of the connecting map
\[
\Sel(T^\vee/\QQ(\mu_{p^{n+1}}))^\vee\rightarrow\Sel(T^\vee/\QQ(\mu_{p^{n}}))^\vee
\]
are finite for $n\gg0$. In particular, $\Sel(T^\vee/\QQ(\mu_{p^{n+1}}))$ and $\Sel(T^\vee/\QQ(\mu_{p^n}))$ must have the same $\Zp$-corank.
\end{proof}

This implies that $\Sel(T^\vee/\QQ(\mu_{p^{n+1}}))^\eta_{\div}\cong(E/\cO)^{\oplus r_\infty^\eta}$ (as $\Zp$-modules) for $n\gg0$. Combined this with \eqref{eq:defnsha}, we obtain the following short exact sequence of $\Zp$-modules
   \[
   0\rightarrow (E/\cO)^{\oplus r_\infty^\eta}\rightarrow\Sel(T^\vee/\QQ(\mu_{p^{n+1}}))^\eta\rightarrow \sha(\QQ(\mu_{p^{n+1}}),T^\vee)^\eta\rightarrow0.
   \]
 Therefore, on taking Pontryagin duals, we deduce that
\[
\nabla\X(\QQ(\mu_{p^{n+1}}))^\eta=r_\infty^\eta+\nabla\sha(\QQ(\mu_{p^{n+1}}),T^\vee)^\eta.
\]
From Corollary~\ref{cor:selmergrowth}, we deduce that
\[
\nabla\sha(\QQ(\mu_{p^{n+1}}),T^\vee)^\eta\le eq_n^*+\nabla_n\X_{\tau(n,\eta)}^\eta+e\kappa(n,\eta)-r_\infty^\eta.
\]
Therefore, we obtain the following theorem on applying Lemma~\ref{lem:finitekob}.
\begin{theorem}\label{thm:final}
Let $\#\sha(\QQ(\mu_{p^{n}}),T^\vee)^\eta= p^{\e_n^\eta}$.
For $n\gg0$, 
\[
\e_{n+1}^\eta-\e_n^\eta\le r\left(eq_n^*+\nabla_n\X_{\tau(n,\eta)}^\eta+e\kappa(n,\eta)-r_\infty^\eta\right),
\]
where $r$ is the integer so that the residue field of $E$ has cardinality $p^r$. 
\end{theorem}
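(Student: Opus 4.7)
The plan is essentially a compilation of the results established in the preceding sections; the theorem is the combinatorial payoff of the Coleman-map and Poitou--Tate analysis carried out above. I would proceed by converting the upper bound on the growth of dual Selmer groups (Corollary~\ref{cor:selmergrowth}) into a growth bound on Shafarevich--Tate groups via the defining short exact sequence \eqref{eq:defnsha}, and then translate Kobayashi ranks into sizes using Lemma~\ref{lem:finitekob}.

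First, I would invoke Proposition~\ref{prop:stabilize}: for $n \gg 0$ the $\cO$-corank of $\Sel(T^\vee/\QQ(\mu_{p^{n+1}}))^\eta$ stabilizes to $r_\infty^\eta$, so its maximal divisible part is non-canonically $(E/\cO)^{\oplus r_\infty^\eta}$. Together with \eqref{eq:defnsha} this yields the short exact sequence
\[
0 \to (E/\cO)^{\oplus r_\infty^\eta} \to \Sel(T^\vee/\QQ(\mu_{p^{n+1}}))^\eta \to \sha(\QQ(\mu_{p^{n+1}}), T^\vee)^\eta \to 0.
\]
Dualizing and taking $\nabla$, while using that Kobayashi rank is additive on short exact sequences (\cite[Lemma 10.4]{kobayashi03}) and that the dual of $(E/\cO)^{\oplus r_\infty^\eta}$ is a free $\cO$-module of rank $r_\infty^\eta$ (whose contribution to $\nabla$ is exactly $r_\infty^\eta$), gives
\[
\nabla \X(\QQ(\mu_{p^{n+1}}))^\eta = r_\infty^\eta + \nabla \sha(\QQ(\mu_{p^{n+1}}), T^\vee)^\eta.
\]

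Next, I would feed this identity into Corollary~\ref{cor:selmergrowth} to obtain
\[
\nabla \sha(\QQ(\mu_{p^{n+1}}), T^\vee)^\eta \le e q_n^* + \nabla_n \X_{\tau(n,\eta)}^\eta + e \kappa(n,\eta) - r_\infty^\eta.
\]
Finally, since $\sha(\QQ(\mu_{p^{n+1}}), T^\vee)^\eta$ is finite by definition, Lemma~\ref{lem:finitekob} applies and gives $r\, \nabla \sha(\QQ(\mu_{p^{n+1}}), T^\vee)^\eta = \e_{n+1}^\eta - \e_n^\eta$, whereupon multiplying the displayed inequality by $r$ produces the desired bound.

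There is no serious obstacle: all of the non-trivial analytic and representation-theoretic input (the construction and injectivity of $\uCol_{T,n+1,u}$, the valuations of $\sH_n(\epsilon_n)$ via Proposition~\ref{prop:evaluateH}, the Poitou--Tate computation in Proposition~\ref{prop:pt}, and the characterization of $\image \Col_i^\eta$) is already packaged into Corollary~\ref{cor:selmergrowth}. The only point requiring mild care is checking that the constant $r_\infty^\eta$ indeed equals $\nabla$ of the dual of $(E/\cO)^{\oplus r_\infty^\eta}$ in the inverse system sense — but since the transition maps on the divisible subgroups become (for $n$ large) surjections between free $\cO$-modules of the same rank, this is immediate from the definition of $\nabla$.
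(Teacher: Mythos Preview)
Your proposal is correct and follows essentially the same approach as the paper: the paper likewise deduces the short exact sequence from Proposition~\ref{prop:stabilize} and \eqref{eq:defnsha}, takes Pontryagin duals to obtain $\nabla\X(\QQ(\mu_{p^{n+1}}))^\eta = r_\infty^\eta + \nabla\sha(\QQ(\mu_{p^{n+1}}),T^\vee)^\eta$, substitutes into Corollary~\ref{cor:selmergrowth}, and then applies Lemma~\ref{lem:finitekob}. Your additional remark about why the dual of $(E/\cO)^{\oplus r_\infty^\eta}$ contributes exactly $r_\infty^\eta$ to $\nabla$ is a harmless elaboration of a point the paper leaves implicit.
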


Using Lemma~\ref{lem:kob}, we may rewrite this formula as
\[
\e_{n+1}^\eta-\e_n^\eta\le d\left(q_n^*+\lambda_{\tau(n,\eta)}+(p^{n}-p^{n-1})\frac{\mu_{\tau(n,\eta)}}{e}+\kappa(n,\eta)-\frac{r_\infty^\eta}{e}\right),
\]
where $d=[E:\Qp]$.

\appendix
\section{Growth of Tamagawa numbers over cyclotomic extensions}

We let $T=T_f(j)$ and $\T=T_f(k-1)$ be the representations studied in the main part of the article.  In particular, we assume all the previous hypotheses on $T$ and $\T$ are satisfied throughout. Furthermore, we shall assume that the eigenvalues of $\vp$ on $\Dcris(\T)$ are not integral powers of $p$. For notational simplicity, we shall assume that the coefficient field $E$ is $\Qp$ throughout.

Recall the Perrin-Riou $p$-adic regulator
\[
\cL_\T:\HIw(\Qp(\mu_{p^\infty}),\T)\rightarrow \cH\otimes\Dcris(\T)
\]
defined by $\fM^{-1}\circ(1-\vp)\circ(h^1_\T)^{-1}$, which is the map used to define the Coleman maps in Definition~\ref{defn:Coleman}. We have the following interpolation formula
\begin{proposition}\label{prop:interpolation}
Let $n\ge1$. For any $z\in \HIw(\Qp(\mu_{p^\infty}),\T)$, $i\ge 0$ and a Dirichlet character $\delta$ of conductor $p^n$, we have
\[
\cL_\T(z)(\chi^i\delta)=
\begin{cases}
i!(1-\vp)(1-p^{-1}\vp^{-1})^{-1}\left(\exp^*( z_{0,-i})\right)\cdot t^{-i}e_i&\text{if $n=0$,}\\
\frac{i!p^n}{\tau(\delta)}\vp^n\left(\exp^*(\tilde{e}_\delta\cdot z_{n,-i})\right)\cdot t^{-i}e_i&\text{otherwise,}
\end{cases}
\] 
where $\tau(\delta)$ is the Gauss sum of $\delta$, $z_{n,-i}$ is the projection of $z$ in $H^1(\Qpn,\T(-i))$ and $\tilde{e}_\delta$ is the element $\sum_{\sigma\in\Gal(\Qpn/\Qp)}\delta^{-1}(\sigma)\sigma$.
\end{proposition}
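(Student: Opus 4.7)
The plan is to deduce this from the fundamental interpolation property of the Perrin-Riou regulator established in \cite{perrinriou94}, together with the reformulations using Wach modules in \cite{berger03}. The key point is that by construction $\cL_\T = \fM^{-1}\circ(1-\vp)\circ(h^1_\T)^{-1}$ is (up to normalization) the inverse of Perrin-Riou's big exponential $\Omega_{\T,h}$ for an appropriate $h$, and hence is characterized by its values at characters $\chi^i\delta$.

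First I would reduce to the case $i=0$. Evaluating $\cL_\T(z)$ at the character $\chi^i\delta$ is the same as evaluating $\Tw^{-i}\cL_\T(z)$ at $\delta$. Using the compatibility $\fM\circ\Tw=\partial\circ\fM$ from Section~\ref{sect:mellin} (iterated $i$ times), and the identification $\Dcris(\T(-i))=\Dcris(\T)\cdot t^{-i}e_i$, the $i$-fold application of the derivation $\partial=(1+\pi)\tfrac{d}{d\pi}$ to $\cL_\T(z)$ accounts simultaneously for the factorial $i!$ (coming from $\partial^i$ acting on elements of $\Brig$ near $\pi=0$ relative to the basis vectors) and for the twist factor $t^{-i}e_i$ in the formula. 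This reduces the statement to checking the formula for the twist $\T(-i)$ at twist $0$.

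Next, for the case $n\ge 1$: writing $z'=(h^1_\T)^{-1}(z)\in\NN(\T)^{\psi=1}$, we have $\fM(\cL_\T(z))=(1-\vp)z'$. Evaluation of $F\in(\AQp)^{\psi=0}$ at a Dirichlet character $\delta$ of conductor $p^n$ is given, via the Mellin transform, by a Fourier-analytic sum of the form $\tfrac{1}{[\Gamma:\Gamma_n]}\sum_{\sigma}\delta^{-1}(\sigma)\cdot F(\chi(\sigma)(1+\pi)-1)|_{\pi=\zeta_{p^n}-1}$, which after rearrangement produces the factor $p^n/\tau(\delta)$. The operator $\vp^n$ arises because $\vp^n(\pi)$ vanishes at primitive $p^n$-th roots of unity, so $(1-\vp)z'$ evaluated at $\zeta_{p^n}-1$ collapses to $-\vp(z')$ evaluated there, and inductive application of $\vp^n$ together with the Fontaine--Berger isomorphism $h^1_\T$ identifies this with $\vp^n\exp^*(\tilde{e}_\delta\cdot z_{n,0})$. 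This step essentially invokes Kato's explicit reciprocity law in the form used for \eqref{eq:zeros}.

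For the trivial character case $n=0$: the value $\fM(\cL_\T(z))(0)$ is of the form $(1-\vp)z'(0)$, and the standard exceptional-zero analysis (because of the pole/zero of $\fM^{-1}$ at the trivial character) inserts the modified Euler factor $(1-\vp)(1-p^{-1}\vp^{-1})^{-1}$, reflecting the difference between the naive evaluation and the Bloch--Kato dual exponential $\exp^*$ of the degree-zero layer $z_{0,0}\in H^1(\Qp,\T)$; this is the dual of the standard Euler factor $(1-p^{-1}\vp^{-1})(1-\vp)^{-1}$ appearing in Perrin-Riou's exponential at the trivial character. The main obstacle is the careful bookkeeping of all Gauss-sum, sign, and Euler-factor normalizations. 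This is a standard but delicate calculation; after reducing to $i=0$ as above, it is essentially contained in \cite{perrinriou94} (see also \cite[\S II]{cherbonniercolmez99} and \cite[\S A]{berger03}).
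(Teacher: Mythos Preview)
Your proposal sketches a from-scratch derivation of the interpolation formula, going back to the constructions of \cite{perrinriou94}, \cite{cherbonniercolmez99}, and \cite{berger03}. The paper's proof, by contrast, is a one-line citation: the stated formula is a direct reformulation of \cite[Theorem~B.5]{loefflerzerbes11}, with the only bookkeeping being the observation that $\vp(t^{-i}e_i)=p^{-i}t^{-i}e_i$, which accounts for the shift in the power of $p$ when passing between $\T$ and its twists. So you are re-deriving a result that is already packaged in the literature in exactly the form needed here.

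Your outline is broadly along the right lines --- the reduction to $i=0$ via the compatibility $\fM\circ\Tw=\partial\circ\fM$, the appearance of the Gauss sum from evaluating at primitive roots of unity, and the modified Euler factor at the trivial character are all the correct ingredients --- but as written it remains a plan rather than a proof. In particular, the sentence ``inductive application of $\vp^n$ together with the Fontaine--Berger isomorphism $h^1_\T$ identifies this with $\vp^n\exp^*(\tilde{e}_\delta\cdot z_{n,0})$'' hides precisely the content of the explicit reciprocity law you would need to establish, and you acknowledge that the normalization bookkeeping is ``delicate'' without carrying it out. If you want a self-contained argument, you would need to actually perform these computations; but for the purposes of this paper it is both cleaner and more efficient to cite \cite[Theorem~B.5]{loefflerzerbes11} directly.
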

\begin{proof}
This is a slight reformulation of \cite[Theorem~B.5]{loefflerzerbes11} since we have the equation $$\vp(t^{-i}e_i)=p^{-i}\cdot t^{-i}e_i.$$
\end{proof}

\begin{corollary}\label{cor:criterionvanish}
Let $z\in \HIw(\Qp,\T)$. Then, $\cL_\T(z)(\chi^i\delta)=0$ if and only if $\tilde{e}_\delta\cdot z_{n,-i}\in \tilde{e}_\delta\cdot H^1_f(\Qpn,\T(-i))$.
\end{corollary}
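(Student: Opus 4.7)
My plan is to read the corollary off directly from the interpolation formula of Proposition~\ref{prop:interpolation}, together with the Bloch--Kato characterisation of $H^1_f$ as the kernel of the dual exponential.

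First I would separate the two cases $n=0$ and $n\ge 1$. In both cases the right-hand side of the interpolation formula is the image of $\exp^*(\tilde{e}_\delta\cdot z_{n,-i})$ under multiplication by a nonzero scalar (a factorial, a power of $p$, a Gauss sum, and the nonzero element $t^{-i}e_i$) composed with a certain operator acting on $\Dcris(\T(-i))$. For $n\ge 1$ this operator is $\vp^n$; for $n=0$ it is $i!\,(1-\vp)(1-p^{-1}\vp^{-1})^{-1}$. The hypothesis that the eigenvalues of $\vp$ on $\Dcris(\T)$ are not integral powers of $p$ will be used exactly here: twisting by $\chi^{-i}$ shifts these eigenvalues by $p^i$, so they remain distinct from $0$, $1$ and $p$ on $\Dcris(\T(-i))$. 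Hence $\vp^n$ is invertible, and the factors $(1-\vp)$ and $(1-p^{-1}\vp^{-1})$ are both invertible, so the full operator in the $n=0$ case is an automorphism of $\Dcris(\T(-i))$.

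Consequently, the vanishing of $\cL_\T(z)(\chi^i\delta)$ is equivalent to the vanishing of $\exp^*(\tilde{e}_\delta\cdot z_{n,-i})$. By Bloch--Kato \cite{blochkato90}, $\ker(\exp^*:H^1(\Qpn,\T(-i))\to\Qpn\otimes\Fil^0\Dcris(\T(-i)))=H^1_f(\Qpn,\T(-i))$, so this vanishing is in turn equivalent to
\[
\tilde{e}_\delta\cdot z_{n,-i}\ \in\ H^1_f(\Qpn,\T(-i)).
\]

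Finally, I would observe that the element $\tilde{e}_\delta\cdot z_{n,-i}$ is automatically $\delta$-isotypic under $\Gal(\Qpn/\Qp)$, since $\sigma\tilde{e}_\delta=\delta(\sigma)\tilde{e}_\delta$. As $H^1_f$ is stable under $\Gal(\Qpn/\Qp)$, the idempotent $\tilde{e}_\delta$ maps $H^1_f$ into itself, and a $\delta$-isotypic element of $H^1_f$ lies in $\tilde{e}_\delta\cdot H^1_f(\Qpn,\T(-i))$; conversely $\tilde{e}_\delta\cdot H^1_f\subset H^1_f$. So membership in $H^1_f$ and in $\tilde{e}_\delta\cdot H^1_f$ are equivalent for elements of the form $\tilde{e}_\delta\cdot w$, which gives the stated criterion. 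There is no real obstacle: the only point that requires care is verifying the invertibility of the Euler-type operators attached to $\vp$, which is precisely the role of the running hypothesis on the eigenvalues.
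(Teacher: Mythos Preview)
Your proposal is correct and follows the same approach as the paper: the paper's own proof is a single sentence stating that the hypothesis on the eigenvalues of $\vp$ makes $(1-\vp)(1-p^{-1}\vp^{-1})^{-1}$ and $\vp^n$ invertible, which is exactly the core of your argument. You have simply supplied the details the paper leaves implicit, namely the identification $\ker(\exp^*)=H^1_f$ and the passage between $H^1_f$ and $\tilde{e}_\delta\cdot H^1_f$ (note only that $\tilde{e}_\delta$ is $|\Gamma/\Gamma_n|$ times the idempotent, so the last step is cleanest after inverting $p$, which suffices here).
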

\begin{proof}
This is because our assumption on the eigenvalues of $\vp$ implies that $(1-\vp)(1-p^{-1}\vp^{-1})^{-1}$ and $\vp^n$ are both invertible.
\end{proof}

We write $K=\QQ(\mu_{p^n})$ and $\Delta_K=\Gal(K/\QQ)$. For each character $\delta$ on $\Delta_K$, we write $p^{n_\delta}$ for its conductor. Let $K_p$ be the completion of $K$ at the unique place above $p$ (which may be identified with $\Qpn$). We fix a basis $v$ for $\Fil^0\Dcris(T)$ and its dual $v'$ in $\Dcris(T^*(1))/\Fil^0\Dcris(T^*(1))$. We have the definition of the Tamagawa number as defined by Bloch--Kato \cite{blochkato90}:
\[
\Tam(T/K)=[H^1_f(K_p,T):\cO_{K_p}\cdot v]L_p(T,1),
\]
where $L_p(T,1)$ is the Euler factor of the complex $L$-function $L_p(T,1)$ at $p$ and we identify $\cO_{K_v}v$ with its image under the Bloch--Kato exponential map. We may decompose the Tamagawa number into isotypic components, namely
\[
\Tam(T/K)=\prod_{\eta}\Tam(T/K)^\eta,
\]
where the product runs through all the Dirichlet characters modulo $p$ and $\Tam(T/K)^\eta$ is given by
\[
[H^1_f(K_p,T)^\eta:\left(\cO_{K_p}\cdot v\right)^\eta]L_p(T(\eta),1),
\]
which we may identify with $\Tam(T(\eta)/K^\Delta)$.

\begin{lemma}\label{lem:tam}
Let $d_K$ be the discriminant of $K$. Then, we have the formula
\[
\Tam(T/K)=|d_K|_p^{-1}[\cO_{K_p}\cdot v:H^1_{/f}(K_p,T)]L_p(T,1),
\]
where we identify $H^1_{/f}(K_p,T)$ with its image under the Bloch--Kato dual exponential map.
\end{lemma}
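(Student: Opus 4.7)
The plan is to verify the equivalent reformulation
\[ [H^1_f(K_p,T) : \cO_{K_p}\cdot v] = |d_K|_p^{-1}\,[\cO_{K_p}\cdot v : H^1_{/f}(K_p,T)], \]
which, together with the definition of $\Tam(T/K)$ recalled just above the lemma, yields the claim. This is a routine consequence of Bloch--Kato's $\exp/\exp^*$ formalism together with local Tate duality, with the discriminant factor arising from the trace pairing on $K_p$.

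First, I would fix the Bloch--Kato isomorphisms
\[ \exp \colon K_p \otimes_{\Qp} \Dcris(V)/\Fil^0 \xrightarrow{\sim} H^1_f(K_p, V), \qquad \exp^* \colon H^1_{/f}(K_p, V) \xrightarrow{\sim} K_p \otimes_{\Qp} \Fil^0 \Dcris(V), \]
so that $H^1_f(K_p,T)$ and $H^1_{/f}(K_p,T)$ can both be viewed as $\cO_{K_p}$-lattices of the same rank as $\cO_{K_p}\cdot v$, and both indices in the target formula become honest finite-index invariants of lattices in a common ambient $K_p$-vector space.

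The core of the proof is local Tate duality: the perfect pairing $H^1(K_p,T) \times H^1(K_p,T^*(1)) \to \cO$ pairs $H^1_f(K_p,T)$ with $H^1_f(K_p,T^*(1))$ as mutual annihilators, and hence descends to a perfect pairing $H^1_f(K_p,T) \times H^1_{/f}(K_p,T^*(1)) \to \cO$. By Bloch--Kato reciprocity, under $\exp$ and $\exp^*$ this pairing corresponds to the canonical $\Dcris$-pairing $\Dcris(V)/\Fil^0 \times \Fil^0 \Dcris(V^*(1)) \to \Qp$ composed with $\Tr_{K_p/\Qp}$. The standard fact that the trace-pairing dual of $\cO_{K_p}$ is the inverse different $\mathfrak{d}_{K_p/\Qp}^{-1}$, of index $[\mathfrak{d}_{K_p/\Qp}^{-1} : \cO_{K_p}] = |d_K|_p^{-1}$, then supplies the discriminant factor when one passes between $\cO_{K_p}$-lattices and their trace-duals.

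Finally, I would transport the duality from $T^*(1)$ back to $T$ using the chosen dual bases $v \in \Fil^0\Dcris(T)$ and $v' \in \Dcris(T^*(1))/\Fil^0\Dcris(T^*(1))$, matching the resulting lattices $\cO_{K_p}\cdot v$ and $\cO_{K_p}\cdot v'$ on the two sides of the pairing. The main obstacle is bookkeeping: tracking the two sides of the Tate pairing (one involving $T$, the other $T^*(1)$), aligning the normalisations of $\exp$, $\exp^*$ and the canonical pairing on $\Dcris$, and confirming that $|d_K|_p^{-1}$ (rather than $|d_K|_p$) ends up on the correct side of the equality.
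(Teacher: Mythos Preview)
Your proposal is correct and follows essentially the same route as the paper: the paper's proof consists of the single commutative diagram expressing that the cup-product pairing $H^1_{/f}(K_p,T)\times H^1_f(K_p,T^*(1))\to\Qp$ is identified, via $\exp^*$ and $\exp$, with the $\Dcris$-pairing followed by $\Tr_{K_p/\Qp}$, which is exactly the Bloch--Kato reciprocity you invoke. Your discussion of the inverse different and the transport along the dual bases $v,v'$ simply spells out the lattice bookkeeping that the paper leaves implicit.
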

\begin{proof}
This follows from the commutative diagram
\[
\begin{CD}
\left({K_p}\otimes\Fil^0\Dcris(T)\right) @.\times@.\left({K_p}\otimes\frac{\Dcris(T^*(1)}{\Fil^0\Dcris(T^*(1))}\right) @>>>{K_p}\\
 @AA\exp^*A @.@VV\exp V @VV\Tr_{K_p/\Qp} V\\
\left(  \Qp\otimes H^1_{/f}(K_p,T)\right) @.\times@.   \left(\Qp\otimes H^1_f(K_p,T^*(1)) \right)@>>> \Qp.
\end{CD}
\]
\end{proof}

 Take $\be$ to be a $\Lambda$-generator of $\HH^1(T)$ as in the main part of the article.  This gives a $\Lambda$-basis $\be\cdot e_{k-j-1}$ of $\HH^1(\T)$.  We shall write $\cL_T(\be)$ for $\Tw_{-k+j+1}\circ\cL_\T(\be\cdot e_{k-j-1})$
and
\[
\tilde{v}_K=\bigotimes_{\delta\in\hat{\Delta}_K}\left(\vp^{n_\delta}(1-\delta(p)\vp)(1-p^{-1}\bar{\delta}(p)\vp^{-1})^{-1}v\right).
\]

\begin{theorem}\label{thm:tamL}
Suppose that $\cL_T(\be)(\delta)\ne0$ for all $\delta\in\hat{\Delta}_K$. Then,
\[
\bigotimes_{\delta\in\hat{\Delta}_K}\cL_T(\be)(\delta)\sim_p 
\frac{\Tam(T/K)}{L_p(T,1)}\prod_{\delta}\left[e_\delta H^1_{/f}(K_p,T):e_\delta\be_K\right]\tilde{v}_K.
\]
Here, we write $a\sim_p b$ if $a$ and $b$ have the same $p$-adic valuation.
\end{theorem}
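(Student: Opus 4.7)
The plan is to evaluate each factor $\cL_T(\be)(\delta)$ individually via the interpolation formula of Proposition~\ref{prop:interpolation}, assemble the tensor product over $\hat{\Delta}_K$, and match the outcome to the Tamagawa-number formula of Lemma~\ref{lem:tam}.

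First, for each $\delta\in\hat{\Delta}_K$ of conductor $p^{n_\delta}$, I would unravel the definition $\cL_T(\be)=\Tw_{-k+j+1}\circ\cL_\T(\be\cdot e_{k-j-1})$ and apply Proposition~\ref{prop:interpolation} with $i=k-j-1$, so that the factor $t^{-i}e_i$ cancels against $\Tw_{-k+j+1}$. Up to $p$-adic units, this produces
\[
\cL_T(\be)(\delta)\sim_p\tau(\delta)^{-1}\,\vp^{n_\delta}\bigl(\exp^*(\tilde{e}_\delta\cdot\be_K)\bigr)\qquad(\delta\ne 1),
\]
with $(1-\vp)(1-p^{-1}\vp^{-1})^{-1}$ replacing $\vp^{n_\delta}$ for trivial $\delta$. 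The hypothesis that the eigenvalues of $\vp$ are not integral powers of $p$ ensures each operator is invertible on $\Dcris(T)$. Crucially, when these operators act on the fixed basis $v\in\Fil^0\Dcris(T)$, they produce exactly the $\delta$-tensorands of $\tilde v_K$. Writing $\exp^*(\tilde{e}_\delta\be_K)=a_\delta\cdot(\tilde{e}_\delta v)$ for some $a_\delta\in\tilde{e}_\delta K_p$ (which is nonzero by the hypothesis and Corollary~\ref{cor:criterionvanish}), the tensor $\bigotimes_\delta\cL_T(\be)(\delta)$ becomes $\bigl(\prod_\delta\tau(\delta)^{-1}a_\delta\bigr)\cdot\tilde v_K$, again up to a unit.

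Second, the scalar $\prod_\delta a_\delta$ is the covolume of $\bigoplus_\delta\cO\cdot\tilde{e}_\delta\be_K$ inside $\cO_{K_p}\cdot v$ under $\exp^*$, and splits multiplicatively as
\[
\prod_\delta\bigl[\tilde{e}_\delta H^1_{/f}(K_p,T):\tilde{e}_\delta\be_K\bigr]\cdot\bigl[\cO_{K_p}\cdot v:H^1_{/f}(K_p,T)\bigr]
\]
by composing the inclusion $\tilde{e}_\delta\be_K\hookrightarrow\tilde{e}_\delta H^1_{/f}(K_p,T)$ with the map $\exp^*:H^1_{/f}(K_p,T)\hookrightarrow\cO_{K_p}\cdot v$. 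Simultaneously, the conductor-discriminant formula converts $\prod_\delta\tau(\delta)^{-1}$ into $|d_K|_p^{-1}$ up to $p$-adic units. Combining these ingredients with Lemma~\ref{lem:tam}, which asserts that $\Tam(T/K)=|d_K|_p^{-1}[\cO_{K_p}\cdot v:H^1_{/f}(K_p,T)]L_p(T,1)$, one reads off precisely the right-hand side of the theorem.

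The main obstacle will be the careful bookkeeping of normalisation factors: matching Gauss sums to the discriminant via conductor-discriminant, reconciling the various factorials and powers of $p$ that appear in Proposition~\ref{prop:interpolation} with those implicit in Lemma~\ref{lem:tam}, and ensuring that the twist $\Tw_{-k+j+1}$ is correctly absorbed at every step. A secondary subtlety lies in justifying the multiplicative splitting of the covolume at the level of $\cO$-lattices in $\bigotimes_\delta\tilde{e}_\delta(K_p\otimes\Fil^0\Dcris(T))$; here one must check that the choice of reference basis $v$ drops out of the final comparison because Lemma~\ref{lem:tam} is normalised with respect to the same $v$.
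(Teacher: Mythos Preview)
Your outline matches the paper's proof closely: apply the interpolation formula character by character, collect the $\varphi$-twists into $\tilde v_K$, and then compare lattices using Lemma~\ref{lem:tam}. However, the step where you pass from Gauss sums to the discriminant contains a genuine error, not just a bookkeeping nuisance.

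First, the interpolation formula of Proposition~\ref{prop:interpolation} gives a factor $\dfrac{p^{n_\delta}}{\tau(\delta)}$, not $\tau(\delta)^{-1}$; you have silently dropped the $p^{n_\delta}$. Second, and more seriously, your claim that ``the conductor--discriminant formula converts $\prod_\delta\tau(\delta)^{-1}$ into $|d_K|_p^{-1}$'' is false. Conductor--discriminant says $\prod_\delta p^{n_\delta}=|d_K|_p^{-1}$, which accounts for the factors you dropped, but says nothing directly about $\prod_\delta\tau(\delta)$ $p$-adically (already for $K=\QQ(\mu_p)$ one computes $\ord_p\prod_\delta\tau(\delta)=\tfrac{p-2}{2}\ne 0$). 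The paper handles the Gauss sums not by valuation but by a lattice-index argument: it rewrites each factor as an index
\[
\Bigl[e_\delta\,\cO[\Delta_K]\,\tfrac{\tau(\delta)}{[K:\QQ]}:e_\delta\,\cO[\Delta_K]\Bigr]
\]
and invokes a result of Gillard \cite{gillard79a} to show this index equals $1$. This also explains the appearance of the normalising factor $[K:\QQ]$ (coming from $\sum_\delta e_\delta=[K:\QQ]$), which your covolume splitting glosses over when comparing $\bigoplus_\delta e_\delta\cO[\Delta_K]$ with $\cO_{K_p}$. Once you restore the $p^{n_\delta}$, use conductor--discriminant for those, and replace your Gauss-sum claim with Gillard's index computation, your argument becomes the paper's.
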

\begin{proof}
Let $\be_K$ be the projection of $\be$ in $H^1(K_p,T)$. For each character of $\Delta_K$, we write $e_\delta=\sum_{\sigma\in\Delta_K}\delta^{-1}(\sigma)\sigma$ and let $K_\delta$ for the subfield of $K$ defined by the kernel of $\delta$. 
Our assumption means that $e_\delta\cdot \be_K\notin e_\delta \cdot H^1_f(\Qpn,T)$ for all $\delta$ by Corollary~\ref{cor:criterionvanish}. Note that $\sum e_\delta=[K:\QQ]$. On applying Proposition~\ref{prop:interpolation}, we deduce that
\begin{align*}
\bigotimes_{\delta\in\hat{\Delta}_K}\cL_T(\be)(\delta)\sim_p 
&\prod_{\delta}\left[\frac{e_\delta}{[K:\QQ]} \cO[\Delta_K]v:e_\delta \cO[\Delta_K]\frac{p^{n_\delta}}{\tau(\delta)}\exp^*(\be_K)\right]\tilde{v}_K
\\
\sim_p&\prod_{\delta}p^{n_\delta}\left[e_\delta\cO[\Delta_K]\frac{\tau(\delta)}{[K:\QQ]}:e_\delta\cO[\Delta_K]\right]\\&\times\left[e_\delta \cO[\Delta_K]v:e_\delta \cO[\Delta_K]\exp^*(\be_K)\right]\tilde{v}_K.
\end{align*}
Note that the factor $(k-j-1)!$ does not appear because of the Fontaine--Laffaille condition.
Now, \cite[Proposition~1]{gillard79a} tells us that
\[
\left[e_\delta\cO[\Delta_K]\frac{\tau(\delta)}{[K:\QQ]}:e_\delta\cO[\Delta_K]\right]=[K:K_\delta]\left[e_\delta\cO[\Delta_K]\frac{\tau(\delta)}{[K_\delta:\QQ]}:e_\delta\cO[\Delta_K]\right]=1.
\]
Therefore, we deduce from the conductor-discriminant formula that
\[
\bigotimes_{\delta\in\hat{\Delta}_K}\cL_T(\be)(\delta)\sim_p 
|d_K|_p^{-1}\prod_{\delta}\left[e_{\delta}\cO[\Delta_K]v: e_{\delta}\cO[\Delta_K]\exp^*(\be_K)\right]\tilde{v}_K.
\]
Combining this with Lemma~\ref{lem:tam} gives us the result.
\end{proof}

\begin{remark}
There is in fact a similar formula without assuming the non-vanishing of $\Iarith(T)(\delta)$. It would involve Perrin-Riou's $p$-adic height. See \cite[p.180]{perrinriou03}.
\end{remark}

\begin{corollary}
Let $\eta$ be a Dirichlet character modulo $p$.
Under the conditions of Theorem~\ref{thm:tamL}, we have
\[\nabla_n\Xloc^\eta+b_{n+1}^\eta-b_{n}^\eta= q_n^*+\nabla_n(\Zp\lb X\rb/\Col_{\tau(n,\eta)}(\be)^\eta)+p^{n-1}(p-1)n(k-j-1)\]
for $n\gg0$, where $\tau(n,\eta)$ is as defined in Definition~\ref{defn:modesty} and $b_i^\eta$ denotes the $p$-adic valuation of $\Tam(T/\QQ(\mu_{p^i})^{\eta})$ for $i=n,n+1$.
\end{corollary}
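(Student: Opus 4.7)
The strategy is to upgrade the inequality of Proposition~\ref{pro:evalXloc} into an equality by evaluating its defect term precisely, using the Tamagawa-number formula of Theorem~\ref{thm:tamL}. By the proof of Proposition~\ref{pro:evalXloc}, one has
\[
\nabla_n\bigl(\Zp\lb X \rb/\uCol_{T,n+1,u}(\be)^\eta\bigr) = q_n^* + \nabla_n\bigl(\Zp\lb X \rb/\Col_{\tau(n,\eta)}(\be)^\eta\bigr),
\]
so the statement reduces to the identity
\[
\len_{\Zp}\coker_{n+1}^\eta - \len_{\Zp}\coker_n^\eta \;=\; (b_{n+1}^\eta - b_n^\eta) - p^{n-1}(p-1)\,n\,(k-j-1).
\]
The left-hand side equals $\nabla\coker_{n+1}^\eta$ because the transition map $\pi:\coker_{n+1}\twoheadrightarrow\coker_n$ is surjective (snake lemma applied to the diagram of \emph{op.~cit.}), and both sides are finite.

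To compute the left-hand side character-by-character, I would use that, after base change to $\overline{\Qp}$, the $\eta$-isotypic component of $\Lambda_n$ decomposes as $\bigoplus_{\delta} \overline{\Qp}(\delta)$, where $\delta$ ranges over characters of $\Gal(\QQ(\mu_{p^n})/\QQ)$ with $\delta|_{\Delta}=\eta$. The explicit formula \eqref{eq:formulaColTnu}, combined with Perrin-Riou's interpolation formula in Proposition~\ref{prop:interpolation}, identifies $\delta\bigl(\uCol_{T,n+1,u}(\be)\bigr)$, up to a unit, with the pairing of $\exp^*(\tilde{e}_\delta\be_{n_\delta})$ against $v_1+uv_2$, multiplied by the explicit factor $(k-j-1)!\,p^{n_\delta}\tau(\delta)^{-1}\vp^{n_\delta}(t^{k-j-1}e_{-k+j+1})$. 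Summing $v_p$ of these evaluations recovers $\len\coker_n^\eta$ up to a contribution from the $\Xloc$-piece that Proposition~\ref{pro:evalXloc} already accounts for.

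Next I would apply Theorem~\ref{thm:tamL} at the two levels $K=K_{n+1}$ and $K=K_n$, restrict to $\eta$-isotypic components (legitimate because $\Tam(T/K)$ splits as $\prod_\eta\Tam(T/K)^\eta$ by the decomposition recalled in the appendix), take $v_p$ of both sides, and subtract. All characters $\delta$ with $\delta|_\Delta=\eta$ and $\cc(\delta)\le p^n$ appear in both sums and cancel, leaving exactly the $p^{n-1}(p-1)$ characters of conductor $p^{n+1}$ whose restriction to $\Delta$ is $\eta$. For each such $\delta$, the Tamagawa-index contribution $v_p[e_\delta H^1_{/f}(K_p,T):e_\delta\be_K]$ combined with Lemma~\ref{lem:tam} and the conductor-discriminant formula assembles into $b_{n+1}^\eta-b_n^\eta$; the Gauss-sum and $L$-factor contributions cancel globally (as in the end of the proof of Theorem~\ref{thm:tamL}); and the $\tilde{v}_K$-factor together with $\vp^{n_\delta}(t^{k-j-1})$ contributes $n_\delta(k-j-1)=(n+1)(k-j-1)$ per character, of which $(k-j-1)$ per character is absorbed by the $\Tw^{k-1-j}$ twist in \eqref{eq:formulaColTnu} that relates $\uCol_{T,n+1,u}$ to $\cL_T$. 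Aggregated over the $p^{n-1}(p-1)$ new characters, this yields the desired correction $-p^{n-1}(p-1)n(k-j-1)$.

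The principal obstacle is the final bookkeeping: verifying that the composite of the $\vp^{n_\delta}$-factor from $\tilde{v}_K$, the Fontaine--Laffaille scaling $v_i=\fv_i\cdot t^{k-j-1}e_{-k+j+1}$, and the $\Tw^{k-1-j}$ twist built into $\uCol_{T,n+1,u}$ leaves exactly $n(k-j-1)$ (rather than $(n+1)(k-j-1)$) per new character. This is the same flavour of computation as at the end of the proof of Theorem~\ref{thm:tamL}, and is morally the input from Perrin-Riou's $p$-adic height machinery alluded to in \cite{perrinriou03}.
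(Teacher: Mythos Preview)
Your proposal follows essentially the same route as the paper: apply Theorem~\ref{thm:tamL} at the two levels $\QQ(\mu_{p^{n+1}})$ and $\QQ(\mu_{p^n})$, take the ratio (so that only the characters of exact conductor $p^{n+1}$ with $\Delta$-part $\eta$ survive), and compare with the explicit description of $\uCol_{T,n+1}$ coming from \eqref{eq:valofz}. The paper carries this out slightly more directly: rather than passing through the scalar map $\uCol_{T,n+1,u}$ and the modules $\coker_n$, it works with the $\Dcris(T)$-valued map $\uCol_{T,n+1}$ and observes that $\vp^{-n-1}\circ\cL_T(\be)(\delta)=p^{(n+1)(k-j-1)}\,\uCol_{T,n+1}(\be)(\delta)$ via the single commutation identity $\vp^{-n-1}\circ\Tw_{-k+j+1}=p^{(n+1)(k-j-1)}\,\Tw_{-k+j+1}\circ\vp^{-n-1}$. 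This one line disposes of exactly the ``principal obstacle'' you isolate (the $n$ versus $n+1$ bookkeeping), so you need not invoke anything as heavy as $p$-adic heights; the reference to \cite{perrinriou03} in the paper is only for the excluded case where some $\cL_T(\be)(\delta)$ vanishes.
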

\begin{proof}
Let $\Delta_{n+1}$ be the set of Dirichlet characters of conductor $p^{n+1}$ whose $\Delta$-component is $\eta$. Its cardinality is given by $p^{n-1}(p-1)$.
By Theorem~\ref{thm:tamL}, we have
\[
\bigotimes_{\delta\in\Delta_{n+1}}\cL_T(\be)(\delta)\sim_p 
\frac{\Tam(T/\QQ(\mu_{p^{n+1}}))^\eta}{\Tam(T/\QQ(\mu_{p^{n}}))^\eta}\prod_{\delta\in\Delta_{n+1}}\left[e_\delta H^1_{/f}(K_p,T):e_\delta\be_K\right]\vp^{n+1}(v)^{\otimes |\Delta_{n+1}|}.
\]
This gives
\begin{equation}\label{eq:bigrelation}
\bigotimes_{\delta\in\Delta_{n+1}}\vp^{-n-1}\circ\cL_T(\be)(\delta)\sim_p 
\frac{\Tam(T/\QQ(\mu_{p^{n+1}}))^\eta}{\Tam(T/\QQ(\mu_{p^{n}}))^\eta}\prod_{\delta\in\Delta_{n+1}}\left[e_\delta H^1_{/f}(K_p,T):e_\delta\be_K\right]v^{\otimes |\Delta_{n+1}|}.
\end{equation}
Note that $\vp^{-n-1}\circ\Tw_{-k+j+1}=p^{(n+1)(k-j-1)}\Tw_{-k+j+1}\circ\vp^{-n-1}$. The terms appearing on the left-hand side are therefore simply $p^{(n+1)(k-j-1)}\uCol_{T,n+1}(\be)(\delta)$. Therefore, the $p$-adic valuation of the left-hand side of \eqref{eq:bigrelation} is given by
\[
p^{n-1}(p-1)(n+1)(k-j-1)+q_n^*+\ord_{\epsilon_n}\Col_{\tau(n,\eta)}(\be)^{\eta}(\epsilon_n)
\]
thanks to \eqref{eq:valofz}. Hence the result.
\end{proof}

The proof of our Proposition~\ref{pro:evalXloc} implies that the defect of our inequality in Theorem~\ref{thm:final} is in fact given by the length of $\ker\pi^\eta$, where $\pi$ is some projection map. We see here that we may in fact relate this defect to the Tamagawa numbers, namely,
\[
\len_{\Zp}\ker\pi^\eta=b_{n+1}^\eta-b_{n}^\eta-p^{n-1}(p-1)n(k-j-1).
\]

Let $t_n^\eta$ be the integer $s_n^\eta+b_n^\eta$, which is the $p$-adic valuation of $\#\sha(\QQ(\mu_{p^n}),T^\vee)^\eta\times\Tam(T/\QQ(\mu_{p^n}))^\eta$.  The Bloch--Kato conjecture predicts that this quantity should be related to the leading coefficient of the complex $L$ function of $T$ at $1$. Theorem~\ref{thm:final} tells us that we have the equality
\[
t_{n+1}^\eta-t_n^\eta=  q_n^*+\nabla_n\X_{\tau(n,\eta)}^\eta+\kappa(n,\eta)-r_\infty^\eta+p^{n-1}(p-1)n(k-j-1).
\]
for $n\gg0$.
\providecommand{\bysame}{\leavevmode\hbox to3em{\hrulefill}\thinspace}
\providecommand{\MR}{\relax\ifhmode\unskip\space\fi MR }
\providecommand{\MRhref}[2]{%
 \href{http://www.ams.org/mathscinet-getitem?mr=#1}{#2}
}
\providecommand{\href}[2]{#2}


\begin{thebibliography}{Maz72}
 
 \bibitem[Ber03]{berger03}
 Laurent Berger, \emph{Bloch and {K}ato's exponential map: three explicit
  formulas}, Doc. Math. Extra Vol. \textbf{3} (2003), 99--129, Kazuya Kato's
 Fiftieth Birthday. \MR{2046596}
 
 \bibitem[Ber04]{berger04}
 \bysame, \emph{Limites de repr{\'e}sentations cristallines}, Compos. Math.
 \textbf{140} (2004), no.~6, 1473--1498. \MR{2098398}
 
 \bibitem[BK90]{blochkato90}
 Spencer Bloch and Kazuya Kato, \emph{{$L$}-functions and {T}amagawa numbers of
  motives}, The {G}rothendieck {F}estschrift, {V}ol.\ {I} (Cartier et~al, ed.),
 Progr. Math., vol.~86, Birkh{\"a}user, Boston, MA, 1990, pp.~333--400.
 \MR{1086888}
 
 \bibitem[CC99]{cherbonniercolmez99}
 Fr{\'e}d{\'e}ric Cherbonnier and Pierre Colmez, \emph{Th{\'e}orie d'{I}wasawa
  des repr{\'e}sentations {$p$}-adiques d'un corps local}, J. Amer. Math. Soc.
 \textbf{12} (1999), no.~1, 241--268. \MR{1626273}
 
 \bibitem[FL82]{fontainelaffaille82}
 Jean-Marc Fontaine and Guy Laffaille, \emph{Construction de repr\'esentations
  {$p$}-adiques}, Ann. Sci. \'Ecole Norm. Sup. (4) \textbf{15} (1982), no.~4,
 547--608 (1983). \MR{707328 (85c:14028)}
 
 \bibitem[Gil79]{gillard79a}
 Roland Gillard, \emph{Unit\'es cyclotomiques, unit\'es semi-locales et {${\bf
    Z}_{l}$}}, Ann. Inst. Fourier (Grenoble) \textbf{29} (1979), no.~1, xiv,
 49--79. \MR{526777 (81e:12005a)}
 
 \bibitem[Kat04]{kato04}
 Kazuya Kato, \emph{{$P$}-adic {H}odge theory and values of zeta functions of
  modular forms}, Ast{\'e}risque \textbf{295} (2004), ix, 117--290,
 Cohomologies $p$-adiques et applications arithm{\'e}tiques. III. \MR{2104361}
 
 \bibitem[Kob03]{kobayashi03}
 Shinichi Kobayashi, \emph{Iwasawa theory for elliptic curves at supersingular
  primes}, Invent. Math. \textbf{152} (2003), no.~1, 1--36. \MR{1965358}
 
 \bibitem[Kur02]{kurihara02}
 Masato Kurihara, \emph{On the {T}ate {S}hafarevich groups over cyclotomic
  fields of an elliptic curve with supersingular reduction. {I}}, Invent. Math.
 \textbf{149} (2002), no.~1, 195--224. \MR{1914621}
 
 \bibitem[Lei11]{lei11}
 Antonio Lei, \emph{Iwasawa theory for modular forms at supersingular primes},
 Compos. Math. \textbf{147} (2011), no.~3, 803--838. \MR{2801401}
 
 \bibitem[Lei15]{lei15}
 \bysame, \emph{Bounds on the {T}amagawa numbers of a crystalline representation
  over towers of cyclotomic extensions}, Tohoku Math. J. (2015), to appear.
 
 \bibitem[LLZ10]{leiloefflerzerbes10}
 Antonio Lei, David Loeffler, and Sarah~Livia Zerbes, \emph{Wach modules and
  {I}wasawa theory for modular forms}, Asian J. Math. \textbf{14} (2010),
 no.~4, 475--528. \MR{2774276}
 
 \bibitem[LLZ11]{leiloefflerzerbes11}
 \bysame, \emph{Coleman maps and the $p$-adic regulator}, Algebra \& Number
 Theory \textbf{5} (2011), no.~8, 1095--1131. \MR{2948474}
 
 \bibitem[LZ12]{loefflerzerbes10}
 David Loeffler and Sarah~Livia Zerbes, \emph{Wach modules and critical slope
  {$p$}-adic {$L$}-functions}, J. reine angew. Math. \textbf{679} (2012),
 181--206. \MR{3065158}
 
 \bibitem[LZ14]{loefflerzerbes11}
 \bysame, \emph{Iwasawa theory and {$p$}-adic {$L$}-functions over
  {$\mathbf{Z}_p^2$}-extensions}, Int. J. Number Theory \textbf{10} (2014),
 no.~8, 2045--2096. \MR{3273476}
 
 \bibitem[Maz72]{mazur72}
 Barry Mazur, \emph{Rational points of abelian varieties with values in towers
  of number fields}, Invent. Math. \textbf{18} (1972), no.~3--4, 183--266.
 
 \bibitem[PR94]{perrinriou94}
 Bernadette Perrin-Riou, \emph{Th{\'e}orie d'{I}wasawa des repr{\'e}sentations
  {$p$}-adiques sur un corps local}, Invent. Math. \textbf{115} (1994), no.~1,
 81--161, With an appendix by Jean-Marc Fontaine. \MR{1248080}
 
 \bibitem[PR03]{perrinriou03}
 \bysame, \emph{Arithm{\'e}tique des courbes elliptiques {\`a} r{\'e}duction
  supersinguli{\`e}re en {$p$}}, Experiment. Math. \textbf{12} (2003), no.~2,
 155--186. \MR{2016704}
 
 \bibitem[Spr12]{sprung09}
 Florian E.~Ito Sprung, \emph{Iwasawa theory for elliptic curves at
  supersingular primes: a pair of main conjectures}, J. Number Theory
 \textbf{132} (2012), no.~7, 1483--1506. \MR{2903167}
 
 \bibitem[Spr13]{sprung12}
 \bysame, \emph{The \v{S}afarevi\v{c}-{T}ate group in cyclotomic {$\mathbb
   {Z}_p$}-extensions at supersingular primes}, J. Reine Angew. Math.
 \textbf{681} (2013), 199--218. \MR{3181495}
 
 \bibitem[Spr15]{sprung15}
 \bysame, \emph{On pairs of $p$-adic {$L$}-functions for weight two modular
  forms}, preprint, 2015.
 
\end{thebibliography}
\end{document}